\crefname{equation}{}{}
\newtheorem{lemma}{Lemma}[section]
\newtheorem{proposition}[lemma]{Proposition}
\newtheorem{theorem}[lemma]{Theorem}
\newtheorem{corollary}[lemma]{Corollary}
\newtheorem{setting}[lemma]{Setting}
\crefname{subsection}{Subsection}{Subsections}
\crefname{enumi}{item}{items}
\DeclareMathAlphabet{\mathscr}{LS1}{stixscr}{m}{n}
\newcommand{\defeq}{\curvearrowleft}
\newcommand{\stdNormal}{\mathbf{z}}
\newcommand{\vast}[2]{\left#2 \rule{0pt}{#1}\kern-.25ex\right.}
\newcommand{\sfN}{\mathsf{n}}
\newcommand{\R}{\mathbbm{R}}
\newcommand{\N}{\mathbbm{N}}
\newcommand{\Z}{\mathbbm{Z}}
\newcommand{\1}{\mathbbm{1}}
\renewcommand{\P}{\mathbbm{P}}
\newcommand{\E}{\mathbbm{E}}
\newcommand{\sqrtT}[1]{{\scriptstyle\sqrt{#1}}}
\newcommand{\funcH}[2]{{\left\vert\kern-0.25ex\left\vert\kern-0.25ex\left\vert #1     \right\vert\kern-0.25ex\right\vert\kern-0.25ex\right\vert}_{2,#2}}
\newcommand{\funcN}[2]{{\left\vert\kern-0.25ex\left\vert\kern-0.25ex\left\vert #1     \right\vert\kern-0.25ex\right\vert\kern-0.25ex\right\vert}_{1,#2}}
\newcommand{\lpspace}[1]{{#1}}
\NewDocumentCommand{\fabs}{sO{}m}{%
  {\IfBooleanTF{#1}
    {\fabsaux{\left|}{\right|}{#3}}
    {\fabsaux{#2|}{#2|}{#3}}}
}
\newcommand{\fabsaux}[3]{\mathpalette\fabsaux@i{{#1}{#2}{#3}}}
\newcommand{\fabsaux@i}[2]{\fabsaux@ii#1#2}
\newcommand{\fabsaux@ii}[4]{%
  \sbox\z@{$\m@th#1#2#4#3$}%
  \sbox\tw@{$\m@th\|$}%
  \mathopen{\hbox to\wd\tw@{\hss\vrule height \ht\z@ depth \dp\z@ width .3\wd\tw@\hss}}%
  \mkern-2mu #4 \mkern-2mu 
  \mathclose{\hbox to\wd\tw@{\hss\vrule height \ht\z@ depth \dp\z@ width .3\wd\tw@\hss}}%
}
\NewDocumentCommand{\ffabs}{som}{%
  {\IfBooleanTF{#1}
    {\fabsaux{\left|}{\right|}{#3}}
    {\IfNoValueTF{#2}
      {\fnabsaux{|}{|}{#3}}
      {\fabsaux{#2|}{#2|}{#3}}
    }
  }
}
\newcommand{\fnabsaux}[3]{\mathpalette\fnabsaux@i{{#1}{#2}{#3}}}
\newcommand{\fnabsaux@i}[2]{\fnabsaux@ii#1#2}
\newcommand{\fnabsaux@ii}[4]{%
  \sbox\z@{$\m@th#1#2#4#3$}%
  \sbox\tw@{$\m@th\|$}%
  \mathopen{\hbox to\wd\tw@{\hss\vrule height .8\ht\z@ depth .5\dp\z@ width .3\wd\tw@\hss}}%
  \mkern-2mu #4 \mkern-2mu 
  \mathclose{\hbox to\wd\tw@{\hss\vrule height .8\ht\z@ depth .5\dp\z@ width .3\wd\tw@\hss}}%
}
\DeclarePairedDelimiter{\lp}{\lVert}{\rVert}
\newcommand{\blp}[1]{\lp[\big]{#1}}
\newcommand{\bblp}[1]{\lp[\Big]{#1}}
\newcommand{\bbblp}[1]{\lp[\bigg]{#1}}
\newcommand{\unif}{\mathfrak{r}}
\title
{Overcoming the curse of dimensionality\\
 in the numerical approximation of\\
  backward stochastic differential equations}
\author{Martin Hutzenthaler$^{1}$, Arnulf Jentzen$^{2,3}$, \\
Thomas Kruse$^{4}$, and Tuan Anh Nguyen$^{5}$\bigskip\\
\small{$^1$ Faculty of Mathematics, University of Duisburg-Essen,}\\
\small{Essen, Germany; e-mail: \texttt{martin.hutzenthaler}\textcircled{\texttt{a}}\texttt{uni-due.de}}\\
\small{$^2$ Applied Mathematics M\"unster, Faculty of Mathematics and Computer Science,}\\ 
\small{University of M\"unster, M\"unster, Germany; e-mail: \texttt{ajentzen}\textcircled{\texttt{a}}\texttt{uni-muenster.de}}\\
\small{$^3$School of Data Science and Shenzhen Research Institute of Big Data,}\\
\small{The Chinese University of Hong Kong,}\\
\small{Shenzhen, China; e-mail: \texttt{ajentzen}\textcircled{\texttt{a}}\texttt{cuhk.edu.ch}}\\
\small{$^4$ Institute of Mathematics, University of Gie{\ss}en,}\\
\small{Gie{\ss}en, Germany; e-mail: \texttt{thomas.kruse}\textcircled{\texttt{a}}\texttt{math.uni-giessen.de}}\\
\small{$^5$ Faculty of Mathematics, University of Duisburg-Essen,}\\
\small{Essen, Germany; e-mail: \texttt{tuan.nguyen}\textcircled{\texttt{a}}\texttt{uni-due.de}}
}
\begin{document}

\maketitle
\begin{abstract}
Backward stochastic differential equations (BSDEs) belong nowadays to the most frequently studied equations in stochastic analysis and computational stochastics. 
BSDEs in applications are often nonlinear and high-dimensional. In nearly all cases such nonlinear high-dimensional BSDEs cannot be solved explicitly
and it has been and still is a very active topic of research to design and analyze numerical approximation methods to approximatively solve nonlinear high-dimensional BSDEs. Although there are a large number of research articles in the scientific literature which
analyze numerical approximation methods for nonlinear BSDEs, until today there has been no numerical approximation method in the scientific literature which has been proven to overcome the curse of dimensionality in the numerical approximation of nonlinear BSDEs in the sense that the number of computational operations of the numerical approximation method to approximatively compute one sample path of the BSDE solution grows at most polynomially in both the reciprocal $\nicefrac{ 1 }{ \varepsilon }$
of the prescribed approximation accuracy $\varepsilon \in (0,\infty)$ and the dimension $d\in \N=\{1,2,3,\ldots\}$ of the BSDE.
It is the key contribution of this article to overcome this obstacle by introducing a new Monte Carlo-type numerical approximation
method for high-dimensional BSDEs and by proving that this Monte Carlo-type numerical approximation method does indeed 
overcome the curse of dimensionality in the approximative computation 
of solution paths of BSDEs.
\end{abstract}

\newpage

\tableofcontents

\section{Introduction}\label{sec:intro}
\newcommand{\introkap}{\kappa}
\newcommand{\sigmaAlgebra}{\sigma}
\newcommand{\intOp}{\mathscr{L}}
\newcommand{\thetaBar}{\theta}
\newcommand{\F}{\mathbb{F}}
\renewcommand{\epsilon}{\varepsilon}
\newcommand{\FEU}[1]{\mathcal{C}_{#1}}
\newcommand{\FEY}[1]{{\mathfrak{C}}_{#1}}
\newcommand{\Yappr}{\mathscr{Y}}

Backward stochastic differential equations (BSDEs) have been introduced by Pardoux \& Peng in 1990 (see \cite{PardouxPeng1990}) and belong nowadays to the most frequently studied equations in stochastic analysis and computational stochastics. 
One central reason for the high interest in studying BSDEs is
their numerous occurrence in relevant real life problems.
In particular, BSDEs appear in the approximative valuation of financial products such as financial derivative contracts (see, e.g., \cite{ElKarouiPengQuenez1997,crepey2013financial,delong2013backward}), BSDEs arise in the solution of stochastic optimal control problems (see, e.g., \cite{touzi2012optimal, pham2009continuous, yong1999stochastic}), and BSDEs are strongly linked to nonlinear partial differential equations (PDEs) which themselves arise naturally in many applications (see, e.g., \cite{PardouxPeng1992,peng1991probabilistic,pardoux1999bsdes,pardoux2014stochastic}). 

BSDEs in applications are often nonlinear and high-dimensional where, e.g., in the approximative valuation of financial products the dimension of the BSDE essentially corresponds to the number of financial assets in the associated hedging portfolio, where, e.g., in stochastic optimal control problems the dimension of the BSDE is determined by the dimension of the state space of the stochastic control problem, and where, e.g., in the case of the connection of BSDEs and PDEs the dimension of the BSDE coincides with the dimension of the associated nonlinear PDE. 

In nearly all cases nonlinear high-dimensional BSDEs cannot be solved explicitly
and it has been and still is a very active topic of research to design and analyze numerical approximation methods to approximatively solve nonlinear high-dimensional BSDEs.
Standard numerical approximation methods for nonlinear BSDEs in the scientific literature suffer under the so-called curse of dimensionality 
(cf., e.g., Bellman \cite{Bellman}, Novak \& Wozniakowski \cite[Chapter~1]{NovakWozniakowski2008I}, and Novak \& Ritter \cite{MR1485004})
in the sense that the number of computational operations of the numerical approximation method to approximatively compute one sample path of the BSDE solution grows at least exponentially in the reciprocal $\nicefrac{1}{\varepsilon}$ of the prescribed approximation accuracy $\varepsilon \in (0,\infty)$ or the dimension $d\in \N=\{1,2,3,\ldots\}$ of the BSDE and it is a key objective in computational stochastics to design and analyze numerical approximation methods which overcome the curse of dimensionality in the numerical approximation of BSDEs. 

Since BSDEs have been introduced by Pardoux \& Peng in 1990 (see \cite{PardouxPeng1990}), 
a large number of numerical approximation methods for nonlinear BSDEs have been proposed and analyzed in the scientific literature.
In particular, we refer, for example, to
\cite{gobet2007error, BouchardTouzi2004,
      ChassagneuxRichou2015, 
      CrisanManolarakis2010,
      LionnetDosReisSzpruch2015, 	      
      Pham2015, 
      Turkedjiev2015,
      Zhang2004}
for numerical approximation methods for BSDEs based on one-step temporal 
discretizations of BSDEs, 
we refer, for example, to 
\cite{zhao2010stable, Chassagneux2014,
      ChassagneuxCrisan2014, teng2020multi, zhao2014new}
for numerical approximation methods for BSDEs based on multi-step temporal 
discretizations of BSDEs, 
we refer, for example, to 
\cite{bender2012least,
GobetLemorWarin2005,
GobetLemor2008Numerical, 
LemorGobetWarin2006, 
GobetTurkedjiev2016,
      GobetTurkedjiev2016MathComp,  
      RuijterOosterlee2015,
      GobetLopezSalasTurkedjiev2016
}
for numerical approximation methods for BSDEs based on suitable projections on function spaces,
we refer, for example, to 
\cite{CrisanManolarakis2012, 
      CrisanManolarakis2014,
      chassagneux2020cubature,
      de2015cubature
}
for cubature-based numerical approximation methods for BSDEs,
we refer, for example, to 
\cite{BouchardTouzi2004, CrisanManolarakisTouzi2010, hu2011malliavin}
for numerical approximation methods for BSDEs based on Malliavin calculus,
we refer, for example, to 
\cite{BallyPages2003,bally2003error, delarue2008interpolated,DelarueMenozzi2006}
for numerical approximation methods for BSDEs based on quantization algorithms,
we refer, for example, to 
\cite{ChangLiuXiong2016,le2019forward,le2018monte, le2017particle}
for numerical approximation methods for BSDEs based on density representations of particle systems,
we refer, for example, to 
\cite{ChassagneuxRichou2016,
richou2011numerical,richou2012markovian, imkeller2010results}
for numerical approximation methods for quadratic BSDEs, 
we refer, for example, to 
\cite{BenderDenk2007,bender2008time}
for numerical approximation methods for BSDEs based on Picard iterations and 
the least squares Monte Carlo method, 
we refer, for example, to 
\cite{GobetLabart2010, LabartLelong2013}
for numerical approximation methods for BSDEs based on Picard iterations and adaptive control variates,
we refer, for example, to 
\cite{FuZhaoZhou,
      ZhangGunzburgerZhao2013} 
for numerical approximation methods for BSDEs based on sparse grid approximations, 
we refer, for example, to 
\cite{BriandLabart2014, 
      GeissLabart2016}
for numerical approximation methods for BSDEs based on Wiener chaos expansions, 
we refer, for example, to 
\cite{zhao2006new,zhao2009error, zhao2012generalized}
for numerical approximation methods for BSDEs based on the theta-scheme,
we refer, for example, to 
\cite{cvitanic2005steepest}
for numerical approximation methods for BSDEs based on steepest descent algorithms,
we refer, for example, to 
\cite{chevance1997numerical, briand2019donsker, briand2001donsker, geiss2020random, geiss_labart_luoto_2020,MaProtterSanMartin2002}
for numerical approximation methods for BSDEs based on discrete time approximations of Brownian motions,
we refer, for example, to 
\cite{RuijterOosterlee2016, HuijskensRuijterOosterlee2016}
for numerical approximation methods for BSDEs based on Fourier expansions,
we refer, for example, to 
\cite{bender2017iterative, Bender2015Primal, bender2018pathwise}
for numerical approximation methods for BSDEs based on the primal-dual method,
we refer, for example, to 
\cite{Labordere2012,
      HenryLabordereOudjaneTanTouziWarin2016,
      LabordereTanTouzi2014,
      RasulovRaimoveMascagni2010,
      warin2017variations, agarwal2020branching,McKean1975,
 SkorohodBranchingDiffusion1964, 
  Watanabe1965Branching}
for numerical approximation methods for BSDEs based on
branching diffusion representations of PDEs,
we refer, for example, to 
\cite{DouglasMaProtter, MaProtterYong1994, MilsteinTretyakov2006,MilsteinTretyakov2007,MaYong1999}
for numerical approximation methods for BSDEs based on the four-step-scheme,
we refer, for example, to 
\cite{abbas2020conditional, abbas2020conditional2}
for numerical approximation methods for BSDEs based on conditional Monte Carlo learning for diffusion processes,
and we refer, for example, to \cite{chen2019deep,EHanJentzen2017CMStat,FujiiTakahashiTakahashi2017,HenryLabordere2017} and the references mentioned in the overview articles \cite{beck2020overview,han2020algorithms} for deep learning-based approximation methods for BSDEs.


Although there are a large number of research articles in the scientific literature which
analyze numerical approximation methods for nonlinear BSDEs, until today there has been no numerical approximation method in the scientific literature which has been proven to overcome the curse of dimensionality in the numerical approximation of nonlinear BSDEs in the sense that the number of computational operations of the numerical approximation method to approximatively compute one sample path of the BSDE solution grows at most polynomially in both the reciprocal $\nicefrac{ 1 }{ \varepsilon }$
of the prescribed approximation accuracy $\varepsilon \in (0,\infty)$ and the dimension $d\in \N=\{1,2,3,\ldots\}$ of the BSDE. This concept is also referred to as \emph{polynomial tractability} in the 
scientific literature (see, e.g., Novak \& Wozniakowski \cite[Definition~4.44]{NovakWozniakowski2008I}).

It is the key contribution of this article to overcome this obstacle by introducing a new Monte Carlo-type numerical approximation
method for high-dimensional BSDEs and by proving that this Monte Carlo-type numerical approximation method does indeed 
overcome the curse of dimensionality in the approximative computation 
of solution paths of BSDEs. Remarkably, this article even demonstrates that 
the introduced Monte Carlo-type numerical approximation method 
approximates solution paths of BSDEs with essentially 
the same computational complexity that is used by 
standard Monte Carlo methods for the approximative computation of integrals. More specifically, 
the main result of this article, \cref{t26} in \cref{sec:compcompl} below, 
proves that the introduced Monte Carlo-type numerical approximation method 
approximates solution paths of BSDEs with a computational effort 
which grows at most polynomially in the dimension $d\in \N$ of the driving Brownian motion 
and essentially at most quadratically in the reciprocal of the prescribed 
approximation accuracy.

The Monte Carlo-type numerical approximation method for BSDEs proposed in this article (see \eqref{eq:approx_intro} below) is based on full-history recursive multilevel Picard approximation methods \cite{EHutzenthalerJentzenKruse2016, HJKNW2018, EHutzenthalerJentzenKruse2017} (in the following we abbreviate \emph{full-history recursive multilevel Picard} by MLP) and on the multilevel approach in Heinrich \cite{h98,Heinrich01}. MLP approximations have previously been shown to overcome the curse of dimensionality in the case of a number of semilinear PDE problems (cf. \cite{EHutzenthalerJentzenKruse2016, HJKNW2018, becker2020numerical, EHutzenthalerJentzenKruse2017, hutzenthaler2019overcoming, giles20019generalised, HJKN20, HutzenthalerKruse2017, hjk2019overcoming, beck2019overcoming, beck2020overcoming,beck2020nonlinear}) and this is also the key ingredient in this article to overcome the curse of dimensionality in the numerical approximation of solution paths of BSDEs.

To briefly sketch the contribution of this article within this introductory section, we now present in the following result, \cref{h01} below, a special case of \cref{t26}, the main result of this article. Below \cref{h01} we explain in words the statement of \cref{h01} as well as the mathematical objects appearing in \cref{h01}.
\begin{theorem}\label{h01}
Let  $T, \delta \in (0,\infty)$, 
 $  \Theta = \bigcup_{ n \in \N }\! \Z^n$,
$f\in C^2( \R,\R)$, 
let 
$g_d\in C^1( \R^d,\R)$, $d\in\N$, satisfy $\sup_{d\in\N}\sup_{x=(x_1,x_2,\ldots, x_d)\in\R^d}\bigl(|f(x_1)|+|f'(x_1)|+|f''(x_1)|+|g_d(x)|+
\sum_{i=1}^{d}| \tfrac{\partial g_d}{\partial x_i}(x)|^2\bigr)<\infty$,
let
$(\Omega, \mathcal{F}, \P, (\F_t)_{t\in[0,T]})$ 
be a filtered probability space,
let $\unif^\theta\colon \Omega\to[0,1]$, $\theta\in \Theta$, be i.i.d.\ random variables,
assume for all $t\in (0,1)$ that $\P(\unif^{0}\le t)=t$,
let $W^{d,\theta}=(W^{d,\theta, 1},W^{d,\theta, 2},\ldots,W^{d,\theta, d})\colon [0,T]\times\Omega\to\R^d$, $d\in\N$, $\theta \in \Theta$, be 
independent standard 
$(\F_t)_{t\in[0,T]}$-Brownian motions,
assume that 
$
(\unif^\theta)_{\theta\in\Theta}$ and
$(W^{d,\theta})_{(d,\theta)\in\N\times \Theta}$
 are independent,
let
$ 
  {{U}}_{ n,M}^{d,\theta } \colon [0, T] \times \R^d \times \Omega \to \R
$, 
$d,M,n\in\N_0$, $\theta\in\Theta$, satisfy
for all 
$d,M \in \N$, $n\in \N_0$, $\theta\in\Theta $, 
$ t \in [0,T]$, $x\in\R^d $
 that 
{\small
\begin{equation}\label{eq:mlp_intro}
\begin{split}
&  {{U}}_{n,M}^{d,\theta}(t,x)
=(T-t)f(0)\1_{\N}(n)+
  \frac{\1_{\N}(n)}{M^n}
 \sum_{i=1}^{M^n} 
      {g}_d\bigl(x+W_{T-t}^{d,(\theta,0,-i)}\bigr)
 \\
& +
  \sum_{\ell=1}^{n-1}\Biggl[ \frac{(T-t)}{M^{n-\ell}}
    \sum_{i=1}^{M^{n-\ell}}
      \bigl(f\circ {{U}}_{\ell,M}^{d,(\theta,\ell,i)}-f\circ {{U}}_{\ell-1,M}^{d,(\theta,-\ell,i)}\bigr)
      \bigl(t+(T-t)\unif^{(\theta,\ell,i)},x+W_{(T-t)\unif^{(\theta,\ell,i)}}^{d,(\theta,\ell,i)}\bigr)
    \Biggr],
\end{split}
\end{equation}}%
let $\lfloor \cdot \rfloor_M \colon \R \to \R$, $ M \in \N $, and 
$\lceil \cdot \rceil_M \colon  \R \to \R$, $ M \in \N $, 
satisfy for all $M \in \N$, $t \in [0,T]$ that
$\lfloor t \rfloor_M = \max( ([0,t]\backslash \{T\}) \cap \{ 0, \frac{ T }{ M }, \frac{ 2T }{ M }, \ldots \} )$
and 
$\lceil t \rceil_M = \min(((t,\infty) \cup \{T\})\cap  \{ 0, \frac{ T }{ M }, \frac{ 2T }{ M }, \ldots \} )$,
let $\Yappr^{d,n,M}
\colon [0,T]\times\Omega\to\R $,
 $d,n,M\in\N$, 
satisfy  for all $d,n,M\in\N$, $t\in[0,T]$  that
\begin{align}\label{eq:approx_intro}
\Yappr^{d,n,M}_{t}
&= \sum_{\ell=0}^{n-1}\biggl[
\left[ \tfrac{ \lceil t \rceil_{M^{l+1}} - t }{ ( T / M^{ l + 1 } ) } \right]
U^{d,\ell}_{n-\ell,M}(\lfloor t \rfloor_{M^{l+1}}, W_{\lfloor t \rfloor_{M^{l+1}}}^{d,0})+
\left[\tfrac{ t-\lfloor t \rfloor_{M^{l+1}} }{ ( T / M^{ l + 1 } ) }\right]
U^{d,\ell}_{n-\ell,M}(\lceil t \rceil_{M^{l+1}}, W_{\lceil t \rceil_{M^{l+1}}}^{d,0})
\nonumber
\\
&\quad-\1_{\N}(\ell)\Bigl(
\left[ \tfrac{ \lceil t \rceil_{M^{l}} - t }{ ( T / M^{ l  } ) } \right]U^{d,\ell}_{n-\ell,M}(\lfloor t \rfloor_{M^{l}}, W^{d,0}_{\lfloor t \rfloor_{M^{l}}})+
\left[ \tfrac{ t-\lceil t \rceil_{M^{l}} }{ ( T / M^{ l  } ) } \right]
U^{d,\ell}_{n-\ell,M}(\lceil t \rceil_{M^{l}}, W^{d,0}_{\lceil t \rceil_{M^{l}}})
\Bigr)\biggr],
\end{align}
and for every 
$ d,n,M\in\N$
let
$\FEY{d,n,M}\in\N_0$ be the number of realizations of scalar random variables, the number of function evaluations of $f$, 
and the number of function evaluations of $g_d$
which are used to compute one realization of
$(\Yappr^{d,n,M}_{kT/M^n})_{k\in \{0,1,\ldots, M^n\}} $ (cf.\ \eqref{d01b} for a precise definition),
let $\mathbf{Y}^d=(Y^d,Z^{d,1},Z^{2,d},\ldots,Z^{d,d})\colon [0,T]\times\Omega\to \R^{d+1}
$, $d\in \N$, 
be $(\F_{t})_{t\in[0,T]}$-predictable stochastic processes,
assume for all $d\in \N$ that
$
\int_0^T  \E \bigl[|Y_s^d|+\textstyle\sum_{j=1}^{d}|Z_s^{d,j}|^2\bigr]\,ds<\infty
$,
and assume that for all $d\in \N$, $t\in[0,T]$ it holds $\P$-a.s. that
\begin{equation}\label{eq:bsde_intro}
Y^d_t=g_d(W^{d,0}_T)+\int_t^T f(Y^d_s)\,ds-\sum_{j=1}^{d}\int_t^T Z_s^{d,j}\, dW_s^{d,0,j}.
\end{equation}
Then
there exist 
$c\in \R$ and
$\sfN\colon \N \times (0,1]\to \N$
such that
 for all $d\in\N$, $\varepsilon\in  (0,1]$ it holds that
$\sup_{t\in [0,T]}
(\E[
|
\Yappr^{d,\sfN(d,\epsilon),\sfN(d,\epsilon)}_t-Y_t^d|^2])^{1/2}
\leq\epsilon
$ and
$
\FEY{d,\sfN(d,\epsilon),\sfN(d,\epsilon)}
\leq c
d^{c}\epsilon^{-(2+\delta)}.
$
\end{theorem}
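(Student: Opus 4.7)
The plan is to combine four ingredients: (i) a nonlinear Feynman--Kac reduction identifying $Y^d_t$ with an evaluation of a PDE solution $u^d$ along the driving Brownian motion; (ii) pointwise $L^2(\P)$-error bounds on $U^{d,\ell}_{n-\ell,M}-u^d$ from the existing MLP convergence theory; (iii) a Heinrich-style multilevel analysis of the pathwise linear-interpolation structure built into \eqref{eq:approx_intro}; and (iv) a direct induction on $n$ for the cost $\FEY{d,n,M}$, balanced against accuracy via an MLP-type parameter optimization $M=n=\sfN(d,\varepsilon)$.

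For (i), the assumed boundedness of $f$, $f'$, $f''$, $g_d$ and the uniform $L^2$-control of $\nabla g_d$ are enough to invoke Pardoux--Peng theory: there exists a unique $u^d\in C^{1,2}([0,T]\times\R^d,\R)$ solving $\partial_t u^d+\tfrac12\Delta u^d+f\circ u^d=0$ with terminal data $g_d$, and $Y^d_t=u^d(t,W^{d,0}_t)$ $\P$-a.s. Standard Bismut--Elworthy--Li type gradient estimates yield a Lipschitz constant for $u^d(t,\cdot)$ and a $1/2$-H\"older-in-$L^2(\P)$ modulus for $s\mapsto u^d(s,W^{d,0}_s)$ that are polynomial in $d$. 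For (ii), \eqref{eq:mlp_intro} is precisely the MLP scheme associated to this PDE, so the analysis of \cite{EHutzenthalerJentzenKruse2016, HJKNW2018, hjk2019overcoming, beck2020overcoming} supplies, for every $\ell$ and every spatial evaluation point used by \eqref{eq:approx_intro}, an $L^2(\P)$-bound on $U^{d,\ell}_{n-\ell,M}-u^d$ with constants polynomial in $d$ and a quantitative rate in $n-\ell$ and $M$ fast enough that the sub-exponential corrections can be absorbed into an arbitrarily small slack $\delta>0$ at the end.

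Step (iii) is the technical heart. Writing $\Pi^k_v(t)$ for the piecewise linear interpolation of $s\mapsto v(s,W^{d,0}_s)$ on the mesh $\{jT/M^k:j=0,\ldots,M^k\}$, formula \eqref{eq:approx_intro} reads $\mathscr{Y}^{d,n,M}_t=\Pi^1_{U^{d,0}_{n,M}}(t)+\sum_{\ell=1}^{n-1}\bigl(\Pi^{\ell+1}_{U^{d,\ell}_{n-\ell,M}}(t)-\Pi^\ell_{U^{d,\ell}_{n-\ell,M}}(t)\bigr)$. Inserting and telescoping the analogous expression for the exact $u^d$---which collapses to $\Pi^n_{u^d}(t)$---the error decomposes as
\begin{equation*}
Y^d_t-\mathscr{Y}^{d,n,M}_t=[u^d(t,W^{d,0}_t)-\Pi^n_{u^d}(t)]+\Pi^1_{u^d-U^{d,0}_{n,M}}(t)+\sum_{\ell=1}^{n-1}\bigl[\Pi^{\ell+1}_{u^d-U^{d,\ell}_{n-\ell,M}}(t)-\Pi^\ell_{u^d-U^{d,\ell}_{n-\ell,M}}(t)\bigr].
\end{equation*}
The first bracket is the linear-interpolation error of the exact function on the finest mesh, controlled by the $L^2$-H\"older modulus of Step (i); the remaining terms are controlled by combining (ii) with the observation that each interpolation operator is a convex combination of grid evaluations and therefore does not inflate grid-wise $L^2$-norms.

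For (iv), a direct induction on $n$ from \eqref{d01b} produces $\FEY{d,n,M}\leq cd^c(M+1)^{2n}n^c$, with the dominant contribution coming from the $\ell=0$ term in which $U^{d,0}_{n,M}$ is sampled at the grid points of the finest mesh. The standard MLP balancing---$M=n=\sfN(d,\varepsilon)$ chosen as the smallest integer for which the error bound of Step (iii) falls below $\varepsilon$---then forces $\FEY{d,\sfN,\sfN}\leq cd^c\varepsilon^{-(2+\delta)}$, the slack $\delta>0$ absorbing the MLP sub-exponential factors by letting $n$ grow slightly beyond $2\log_M(1/\varepsilon)$. The principal obstacle is Step (iii): a naive triangle bound on each telescope summand, treating the two interpolants independently, is dominated by the finest-level MLP pointwise error and does not yield a rate fast enough to match the claimed $\varepsilon^{-(2+\delta)}$ complexity. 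To recover the correct rate one must exploit the telescoping cancellation together with the $L^2$-regularity of the reference $u^d$ inherited via Step (i), and establish a Brownian-bridge-type decoupling between the MLP randomness and the Brownian-path increments across the interpolation meshes; this is the essence of the Heinrich multilevel trick, adapted here to the Brownian-path-dependent, fully nonlinear MLP setting.
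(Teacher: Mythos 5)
Your architecture coincides with the paper's: the identification $Y^d_t=u^d(t,W^{d,0}_t)$ (which the paper obtains from a stochastic fixed-point equation, \cref{k16}, plus the martingale representation theorem, rather than from a classical $C^{1,2}$ PDE solution with Bismut--Elworthy--Li gradient bounds --- a route that the mere $C^1$ regularity of $g_d$ does not readily support and that the paper deliberately avoids), the telescoping decomposition of $Y^d_t-\Yappr^{d,n,M}_t$ into interpolation differences of the level errors $e_\ell:=U^{d,\ell}_{n-\ell,M}-u^d$ (this is exactly \cref{n002}), the cost recursion $\FEY{d,n,M}\le \alpha(n+2)(5M)^{n+1}$ with $\alpha$ of order $d$, and the balancing $M=n=\sfN(d,\epsilon)$ with the $\delta$-slack absorbing the sub-exponential MLP factors.

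However, there is a genuine gap precisely at the point you yourself flag as the principal obstacle, and the resolution you gesture at is not the one that works. To make the telescope summand at level $\ell$ small of order $M^{-\ell/2}$, the paper does not use any Brownian-bridge-type decoupling between the MLP randomness and the path increments; it uses the identity $\mathscr{L}_\ell=\mathscr{L}_\ell\circ\mathscr{L}_{\ell+1}$ together with the interpolation error bound of \cref{n001} and the contraction property of \cref{n002b}, which reduces the summand to $(T/M^\ell)^{1/2}$ times the $\nicefrac{1}{2}$-H\"older seminorm (in $L^2(\P)$, jointly in time and space) of the \emph{error field} $e_\ell$ itself. The indispensable ingredient is therefore an MLP error estimate in this H\"older seminorm, namely a bound on $\sup\bigl(\E[|(e_\ell(s,x)-e_\ell(t,y))|^2]\bigr)^{1/2}/(|s-t|^{1/2}+\lVert x-y\rVert)$ decaying like $e^{M/2}M^{-(n-\ell)/2}(50e^{2LT})^{n-\ell+1}$; this is item (iv) of \cref{m02} and constitutes the genuinely new analytic content of the paper. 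It is proved by running the MLP Gronwall-type recursion simultaneously on the sup-norm and on the H\"older seminorm of the error field, and it requires the second-order difference condition \eqref{m11b} on $f$ (a bound on $|[f(v_1)-f(w_1)]-[f(v_2)-f(w_2)]|$), which is exactly why $f\in C^2$ is assumed and is supplied by \cref{f01}. Your proposal neither states nor proves such a seminorm estimate; without it each telescope summand can only be bounded by the pointwise finest-level MLP error, which, as you correctly observe, does not yield the claimed $\epsilon^{-(2+\delta)}$ complexity.
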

\cref{h01} is an immediate consequence from \cref{cor:sec4} in \cref{sec:compcompl} below. 
\cref{cor:sec4}, in turn, follows from \cref{t26}, which is the main result of this article. 
In the following we add some comments on the mathematical objects appearing in \cref{h01} above. 

In \eqref{eq:bsde_intro} in \cref{h01} we specify the BSDEs whose solution processes 
we intend to approximate in \cref{h01}. 
The strictly positive real number $T \in (0,\infty)$ in the first line of \cref{h01} describes the time horizon of the BSDEs in \eqref{eq:bsde_intro}. 
The function $f \colon \R \to \R$ in the first line of \cref{h01}
specifies the driver (the nonlinearity) of the BSDEs in \eqref{eq:bsde_intro}. 
The quadrupel $(\Omega, \mathcal{F}, \P, (\F_t)_{t\in[0,T]})$ in the third line of \cref{h01}
is the filtered probability space on which the BSDEs in \eqref{eq:bsde_intro} are formulated. 
In \cref{h01} we do not assume that the filtererd probability space 
$(\Omega, \mathcal{F}, \P, (\F_t)_{t\in[0,T]})$ satisfies the \emph{usual conditions} in the sense that 
for all $t \in [0,T)$ it holds that $\{ A\in \mathcal F\colon \P(A)=0 \} \subseteq \F_t 
= \cap_{ s \in (t,T] } \F_s$.
The $(\F_{t})_{t\in[0,T]}$-predictable stochastic processes
$Y^d\colon [0,T]\times\Omega\to \R
$, $d\in \N$, 
in the last but fifth line 
of \cref{h01} are the solution processes of the BSDEs in \eqref{eq:bsde_intro}. 
 
In \eqref{eq:mlp_intro}--\eqref{eq:approx_intro} in \cref{h01} we specify 
the Monte Carlo-type approximation algorithm which we propose 
to approximate the solution processes of the BSDEs in \eqref{eq:bsde_intro}. 
To formulate the proposed Monte Carlo-type approximation algorithm 
in \eqref{eq:mlp_intro}--\eqref{eq:approx_intro} we need, roughly speaking, sufficiently many independent 
random quantities which are indexed over a sufficiently large index set. 
This sufficiently large index set is provided through 
the set  $  \Theta = \bigcup_{ n \in \N }\! \Z^n$ in the first line of \cref{h01}. 
The i.i.d.\ random variables $\unif^\theta\colon \Omega\to[0,1]$, $\theta\in \Theta$,
in the third line of \cref{h01}
and the independent standard $(\F_{t})_{t\in[0,T]}$-Brownian motions $W^{d,\theta}\colon [0,T]\times\Omega\to\R^d$, $d\in\N$, $\theta \in \Theta$, in the fourth line 
of \cref{h01} provide the random quantities which we employ to formulate 
the BSDEs in \eqref{eq:bsde_intro} and the proposed Monte Carlo-type approximation 
algorithm in \eqref{eq:mlp_intro}--\eqref{eq:approx_intro}. 

More formally, observe that the independent standard Brownian motions  
$W^{ d, 0 }\colon [0,T]\times\Omega\to\R^d$, $d\in\N$, 
in the fourth line of \cref{h01}  are the driving standard Brownian motions 
in the BSDEs in \eqref{eq:bsde_intro} 
and observe that 
the i.i.d.\ random variables $\unif^\theta\colon \Omega\to[0,1]$, $\theta\in \Theta$,
in the third line of \cref{h01}  
and the independent standard Brownian motions $W^{d,\theta}\colon [0,T]\times\Omega\to\R^d$, $d\in\N$, $\theta \in (\Theta \backslash \{0\})$,
in the fourth line of \cref{h01} are the random quantities 
which we use as random input sources to formulate  
the proposed Monte Carlo-type approximation algorithm in \eqref{eq:mlp_intro}--\eqref{eq:approx_intro}. 
Note that the assumption in third line of \cref{h01} that for all
$t\in (0,1)$ it holds that $\P(\unif^{0}\le t)=t$
ensures that for all $\theta \in \Theta$ it holds that 
$\unif^{ \theta }$ is an on $[0,1]$ continuous uniformly distributed random variable. 

The functions $g_d \colon \R^d \to \R$, $d \in \N$, 
in the first line of \cref{h01} and the independent standard Brownian motions 
$W^{ d, 0 }\colon [0,T]\times\Omega\to\R^d$, $d\in\N$, 
in the fourth line of \cref{h01} determine the terminal conditions of 
the BSDEs in \eqref{eq:bsde_intro}. More precisely, note that \eqref{eq:bsde_intro} in \cref{h01}
ensures that for all $d \in \N$ it holds $\P$-a.s.\ that 
$Y^d_T = g_d( W^{ d,0}_T )$. 
In \cref{h01} we assume that the driver $f \colon \R \to \R$ 
in the first line of \cref{h01}
and the functions $g_d \colon \R^d \to \R$, $d \in \N$, in the first line of 
\cref{h01} satisfy some regularity hypotheses. 
More formally, observe that the assumption $\sup_{d\in\N}\sup_{x=(x_1,x_2,\ldots, x_d)\in\R^d}\bigl(|f(x_1)|+|f'(x_1)|+|f''(x_1)|+|g_d(x)|+
\sum_{i=1}^{d}| \tfrac{\partial g_d}{\partial x_i}(x)|^2\bigr)<\infty$ in the second line 
of \cref{h01} assures that there exists a real number $\introkap \in \R$
such that for all $d \in \N$, $v \in \R$, $x = (x_1, x_2, \ldots, x_d) \in \R^d$
it holds that
$|f(v)| \leq \introkap$, $| f'(v) | \leq \introkap$, $|f''(v)| \leq \introkap$, 
$|g_d(x)|\leq \introkap$, and $\sum_{i=1}^{d}| \tfrac{\partial g_d}{\partial x_i}(x)|^2\le \introkap$.

The numbers $\FEY{d,n,M}\in\N_0$, $d, n, m \in \N$, in the first line below \eqref{eq:approx_intro} in \cref{h01} model the computational cost of the Monte Carlo-type approximation algorithm in \eqref{eq:mlp_intro}--\eqref{eq:approx_intro}.
More specifically, for every $d, n, M \in \N$ we have that 
$\FEY{d,n,M}$ specifies the sum of the number of realizations of one-dimensional random variables, of the number of 
function evaluations of $f \colon \R \to \R$, and of the number 
of function evaluations of $g_d \colon \R^d \to \R$ which are used to compute one realization of  
$(\Yappr^{d,n,M}_{kT/M^n})_{k\in \{0,1,\ldots, M^n\}} $ (cf.\ \eqref{d01b} for a precise definition).
Observe that \eqref{eq:approx_intro} in \cref{h01} ensures that for every $d, n, M \in \N$
we have that 
$(\Yappr^{d,n,M}_t)_{ t \in [0,T] }$ is the piecewise affine linear interpolation associated to 
$(\Yappr^{d,n,M}_{kT/M^n})_{k\in \{0,1,\ldots, M^n\}} $
 in sense that for all $k \in \{ 1, 2, ..., M^n \}$, $t \in [ \frac{(k-1)T}{M^n}, \frac{ k T }{ M^n } ]$ it holds that
$\Yappr^{d,n,M}_t = \frac{M^n}{T}\bigl[ ( \frac{kT}{M^n}-t ) \Yappr^{d,n,M}_{(k-1)T/M^n} + ( t-  \frac{(k-1)T}{M^n} ) \Yappr^{d,n,M}_{ k T / M^n }\bigr]$. 

\cref{h01} proves that the solution processes 
$Y^d\colon [0,T]\times\Omega\to \R
$, $d\in \N$, of the BSDEs in \eqref{eq:bsde_intro}
can be approximated by means 
of the Monte Carlo-type approximation algorithm 
in \eqref{eq:mlp_intro}--\eqref{eq:approx_intro} with a computational cost 
which grows at most polynomially in the dimension $d \in \N$ of the BSDE 
and up to an arbitrarily small polynomial order at most quadratically 
in the reciprocal $ \nicefrac{ 1 }{ \varepsilon } $ 
of the prescribed approximation accuracy $\varepsilon > 0$. 
The arbitrarily small polynomial order is described through 
the real number $\delta \in (0,\infty)$ in the first line of \cref{h01}.

In the following we also add some comments on shortcomings and possible generalizations of \cref{h01}. In particular, we observe that the driver $f \colon \R \to \R$ in the BSDEs in \eqref{eq:bsde_intro} does only depend on the solution processes $Y^d \colon [0,T]\times \Omega \to \R$, $d \in \N$, but not on the time variable $s \in [0,T]$, not on the driving Brownian motions 
$W^{ d, 0 } \colon[0,T]\times \Omega \to \R^d$, $d \in \N$, and also not on
the stochastic processes $Z^{ d, j } \colon [0,T]\times \Omega \to \R$, $j\in \{1,2,\ldots, d\}$, $d\in \N$.
However, in the more general result in \cref{t26} in \cref{sec:compltwoplus} below the drivers of the BSDEs under consideration 
do additionally also depend on the time variable $s \in [0,T]$ and 
on the driving Bronwnian motions $W^{ d, 0 } \colon[0,T]\times \Omega \to \R^d$, $d \in \N$. We refer to \eqref{t29b} in \cref{t26} below for details. The dependence 
of the drivers of the BSDEs under considerations on 
the stochastic processes $Z^{ d, j } \colon [0,T]\times \Omega \to \R$, $j\in \{1,2,\ldots, d\}$, $d\in \N$, is not covered within this article 
and the numerical approximation of 
the stochastic processes $Z^{ d, j } \colon [0,T]\times \Omega \to \R$, $j\in \{1,2,\ldots, d\}$, $d\in \N$, is also not covered within this article 
but the arguments revealed in this article together
with the arguments in the article \cite{hjk2019overcoming} allow also to overcome the curse of dimensionality in these more general cases of BSDEs. 

In \cref{h01} we also use a rather restrictive regularity hypothesis on the driver $f \colon \R \to \R$ and the functions $g_d \colon \R^d \to \R$, $d\in \N$, in the sense that there exists $\introkap \in \R$ such that for all $d \in \N$, $v \in \R$, $x = (x_1, x_2, \ldots, x_d) \in \R^d$
it holds that
$|f(v)| \leq \introkap$, $| f'(v) | \leq \introkap$, $|f''(v)| \leq \introkap$, 
$|g_d(x)|\leq \introkap$, and $\sum_{i=1}^{d}| \tfrac{\partial g_d}{\partial x_i}(x)|^2\le \introkap$. In the more general result in \cref{t26} in \cref{sec:compltwoplus} below this hypothesis is replaced by suitable more general Lipschitz-type assumptions. We refer to \eqref{t02b}--\eqref{m11b} in \cref{t26} below for details.

The remainder of this article is organized as follows. In \cref{sec:2} below we establish upper bounds for a generalized norm of the difference between a vector space valued process and appropriate multi-grid approximations for this process. A key aspect in the derivation of the Monte Carlo-type approximation algorithm in \eqref{eq:mlp_intro}--\eqref{eq:approx_intro} in \cref{h01} is, roughly speaking, to reformulate the solutions of the BSDEs in \eqref{eq:bsde_intro} as solutions of appropriate stochastic fixed-point equations (SFPEs) associated to the BSDEs in  \eqref{eq:bsde_intro} and in \cref{sec:3} below we establish existence, uniqueness, and H\"older continuity properties for solutions of precisely such SFPEs. In \cref{sec:compltwoplus} below we establish upper bounds for appropriate H\"older seminorms of the difference between the solutions of such SFPEs and suitable MLP approximations for such SFPEs. In \cref{sec:compcompl} below we combine the findings from Sections \ref{sec:2} and \ref{sec:compltwoplus} to provide a computational complexity analysis for the Monte Carlo-type approximation algorithm in \eqref{eq:mlp_intro}--\eqref{eq:approx_intro} and, thereby, we also prove \cref{h01} above.


\section{Error analysis for multi-grid approximations}\label{sec:2}

A central aspect in the derivation of the Monte Carlo-type approximation algorithm for BSDEs in \eqref{eq:mlp_intro}--\eqref{eq:approx_intro} in \cref{h01} in \cref{sec:intro} above is, roughly speaking, to approximate the exact solution of the BSDE under consideration by means of appropriate multi-grid approximations on coarser and coarser time grids and, then, to exploit suitable uniform temporal regularity properties for the employed multi-grid approximations. 

In \cref{n002} in this section we formulate this approach in an abstract setting and in \cref{n002} we also establish explicit upper bounds for a generalized error norm of the difference between a vector space valued process (which we think of as the solution process of the considered BSDE) and appropriate multi-grid approximations for this process. 

Our approach is based on the multilevel method in the articles Heinrich
 \cite{h98,Heinrich01}. In these references Heinrich proposed and formulated the multilevel method in the context of Monte Carlo approximations of certain parameter-dependent integrals (see also Heinrich \& Sindambiwe~\cite{heinrich1999monte}).

Our proof of \cref{n002} employs the essentially well-known error estimate for piecewise affine linear interpolation functions in \cref{n001} and the essentially well-known H\"older continuity result for piecewise affine linear interpolation functions in \cref{n002b}. \cref{n001} is, e.g., a slight extension of Cox et al.~\cite[Lemma 2.2]{cox2016convergence} and \cref{n002b} is, e.g., a slight extension of Cox et al.~\cite[Lemma 2.5]{cox2016convergence}.

\begin{lemma}[]\label{n001}
\renewcommand{\epsilon}{\varepsilon}
\newcommand{\restricted}{\upharpoonright}
\renewcommand{\intOp}{\mathscr{L}}
\newcommand{\barY}{\mathscr{Y}}
\newcommand{\norm}[1]{{\left\lVert #1\right\rVert}}
\newcommand{\norms}[1]{{\bigl\lVert #1 \bigr\rVert}}
\newcommand{\Xlem}{X}
Let $V$ be an $\R$-vector space,
let $\norm{\cdot}\colon V\to[0,\infty]$ satisfy for all
$v,w\in V$, $\mathscr{v},\mathscr{w}\in\R$ 
with $\norm{v}+\norm{w}<\infty$
that
$\norm{\mathscr{v} v+\mathscr{w} w}\leq|\mathscr{v}| \norm{v}+|\mathscr{w}|\norm{w}$,
let $T, \alpha \in(0,\infty)$, 
$m\in\N$,
$\tau_0, \tau_1,\ldots, \tau_m\in \R$
satisfy $0 = \tau_{ 0 } 
< \tau_{ 1 } < \ldots < \tau_{  m } = T$, and let
$x=(x_t)_{t\in [0,T]}\colon [0,T] \to V$ and 
$\Xlem=(\Xlem_t)_{t\in [0,T]} \colon [0,T]\to V$
satisfy for all $k \in \{ 1, 2, \ldots, m \}$, $t \in [ \tau_{ k-1 }, \tau_{ k } ] $
that
$
\Xlem_t= ({\tau_{k}-\tau_{k-1}})^{-1}
[(\tau_{k}-t)x_{\tau_{k-1}}+(t-\tau_{k-1})x_{\tau_k}]
$.
Then
\begin{equation}
\begin{split}
&\sup_{t\in [0,T]} \norms{ \Xlem_t-x_t}\le 2^{-\!\min\{3,\alpha\}} \left[\max_{ k \in \{ 1, 2, \ldots, m \}} \lvert \tau_{ k } - \tau_{ k-1 } \rvert ^\alpha\right] \left[
\sup_{r,s\in [0,T], \, r\neq s} \frac{\norm{x_r-x_s}}{|r-s|^\alpha}\right].
\label{n06}
\end{split}
\end{equation}
\end{lemma}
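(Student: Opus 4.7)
My plan is to reduce the bound to a pointwise estimate on each sub-interval $[\tau_{k-1},\tau_k]$. First, if the H\"older seminorm $L := \sup_{r\neq s}\|x_r - x_s\|/|r-s|^\alpha$ is infinite, then the right-hand side of \eqref{n06} is infinite and the inequality is vacuous, so I may assume $L<\infty$. For each $t \in [\tau_{k-1},\tau_k]$, setting $h_k := \tau_k - \tau_{k-1}$, the definition of $X_t$ yields the identity
\[
X_t - x_t = \tfrac{\tau_k - t}{h_k}(x_{\tau_{k-1}} - x_t) + \tfrac{t-\tau_{k-1}}{h_k}(x_{\tau_k} - x_t).
\]
Applying the assumed quasi-triangle inequality for $\|\cdot\|$ (the two summands on the right have finite norm thanks to the H\"older hypothesis), followed by the H\"older bound, produces
\[
\|X_t - x_t\| \leq \frac{L}{h_k}\Big[(\tau_k-t)(t-\tau_{k-1})^\alpha + (t-\tau_{k-1})(\tau_k-t)^\alpha\Big] = \frac{L}{h_k}\, f(u),
\]
where $u := t - \tau_{k-1} \in [0,h_k]$ and $f(u) := (h_k-u)u^\alpha + u(h_k - u)^\alpha$. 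It then suffices to bound $f(u)/h_k$ by $2^{-\min\{3,\alpha\}} h_k^\alpha$.

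I would next analyze $f$ on $[0,h_k]$. It is symmetric about $u = h_k/2$, vanishes at the endpoints, and its second derivative at the midpoint equals $f''(h_k/2) = 2\alpha(h_k/2)^{\alpha-1}(\alpha-3)$. For $\alpha \in (0,3]$ the midpoint is therefore a local maximum, and a short examination of the sign of $f'$ (using the symmetry and the fact that $f$ has no further interior critical points on $(0,h_k/2]$) shows that it is in fact the global maximum on $[0,h_k]$. Since $f(h_k/2) = 2\cdot (h_k/2)\cdot (h_k/2)^\alpha = 2^{-\alpha}h_k^{\alpha+1}$, this yields $\|X_t - x_t\| \leq 2^{-\alpha} L h_k^\alpha$, which agrees with $2^{-\min\{3,\alpha\}}L h_k^\alpha$ in this regime.

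For $\alpha > 3$ the midpoint is instead a local \emph{minimum} of $f$, so direct midpoint evaluation no longer suffices. Here I would exploit $0\leq u, h_k-u \leq h_k$ to monotonically reduce the exponent, using $u^\alpha \leq h_k^{\alpha-3} u^3$ and $(h_k-u)^\alpha \leq h_k^{\alpha-3}(h_k-u)^3$ to obtain
\[
f(u) \leq h_k^{\alpha - 3}\, u(h_k - u)\big[u^2 + (h_k-u)^2\big].
\]
A direct differentiation of the right-hand side in $u$ gives a derivative proportional to $-(2u-h_k)^3$, so its global maximum on $[0,h_k]$ is attained at $u = h_k/2$ and equals $h_k^{\alpha+1}/8$. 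Consequently $\|X_t - x_t\| \leq 2^{-3} L h_k^\alpha$, matching $2^{-\min\{3,\alpha\}}$ in this regime. Finally, bounding $h_k \leq \max_{j\in\{1,\ldots,m\}}(\tau_j - \tau_{j-1})$ and taking $\sup_{t\in[0,T]}$ produces \eqref{n06}.

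The main (minor) obstacle is precisely the qualitative change at $\alpha = 3$: the naive midpoint estimate, which would yield the sharper constant $2^{-\alpha}$, breaks down for $\alpha > 3$ because the midpoint ceases to be a maximum of $f$. One therefore has to pass through the monotone reduction $u^\alpha \leq h_k^{\alpha-3}u^3$ in order to recover the uniform constant $2^{-3}$ in the high-regularity regime; this is exactly the reason that the statement of the lemma contains $2^{-\min\{3,\alpha\}}$ rather than $2^{-\alpha}$.
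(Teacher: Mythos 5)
Your overall strategy coincides with the paper's: the same identity $X_t-x_t=\tfrac{\tau_k-t}{h_k}(x_{\tau_{k-1}}-x_t)+\tfrac{t-\tau_{k-1}}{h_k}(x_{\tau_k}-x_t)$, the same application of the assumed quasi-triangle inequality and of the H\"older seminorm, and the same reduction to the scalar optimization $\sup_{z\in[0,1]}\bigl(z(1-z)^\alpha+(1-z)z^\alpha\bigr)\le 2^{-\min\{3,\alpha\}}$. Your treatment of the regime $\alpha>3$ (the monotone reduction $u^\alpha\le h_k^{\alpha-3}u^3$ followed by the exact maximization of the resulting cubic, whose derivative is $(h_k-2u)^3$ up to a positive factor) is correct and is essentially the paper's third case.

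The one genuine soft spot is the regime $\alpha\in(0,3]$. There you need the \emph{global} maximum of $f(u)=(h_k-u)u^\alpha+u(h_k-u)^\alpha$ to sit at the midpoint, and since $f(h_k/2)=2^{-\alpha}h_k^{\alpha+1}$ is exactly the constant you are claiming, there is no slack to absorb a weaker argument. The second-derivative computation $f''(h_k/2)=2\alpha(h_k/2)^{\alpha-1}(\alpha-3)\le 0$ only certifies a \emph{local} maximum, and the statement that ``$f$ has no further interior critical points on $(0,h_k/2]$'' is asserted rather than proved; writing out $f'(z)=z^{\alpha-1}[\alpha-(\alpha+1)z]+(1-z)^{\alpha-1}[1-(\alpha+1)z]$ (normalizing $h_k=1$) shows that on the subinterval $z\in(\tfrac{1}{\alpha+1},\tfrac12)$ the two terms have opposite signs, so the positivity of $f'$ there is not a one-line sign check. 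The claim is true, but it needs an actual argument. The paper avoids this calculus entirely by two applications of Jensen's inequality: for $\alpha\in(0,1]$, $z(1-z)^\alpha+(1-z)z^\alpha\le(2z(1-z))^\alpha\le 2^{-\alpha}$ by concavity of $y\mapsto y^\alpha$, and for $\alpha\in(1,2]$, $2z(1-z)\cdot\tfrac{(1-z)^{\alpha-1}+z^{\alpha-1}}{2}\le\tfrac12\cdot 2^{-(\alpha-1)}$ by concavity of $y\mapsto y^{\alpha-1}$; the case $\alpha\in(2,3]$ is then handled by the same convexity reduction you use for $\alpha>3$. You should either adopt one of these convexity arguments or supply the missing monotonicity proof for $f'$ on $(0,h_k/2)$.
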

\begin{proof}[Proof of \cref{n001}]
\renewcommand{\epsilon}{\varepsilon}
\newcommand{\barY}{\mathscr{Y}}
\newcommand{\norm}[1]{{\left\lVert #1\right\rVert}}
\newcommand{\norms}[1]{{\bigl\lVert #1 \bigr\rVert}}
\newcommand{\vlem}{\mathfrak{y}}
\newcommand{\Xlem}{X}
Throughout this proof assume without loss of generality that for all $s,t\in [0,T]$ with $s\neq t$ it holds that $\norm{x_s-x_t}<\infty$.
Note that 
for all $k \in \{ 1, 2, \ldots, m \}$, $t \in ( \tau_{ k-1 }, \tau_{ k } ) $ it holds 
that
\begin{equation}
\begin{split}
\Xlem_t-x_t&=
\left[
\frac{(\tau_{k}-t)x_{\tau_{k-1}}+(t-\tau_{k-1})x_{\tau_{k}}}{{\tau_{k}-\tau_{k-1}}}
\right]
-x_t\\
&= 
\frac{(\tau_{k}-t)(x_{\tau_{k-1}}-x_t)+(t-\tau_{k-1})(x_{\tau_{k}}-x_t)}{{\tau_{k}-\tau_{k-1}}}
\\
&=
\left[\frac{(t-\tau_{k-1})(\tau_{k}-t)^\alpha}{\tau_{k}-\tau_{k-1}} \right] \left[ \frac{x_{\tau_{k}}-x_t}{(\tau_{k}-t)^\alpha} \right]
-
\left[\frac{(\tau_{k}-t)(t-\tau_{k-1})^\alpha}{\tau_{k}-\tau_{k-1}} \right] \left[ \frac{x_t-x_{\tau_{k-1}}}{(t-\tau_{k-1})^\alpha} \right].
\end{split}
\end{equation}
The assumption that for all
$v,w\in V$, $\mathscr{v},\mathscr{w}\in\R$ 
with $\norm{v}+\norm{w}<\infty$ it holds
that
$\norm{\mathscr{v} v+\mathscr{w} w}\leq|\mathscr{v}| \norm{v}+|\mathscr{w}|\norm{w}$ hence ensures that  
for all $k \in \{ 1, 2, \ldots, m \}$, $t \in [ \tau_{ k-1 }, \tau_{ k } ] $ it holds 
that
\begin{align}
\norms{\Xlem_t-x_t}
&\leq 
\left(
\left[\frac{(t-\tau_{k-1})(\tau_{k}-t)^\alpha}{\tau_{k}-\tau_{k-1}} \right] 
+
\left[\frac{(\tau_{k}-t)(t-\tau_{k-1})^\alpha}{\tau_{k}-\tau_{k-1}} \right]
\right)
 \left[
\sup_{r,s\in [0,T], \, r\neq s} \frac{\norm{x_r-x_s}}{|r-s|^\alpha}\right] \nonumber
\\
&=
\left(
\left[\frac{t-\tau_{k-1}}{\tau_{k}-\tau_{k-1}}\right]
\left[\frac{
\tau_{k}-t}{\tau_{k}-\tau_{k-1}} \right]^\alpha 
+
\left[\frac{\tau_{k}-t}{\tau_{k}-\tau_{k-1}}\right]
\left[\frac{t-\tau_{k-1}}{\tau_{k}-\tau_{k-1}} \right]^\alpha
\right)\nonumber
\\\label{n06c}
&\quad \cdot [\tau_{k}-\tau_{k-1}]^\alpha
 \left[
\sup_{r,s\in [0,T], \, r\neq s} \frac{\norm{x_r-x_s}}{|r-s|^\alpha}\right]
\\
&=
\left(
\left[\frac{t-\tau_{k-1}}{\tau_{k}-\tau_{k-1}}\right]
\left[1-\left(\frac{t-\tau_{k-1}}{\tau_{k}-\tau_{k-1}}\right)\right]^\alpha 
+
\left[1-\left(\frac{t-\tau_{k-1}}{\tau_{k}-\tau_{k-1}}\right)\right]
\left[\frac{t-\tau_{k-1}}{\tau_{k}-\tau_{k-1}} \right]^\alpha
\right)\nonumber
\\
&\quad \cdot
[\tau_{k}-\tau_{k-1}]^\alpha
 \left[
\sup_{r,s\in [0,T], \, r\neq s} \frac{\norm{x_r-x_s}}{|r-s|^\alpha}\right]\nonumber
\\
&\le \left[\max_{ l \in \{ 1, 2, \ldots, m \}} | \tau_{ l } - \tau_{ l-1 } |^\alpha \right]  \left[ \sup_{z\in [0,1]}(z(1-z)^\alpha+(1-z)z^\alpha) \right]\left[
\sup_{r,s\in [0,T], \, r\neq s} \frac{\norm{x_r-x_s}}{|r-s|^\alpha}\right].\nonumber
\end{align}
Next observe that the fact that for all $z\in [0,1]$ it holds that $z(1-z)\le 2^{-2}$ and Jensen's inequality imply that for all $z\in [0,1]$ it holds that
\begin{equation}\label{n06d} 
\begin{split}
&\1_{(0,1]}(\alpha)\bigl(z(1-z)^\alpha+(1-z)z^\alpha\bigr)\le \1_{(0,1]}(\alpha)\bigl(z(1-z)+(1-z)z\bigr)^\alpha\\
&=\1_{(0,1]}(\alpha)2^\alpha \bigl(z(1-z)\bigr)^\alpha
\le \1_{(0,1]}(\alpha)2^{-\alpha}.
\end{split}
\end{equation}
In addition, note the fact that for all $z\in [0,1]$ it holds that $z(1-z)\le 2^{-2}$ and Jensen's inequality imply that for all $z\in [0,1]$ it holds that
\begin{equation}\label{n06e} 
\begin{split}
&\1_{(1,2]}(\alpha)\bigl(z(1-z)^\alpha+(1-z)z^\alpha\bigr)= \1_{(1,2]}(\alpha)(2z(1-z))\left( \tfrac{(1-z)^{\alpha-1}}{2}+\tfrac{z^{\alpha-1}}{2}\right)\\
&\le \1_{(1,2]}(\alpha)\left(\tfrac{1}{2}\right)\left[ \tfrac{(1-z)}{2}+\tfrac{z}{2}\right]^{\alpha-1}=\1_{(1,2]}(\alpha)2^{-\alpha}.
\end{split}
\end{equation}
Next observe that the fact that for all $z\in [0,1]$ it holds that $z(1-z)\le 2^{-2}$,  
and the fact that for all $c\in [0,1]$ it holds that $[0,\nicefrac{1}{4}] \ni y \mapsto y(1-2y)^c \in \R$ is non-decreasing,
and Jensen's inequality imply that for all $z\in [0,1]$ it holds that
\begin{align}
&\1_{(2,\infty)}(\alpha)\bigl(z(1-z)^\alpha+(1-z)z^\alpha\bigr)\le 
\1_{(2,\infty)}(\alpha)\bigl(z(1-z)^{\min\{\alpha,3\}}+(1-z)z^{\min\{\alpha,3\}}\bigr)\\
\nonumber
&= 
\1_{(2,\infty)}(\alpha)(z(1-z))\bigl((1-z)[(1-z)^{\min\{\alpha,3\}-2}]+z[z^{\min\{\alpha,3\}-2}]\bigr)\\
\nonumber
&\le 
\1_{(2,\infty)}(\alpha)(z(1-z))\bigl((1-z)^2+z^2\bigr)^{\min\{\alpha,3\}-2}
=
\1_{(2,\infty)}(\alpha)(z(1-z))\bigl(1-2z(1-z)\bigr)^{\min\{\alpha,3\}-2}
\\
\nonumber
&\le 
\1_{(2,\infty)}(\alpha)\max_{y\in [0,\nicefrac{1}{4}] }\Bigl[y\bigl(1-2y\bigr)^{\min\{\alpha,3\}-2}\Bigr]
=\1_{(2,\infty)}(\alpha)\Bigl[\tfrac{1}{4}\bigl(1-\tfrac{1}{2}\bigr)^{\min\{\alpha,3\}-2}\Bigr]
=\1_{(2,\infty)}(\alpha)2^{-\!\min\{\alpha,3\}}.
\end{align}
Combining this with \eqref{n06d} and \eqref{n06e} demonstrates that 
$\sup_{z\in [0,1]}(z(1-z)^\alpha+(1-z)z^\alpha)\le 2^{-\!\min\{3,\alpha\}}$.
This and \eqref{n06c} show that 
\begin{equation}
\begin{split}
&\sup_{t\in [0,T]} \norms{ \Xlem_t-x_t}
\le 2^{-\!\min\{3,\alpha\}} \left[\max_{ k \in \{ 1, 2, \ldots, m \}} | \tau_{ k } - \tau_{ k-1 } |^\alpha \right] \left[
\sup_{r,s\in [0,T], \, r\neq s} \frac{\norm{x_r-x_s}}{|r-s|^\alpha}\right].
\label{n06h}
\end{split}
\end{equation}
The proof of \cref{n001} is thus complete.
\end{proof}

\begin{lemma}[]\label{n002b}
\renewcommand{\epsilon}{\varepsilon}
\newcommand{\restricted}{\upharpoonright}
\renewcommand{\intOp}{\mathscr{L}}
\newcommand{\barY}{\mathscr{Y}}
\newcommand{\norm}[1]{{\left\lVert #1\right\rVert}}
\newcommand{\norms}[1]{{\bigl\lVert #1 \bigr\rVert}}
\newcommand{\Xlem}{X}
Let $V$ be an $\R$-vector space,
let $\norm{\cdot}\colon V\to[0,\infty]$ satisfy for all
$v,w\in V$, $\mathscr{v},\mathscr{w}\in\R$ 
with $\norm{v}+\norm{w}<\infty$
that
$\norm{\mathscr{v} v+\mathscr{w} w}\leq|\mathscr{v}| \norm{v}+|\mathscr{w}|\norm{w}$,
let $T \in(0,\infty)$,
$\alpha\in (0,1]$,
$m\in\N$,
$\tau_0, \tau_1,\ldots, \tau_m\in \R$
satisfy $0 = \tau_{ 0 } 
< \tau_{ 1 } < \ldots < \tau_{  m } = T$, and let
$x=(x_t)_{t\in [0,T]}\colon [0,T] \to V$ and 
$\Xlem=(\Xlem_t)_{t\in [0,T]} \colon [0,T]\to V$
satisfy for all $k \in \{ 1, 2, \ldots, m \}$, $t \in [ \tau_{ k-1 }, \tau_{ k } ] $
that
$
\Xlem_t= ({\tau_{k}-\tau_{k-1}})^{-1}
[(\tau_{k}-t)x_{\tau_{k-1}}+(t-\tau_{k-1})x_{\tau_k}]
$.
Then
\begin{equation}
\begin{split}
\left[\sup_{s,t\in [0,T], \, s\neq t}\frac{\norm{X_s-X_t}}{|s-t|^\alpha}\right]  \le \left[ \sup_{s,t\in [0,T], \, s\neq t}\frac{\norm{x_s-x_t}}{|s-t|^\alpha}\right].
\label{n06b}
\end{split}
\end{equation}
\end{lemma}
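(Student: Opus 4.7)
The plan is to fix $s, t \in [0,T]$ with $s<t$, identify the subintervals containing them by choosing $j,k \in \{1,\ldots,m\}$ with $s \in [\tau_{j-1}, \tau_j]$ and $t \in [\tau_{k-1}, \tau_k]$ (so $j \le k$), and show in each case that $\lVert X_t - X_s \rVert \le H |t-s|^\alpha$, where $H := \sup_{r \neq r'}\lVert x_r - x_{r'}\rVert / |r-r'|^\alpha$. I would reduce first to the case $H<\infty$, since otherwise the claimed bound is vacuous (in particular all pairs $\lVert x_r - x_{r'}\rVert$ are then finite, which makes every triangle inequality legitimate).

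The easy case is $j = k$, where the definition of $X$ gives directly $X_t - X_s = \frac{t-s}{\tau_k - \tau_{k-1}}(x_{\tau_k} - x_{\tau_{k-1}})$; combining the H\"older estimate on $\lVert x_{\tau_k} - x_{\tau_{k-1}}\rVert$ with the fact that $\alpha - 1 \le 0$ and $0 < t - s \le \tau_k - \tau_{k-1}$ (so $(\tau_k - \tau_{k-1})^{\alpha - 1} \le (t-s)^{\alpha-1}$) immediately yields $\lVert X_t - X_s \rVert \le H(t-s)^\alpha$.

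For the main case $j < k$, I would expand $X_s = (1-\mu_s) x_{\tau_{j-1}} + \mu_s x_{\tau_j}$ and $X_t = (1-\mu_t) x_{\tau_{k-1}} + \mu_t x_{\tau_k}$ with $\mu_s, \mu_t \in [0,1]$ and use the ``bilinear'' identity
\begin{equation*}
X_t - X_s = \sum_{(a,b) \in \{j-1,j\}\times\{k-1,k\}} w_{a,b}\,(x_{\tau_b} - x_{\tau_a}),
\end{equation*}
where $w_{j-1,k-1} = (1-\mu_s)(1-\mu_t)$, $w_{j-1,k} = (1-\mu_s)\mu_t$, $w_{j,k-1} = \mu_s(1-\mu_t)$, $w_{j,k} = \mu_s \mu_t$ are nonnegative weights summing to $1$. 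The triangle inequality combined with the H\"older bound $\lVert x_{\tau_b} - x_{\tau_a} \rVert \le H(\tau_b - \tau_a)^\alpha$ (using $\tau_b \ge \tau_a$) then gives $\lVert X_t - X_s \rVert \le H \sum_{a,b} w_{a,b} (\tau_b - \tau_a)^\alpha$.

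The finishing step is Jensen's inequality applied to the concave function $u \mapsto u^\alpha$ on $[0,\infty)$ (valid because $\alpha \in (0,1]$): $\sum_{a,b} w_{a,b}(\tau_b - \tau_a)^\alpha \le \bigl(\sum_{a,b} w_{a,b}(\tau_b - \tau_a)\bigr)^\alpha$. A short calculation using $(1-\mu_t)\tau_{k-1} + \mu_t \tau_k = t$ and $(1-\mu_s)\tau_{j-1} + \mu_s \tau_j = s$ shows that the convex combination of lengths equals exactly $t - s$, and combining this with the previous step yields \eqref{n06b}. The main subtlety I anticipate is precisely the choice of this decomposition: the more natural telescoping $X_t - X_s = \sum_{i=j}^k c_i (x_{\tau_i} - x_{\tau_{i-1}})$ produces, after the triangle inequality, a \emph{sum} of $\alpha$-powers $\sum c_i (\tau_i - \tau_{i-1})^\alpha$, which for $\alpha < 1$ is superadditive and can exceed $(t-s)^\alpha$; the bilinear decomposition instead produces a \emph{convex} combination of $\alpha$-powers, which is precisely the form in which Jensen's inequality goes in the direction we need.
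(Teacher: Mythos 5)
Your proposal is correct and follows essentially the same route as the paper's proof: the same case split (same subinterval versus different subintervals), the same bilinear four-term decomposition of $X_t-X_s$ with product weights, and the same use of concavity of $u\mapsto u^\alpha$ (Jensen) to collapse the convex combination of $\alpha$-powers into $|t-s|^\alpha$. The only cosmetic difference is in the same-subinterval case, where you use $(\tau_k-\tau_{k-1})^{\alpha-1}\le (t-s)^{\alpha-1}$ while the paper bounds $|\rho(t_1)-\rho(t_2)|^{1-\alpha}$ by $1$; these are equivalent manipulations.
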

\begin{proof}[Proof of \cref{n002b}]
\renewcommand{\epsilon}{\varepsilon}
\newcommand{\barY}{\mathscr{Y}}
\newcommand{\norm}[1]{{\left\lVert #1\right\rVert}}
\newcommand{\norms}[1]{{\bigl\lVert #1 \bigr\rVert}}
\newcommand{\vlem}{\mathfrak{y}}
\newcommand{\Xlem}{X}
Throughout this proof assume without loss of generality that for all $s,t\in [0,T]$ with $s\neq t$ it holds that $\norm{x_s-x_t}<\infty$
and
 let $n\colon [0,T]\to \N$ and $\rho \colon [0,T] \to [0,1]$ satisfy for all $t\in [0,T]$ that
\begin{equation}\label{eq:def_n_rho}
n(t)=\min\{k\in \{1,2,\ldots,m\}\colon \tau_k \ge t \} \qquad \text{and} \qquad \rho(t)=\frac{t-\tau_{n(t)-1}}{\tau_{n(t)}-\tau_{n(t)-1}}.
\end{equation}
Note that \eqref{eq:def_n_rho} ensures that for all $t\in [0,T]$ it holds that
\begin{equation}\label{eq:Xlemtwotwo}
X_t=(1-\rho(t))x_{\tau_{n(t)-1}}+\rho(t)x_{\tau_{n(t)}}=x_{\tau_{n(t)-1}}+\rho(t)(x_{\tau_{n(t)}}-x_{\tau_{n(t)-1}}).
\end{equation}
The fact that for all $v,w\in V$, $\mathscr{v},\mathscr{w}\in\R$ 
with $\norm{v}+\norm{w}<\infty$ it holds 
that
$\norm{\mathscr{v} v+\mathscr{w} w}\leq|\mathscr{v}| \norm{v}+|\mathscr{w}|\norm{w}$ 
hence ensures that for all $t_1,t_2\in [0,T]$ with $t_1<t_2$ and $n(t_1)=n(t_2)$ it holds that
\begin{align}\label{n0815}
&\nonumber \norm{X_{t_1}-X_{t_2}}=
\bigl \lVert (\rho(t_1)-\rho(t_2))(x_{\tau_{n(t_1)}}-x_{\tau_{n(t_1)-1}})\bigr \rVert\\
& =\bigl \lVert (\rho(t_1)-\rho(t_2))(x_{\tau_{n(t_1)}}-x_{\tau_{n(t_1)-1}})+0 (x_T-x_0)\bigr \rVert
\nonumber 
\le \lvert \rho(t_1)-\rho(t_2) \rvert \bigl \lVert x_{\tau_{n(t_1)}}-x_{\tau_{n(t_1)-1}}\bigr \rVert\\
\nonumber
&\le 
\left[
\lvert \rho(t_1)-\rho(t_2) \rvert 
\right]
\left[
\lvert \tau_{n(t_1)}-\tau_{n(t_1)-1} \rvert^\alpha 
\right]
\left[
\sup_{s,t \in [0,T],\, s\neq t}\tfrac{ \lVert x_s-x_t  \rVert}{|s-t|^\alpha}
\right]\\
&=
\lvert \rho(t_1)-\rho(t_2) \rvert 
^{1-\alpha}
\left[
\lvert \rho(t_1)-\rho(t_2) \rvert 
\lvert \tau_{n(t_1)}-\tau_{n(t_1)-1} \rvert
\right]^\alpha 
\left[
\sup_{s,t \in [0,T],\, s\neq t}\tfrac{ \lVert x_s-x_t  \rVert}{|s-t|^\alpha}
\right]\\
\nonumber
&\le 
\left[
\lvert \rho(t_1)-\rho(t_2) \rvert 
\lvert \tau_{n(t_1)}-\tau_{n(t_1)-1} \rvert
\right]^\alpha 
\left[
\sup_{s,t \in [0,T],\, s\neq t}\tfrac{ \lVert x_s-x_t  \rVert}{|s-t|^\alpha}
\right]
=
\lvert t_1-t_2 \rvert 
^\alpha 
\left[
\sup_{s,t \in [0,T],\, s\neq t}\tfrac{ \lVert x_s-x_t  \rVert}{|s-t|^\alpha}
\right].
\end{align}
Moreover, observe that \eqref{eq:Xlemtwotwo} and the fact that for all $v,w\in V$, $\mathscr{v},\mathscr{w}\in\R$ 
with $\norm{v}+\norm{w}<\infty$ it holds 
that
$\norm{\mathscr{v} v+\mathscr{w} w}\leq|\mathscr{v}| \norm{v}+|\mathscr{w}|\norm{w}$ ensure that for all $t_1,t_2\in [0,T]$ with $n(t_1)<n(t_2)$ it holds that
\begin{align}
&\norm{X_{t_1}-X_{t_2}}=
\bigl \lVert \bigl[(1-\rho(t_1))x_{\tau_{n(t_1)-1}}+\rho(t_1)x_{\tau_{n(t_1)}}\bigr]
-
\bigl[ 
(1-\rho(t_2))x_{\tau_{n(t_2)-1}}+\rho(t_2)x_{\tau_{n(t_2)}}
\bigr]
 \bigr \rVert 
\nonumber  
 \\
 &\le 
 (1-\rho(t_1))(1-\rho(t_2))\bigl \lVert
x_{\tau_{n(t_1)-1}}-x_{\tau_{n(t_2)-1}}
 \bigr \rVert
 +
 \rho(t_1)\rho(t_2)\bigl \lVert
x_{\tau_{n(t_1)}}-x_{\tau_{n(t_2)}}
 \bigr \rVert\\
 \nonumber
 &+
 (1-\rho(t_1))\rho(t_2)\bigl \lVert
x_{\tau_{n(t_1)-1}}-x_{\tau_{n(t_2)}}
 \bigr \rVert
 +
 \rho(t_1)(1-\rho(t_2))\bigl \lVert
x_{\tau_{n(t_1)}}-x_{\tau_{n(t_2)-1}}
 \bigr \rVert\\
 \nonumber
 &\le 
 \left[
\sup_{s,t \in [0,T],\, s\neq t}\tfrac{ \lVert x_s-x_t  \rVert}{|s-t|^\alpha}
\right]
 \bigl [
 (1-\rho(t_1))(1-\rho(t_2))\lvert
\tau_{n(t_1)-1}-\tau_{n(t_2)-1}
  \rvert^\alpha
 +
 \rho(t_1)\rho(t_2)\lvert
\tau_{n(t_1)}-\tau_{n(t_2)}
  \rvert^\alpha\\
  \nonumber 
 &+
 (1-\rho(t_1))\rho(t_2) \lvert
\tau_{n(t_1)-1}-\tau_{n(t_2)}
 \rvert^\alpha 
 +
 \rho(t_1)(1-\rho(t_2)) \lvert
 \tau_{n(t_1)}-\tau_{n(t_2)-1}
  \rvert^\alpha
 \bigr].
 \end{align}
The fact that the function $(-\infty,0]\ni z \mapsto |z|^\alpha \in \R$ is concave hence shows that for all $t_1,t_2\in [0,T]$ with $n(t_1)<n(t_2)$ it holds that
\begin{align}
&\norm{X_{t_1}-X_{t_2}} \nonumber \\
\nonumber
 &\le 
 \left[
\sup_{s,t \in [0,T],\, s\neq t}\tfrac{ \lVert x_s-x_t  \rVert}{|s-t|^\alpha}
\right]
 \bigl \lvert
 (1-\rho(t_1))(1-\rho(t_2))(
\tau_{n(t_1)-1}-\tau_{n(t_2)-1}
  )
 +
 \rho(t_1)\rho(t_2)(
\tau_{n(t_1)}-\tau_{n(t_2)}
  )\\
 &+
 (1-\rho(t_1))\rho(t_2) (
\tau_{n(t_1)-1}-\tau_{n(t_2)}
)
 +
 \rho(t_1)(1-\rho(t_2))(
 \tau_{n(t_1)}-\tau_{n(t_2)-1}
  )
 \bigr\rvert^\alpha \\
 \nonumber
 &=
 \left[
\sup_{s,t \in [0,T],\, s\neq t}\tfrac{ \lVert x_s-x_t  \rVert}{|s-t|^\alpha}
\right]
 \bigl \lvert  [\tau_{n(t_1)-1}+\rho(t_1)(\tau_{n(t_1)}-\tau_{n(t_1)-1})]
 -[
 \tau_{n(t_2)-1}+\rho(t_2)(\tau_{n(t_2)}-\tau_{n(t_2)-1})
 ]
  \bigr \rvert ^\alpha\\
 \nonumber
 &= \left[
\sup_{s,t \in [0,T],\, s\neq t}\tfrac{ \lVert x_s-x_t  \rVert}{|s-t|^\alpha}
\right] |t_1-t_2|^\alpha.
 \end{align}
 Combining this and \eqref{n0815} proves \eqref{n06b}. The proof of \cref{n002b} is thus complete.
\end{proof}

\begin{lemma}[]\label{n002}
\renewcommand{\epsilon}{\varepsilon}
\newcommand{\restricted}{\upharpoonright}
\renewcommand{\intOp}{\mathscr{L}}
\newcommand{\barY}{\mathscr{Y}}
\newcommand{\norm}[1]{{\left\lVert #1\right\rVert}}
\newcommand{\norms}[1]{{\bigl\lVert #1 \bigr\rVert}}
Let $V$ be an $\R$-vector space,
let $\norm{\cdot}\colon V\to[0,\infty]$ satisfy for all
$v,w\in V$, $\mathscr{v},\mathscr{w}\in\R$ 
with $\norm{v}+\norm{w}<\infty$
that
$\norm{\mathscr{v} v+\mathscr{w} w}\leq|\mathscr{v}| \norm{v}+|\mathscr{w}|\norm{w}$,
let $T \in(0,\infty)$, $\alpha\in (0,1]$, 
$n\in\N$,
$m_1, m_2, \ldots, m_n \in \N$, let $\tau_{ l, k } \in \R$, $k \in \{ 0, 1, \ldots, m_l \}$, $l \in \{ 1, 2, \ldots, n \}$, satisfy for all $l \in \{ 1, 2, \ldots, n \}$ that $0 = \tau_{ l, 0 } 
< \tau_{ l, 1 } < \ldots < \tau_{ l, m_l } = T$ and 
$
( \cup_{ i = 0 }^{ m_l } \{ \tau_{ l, i } \} ) \subseteq ( \cup_{i=0}^{ m_{l+1} } \{\tau_{ l+1, i }\} )
$,
let $\intOp_l \colon V^{ [0,T] } \to V^{ [0,T] }$, $l \in \N$, satisfy
for all $l \in \{ 1, 2, \ldots , n \}$, $k \in \{ 1, 2, \ldots, m_l \}$, $t \in [ \tau_{ l,k-1 }, \tau_{ l,k } ] $, $y = ( y_t )_{ t \in [0,T] } \colon [0,T] \to V$ that 
$(\intOp_l(y))(t)=({\tau_{l,k}-\tau_{l,k-1}})^{-1}
[(\tau_{l,k}-t)y_{\tau_{l,k-1}}+(t-\tau_{l,k-1})y_{\tau_{l,k}}]$,
and let $Y^\ell=(Y^\ell_t)_{t\in[0,T]}\colon [0,T]\to V$,
 $\ell\in \N_0$,  and
$\barY =(\barY_t)_{t\in[0,T]}\colon [0,T]\to V$ satisfy
\begin{equation}\label{n05}
\barY
= \intOp_1(Y^n)
+
\sum_{\ell=1}^{n-1}\left[ \intOp_{l+1}(Y^{n-l})
-\intOp_{l}(Y^{n-l})
\right]
.
\end{equation}
Then
\begin{align}
\sup_{t\in[0,T]}\norm{\barY_t-Y^0_t}
&\leq  \max_{k\in \{0,1,\ldots m_1\}}\norms{Y^n_{\tau_{1,k}}-Y^0_{\tau_{1,k}}}
+ \left[\max_{ k \in \{ 1, 2, \ldots, m_{n} \}} \tfrac{| \tau_{ n,k } - \tau_{ n,k-1 } |^\alpha}{2^{\alpha}}\right]\left[\sup_{t,s\in[0,T],\, t\neq s}\tfrac{\norm{Y_t^0-Y_s^0}}{|t-s|^\alpha}\right]\nonumber \\
&\quad
+\sum_{l=1}^{n-1}\left[\max_{ k \in \{ 1, 2, \ldots, m_{l} \}} \tfrac{| \tau_{ l,k } - \tau_{ l,k-1 } |^\alpha}{2^{\alpha}}\right] \left[
\sup_{t,s\in [0,T], \, t\neq s} \tfrac{\norm{(Y^{n-l}_t-Y^0_t)-(Y^{n-l}_s-Y^0_s)}}{|t-s|^\alpha}\right].
\end{align}
\end{lemma}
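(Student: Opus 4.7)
The plan is to rewrite $\bar{Y} - Y^0$ in a form that isolates three types of contributions: a coarsest-grid interpolation of $Y^n - Y^0$, a sum of interpolation discrepancies on nested grids applied to $Y^{n-l} - Y^0$, and a finest-grid interpolation error on $Y^0$. Each interpolation operator $\intOp_l$ is pointwise a convex combination, hence linear in its argument, so this rearrangement is legitimate. Specifically, I would add and subtract $\intOp_l(Y^0)$ and $\intOp_{l+1}(Y^0)$ inside each summand of \eqref{n05}, writing
\begin{equation*}
\intOp_{l+1}(Y^{n-l}) - \intOp_l(Y^{n-l}) = \intOp_{l+1}(Y^{n-l} - Y^0) - \intOp_l(Y^{n-l} - Y^0) + [\intOp_{l+1}(Y^0) - \intOp_l(Y^0)],
\end{equation*}
and exploit the fact that $\sum_{l=1}^{n-1}[\intOp_{l+1}(Y^0) - \intOp_l(Y^0)]$ telescopes to $\intOp_n(Y^0) - \intOp_1(Y^0)$. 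Combining with $\intOp_1(Y^n) - \intOp_1(Y^0) = \intOp_1(Y^n - Y^0)$ and then subtracting $Y^0$ gives the decomposition
\begin{equation*}
\bar{Y} - Y^0 = \intOp_1(Y^n - Y^0) + \sum_{l=1}^{n-1}\bigl[\intOp_{l+1}(Y^{n-l} - Y^0) - \intOp_l(Y^{n-l} - Y^0)\bigr] + [\intOp_n(Y^0) - Y^0].
\end{equation*}

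Next, I would estimate the three pieces separately. For the first piece, since $\intOp_1$ evaluates at any $t$ as a convex combination of values at two adjacent grid points, the generalized triangle inequality for $\|\cdot\|$ yields $\sup_t \|\intOp_1(Y^n - Y^0)(t)\| \leq \max_{k}\|Y^n_{\tau_{1,k}} - Y^0_{\tau_{1,k}}\|$, which is exactly the first summand on the right-hand side of the target bound. For the last piece, I apply \cref{n001} with $x = Y^0$ on the level-$n$ grid; since $\alpha \in (0,1]$, the prefactor $2^{-\min\{3,\alpha\}}$ in \cref{n001} equals $2^{-\alpha}$, producing the finest-grid contribution in the target bound.

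The main obstacle is controlling the intermediate terms $\intOp_{l+1}(Y^{n-l}-Y^0) - \intOp_l(Y^{n-l}-Y^0)$. The key observation I would exploit is the nested-grid hypothesis $\cup_{k}\{\tau_{l,k}\} \subseteq \cup_{k}\{\tau_{l+1,k}\}$: for any function $f$ the interpolant $\intOp_{l+1}(f)$ agrees with $f$ at every grid-$(l+1)$ point, hence in particular at every grid-$l$ point, and uniqueness of piecewise linear interpolation between consecutive grid-$l$ nodes forces $\intOp_l(\intOp_{l+1}(f)) = \intOp_l(f)$. Consequently $\intOp_{l+1}(f) - \intOp_l(f) = \intOp_{l+1}(f) - \intOp_l(\intOp_{l+1}(f))$ is itself a level-$l$ interpolation error, now applied to the \emph{interpolated} function $\intOp_{l+1}(f)$. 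Applying \cref{n001} to $x = \intOp_{l+1}(Y^{n-l}-Y^0)$ with the grid at level $l$ bounds this by $2^{-\alpha}\bigl[\max_k |\tau_{l,k}-\tau_{l,k-1}|^\alpha\bigr]$ times the $\alpha$-Hölder seminorm of $\intOp_{l+1}(Y^{n-l}-Y^0)$, which \cref{n002b} in turn dominates by the $\alpha$-Hölder seminorm of $Y^{n-l}-Y^0$.

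Assembling these three estimates via the generalized triangle inequality for $\|\cdot\|$ (noting that the relevant quantities are non-negative, so infinite values on the right-hand side trivialize the inequality) yields exactly the stated bound. Beyond the nested-grid identity $\intOp_l \circ \intOp_{l+1} = \intOp_l$, the remaining work is bookkeeping of signs in the telescoping rearrangement and correct invocation of \cref{n001} and \cref{n002b}; no further analytic input is required.
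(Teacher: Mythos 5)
Your proposal is correct and follows essentially the same route as the paper's proof: the same telescoping decomposition into $\intOp_1(Y^n-Y^0)$, the nested-grid terms $\intOp_{l+1}(Y^{n-l}-Y^0)-\intOp_l(\intOp_{l+1}(Y^{n-l}-Y^0))$, and the finest-grid error $\intOp_n(Y^0)-Y^0$, with \cref{n001} and \cref{n002b} invoked exactly as in the paper. No gaps.
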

\begin{proof}[Proof of \cref{n002}]
\renewcommand{\epsilon}{\varepsilon}
\renewcommand{\intOp}{\mathscr{L}}
\newcommand{\barY}{\mathscr{Y}}
\newcommand{\norm}[1]{{\left\lVert #1\right\rVert}}
\newcommand{\norms}[1]{{\bigl\lVert #1 \bigr\rVert}}
\newcommand{\vlem}{\mathfrak{y}}
Throughout this proof
let $\varepsilon_l\in \R$, $l\in \{1,2,\ldots, n\}$, satisfy for all $l\in \{1,2,\ldots, n\}$ that 
$\varepsilon_l=\max_{ k \in \{ 1, 2, \ldots, m_{l} \}} | \tau_{ l,k } - \tau_{ l,k-1 } |$.
Observe that 
for all $l\in \{0,1,\ldots,n\}$, $y = ( y_t )_{ t \in [0,T] } \colon [0,T] \to V$
it holds that
\begin{equation}\label{n060}
\sup_{t\in[0,T]}
\norm{(\intOp_l(y))(t)}
\leq
\max_{k\in \{0,1,\ldots, m_l\}} \norm{y_{\tau_{l,k}}}.
\end{equation}
Next note that \eqref{n05} and the fact that 
$
\intOp_n (Y^0)= \intOp_{1} (Y^0)+
\sum_{l=1}^{n-1}
\left[ \intOp_{l+1} (Y^0)
- \intOp_l(Y^0)
\right]
$
demonstrate that
\begin{align}
&\barY - \intOp_n (Y^0)\nonumber\\
&=
\left[
\intOp_1(Y^n)
+
\sum_{\ell=1}^{n-1}\left[ \intOp_{l+1}(Y^{n-l})
-\intOp_{l}(Y^{n-l})
\right]
\right]
-
\left[
 \intOp_{1} (Y^0)+
\sum_{l=1}^{n-1}
\left[ \intOp_{l+1} (Y^0)
- \intOp_l(Y^0)
\right]
\right]\nonumber\\
&=
\intOp_{1}(Y^n-Y^0)
+
\sum_{l=1}^{n-1}
\left[
\intOp_{l+1}(Y^{n-l}-Y^0)
-
\intOp_l(Y^{n-l}-Y^0)
\right].
\end{align}
This, the fact that for all $v, w \in V$ it holds that 
$\norm{v+w} \leq \norm{v} + \norm{v}$, and \eqref{n060} ensure that
\begin{align}\label{n007}
&\sup_{t\in[0,T]}\norm{ \barY_t - (\intOp_n(Y^0))(t)}\\
\nonumber
&\leq \sup_{t\in[0,T]}\norm{
(\intOp_1(Y^n-Y^0))(t)}
+\sum_{l=1}^{n-1}\left[\sup_{t\in[0,T]}
\norm{
\bigl(
\intOp_{l+1}(Y^{n-l}-Y^0)\bigr)(t)
-
\bigl(\intOp_l(Y^{n-l}-Y^0)\bigr)(t)
}\right]\\ 
&\le \max_{k\in \{0,1,\ldots m_1\}}\norms{Y^n_{\tau_{1,k}}-Y^0_{\tau_{1,k}}}+\sum_{l=1}^{n-1}\left[\sup_{t\in[0,T]}
\norm{
\bigl(\intOp_{l+1}(Y^{n-l}-Y^0)\bigr)(t)
-
\bigl(\intOp_l(Y^{n-l}-Y^{0})\bigr)(t)
}\right].\nonumber
\end{align}
Moreover, note that for all $l\in \{1,2,\ldots,n-1\}$, $y = ( y_t )_{ t \in [0,T] } \colon [0,T] \to V$, $i\in \{0,1,\ldots, m_{l+1}\}$
it holds that $(\intOp_{l+1}(y))(\tau_{l+1,i})=y_{\tau_{l+1,i}}$. The assumption that for all $l\in \{1,2,\ldots,n-1\}$ it holds that 
$
( \cup_{ i = 0 }^{ m_l } \{ \tau_{ l, i } \} ) \subseteq ( \cup_{i=0}^{ m_{l+1} } \{\tau_{ l+1, i }\} )
$
therefore implies that
for all $l\in \{1,2,\ldots,n-1\}$, $y = ( y_t )_{ t \in [0,T] } \colon [0,T] \to V$, $i\in \{0,1,\ldots, m_{l}\}$
it holds that $(\intOp_{l+1}(y))(\tau_{l,i})=y_{\tau_{l,i}}$.
This proves that
for all $l\in \{1,2,\ldots,n-1\}$, $y = ( y_t )_{ t \in [0,T] } \colon [0,T] \to V$ it holds that
$\intOp_{l}(y)=\intOp_{l}(\intOp_{l+1}(y))$.
This and \eqref{n007} ensure that
\begin{equation}
\begin{split}
&\sup_{t\in[0,T]}\norm{ \barY_t - (\intOp_n(Y^0))(t)}
\leq \max_{k\in \{0,1,\ldots m_1\}}\norms{Y^n_{\tau_{1,k}}-Y^0_{\tau_{1,k}}}\\
&
+\sum_{l=1}^{n-1}\left[\sup_{t\in[0,T]}
\norm{
\bigl(\intOp_{l+1}(Y^{n-l}-Y^0)\bigr)(t)
-
\bigl(\intOp_l(\intOp_{l+1}(Y^{n-l}-Y^{0}))\bigr)(t)
}\right].
\end{split}
\end{equation}
\cref{n001} 
hence proves that
\begin{equation}
\begin{split}
&\sup_{t\in[0,T]}\norms{ \barY_t - (\intOp_n(Y^0))(t)}
\leq \max_{k\in \{0,1,\ldots m_1\}}\norms{Y^n_{\tau_{1,k}}-Y^0_{\tau_{1,k}}}\\
&+\sum_{l=1}^{n-1}\frac{ \lvert \varepsilon_l\rvert^\alpha}{2^{\alpha}} \left[
\sup_{r,s\in [0,T], \, r\neq s} \frac{\norm{\bigl(\intOp_{l+1}(Y^{n-l}-Y^0)\bigr)(r)-\bigl(\intOp_{l+1}(Y^{n-l}-Y^0)\bigr)(s)}}{|r-s|^\alpha}\right].
\end{split}
\end{equation}
\cref{n002b}
hence ensures that 
\begin{equation}\label{n008}
\begin{split}
&\sup_{t\in[0,T]}\norm{ \barY_t - (\intOp_n(Y^0))(t)}
\leq\max_{k\in \{0,1,\ldots m_1\}}\norms{Y^n_{\tau_{1,k}}-Y^0_{\tau_{1,k}}}\\
&+\sum_{l=1}^{n-1}\frac{ \lvert \varepsilon_l\rvert^\alpha}{2^{\alpha}} \left[
\sup_{r,s\in [0,T], \, r\neq s} \frac{\norm{(Y^{n-l}_s-Y^0_s)-(Y^{n-l}_r-Y^0_r)}}{|r-s|^\alpha}\right].
\end{split}
\end{equation}
Moreover, observe that \cref{n001} ensures that 
\begin{align}
\sup_{t\in [0,T]}\norm{(\intOp_n (Y^0))(t)-Y^0_t}
\leq 
\frac{|\epsilon_n|^\alpha}{2^{\alpha}}\left[\sup_{t,s\in[0,T],\, t\neq s}\frac{\norm{Y_t^0-Y_s^0}}{|t-s|^\alpha}\right].
\end{align}
The fact that for all $v, w \in V$ it holds that 
$\norm{v+w} \leq \norm{v} + \norm{v}$ and \eqref{n008} hence demonstrate that
\begin{align}\begin{split}
\sup_{t\in[0,T]}\norm{\barY_t-Y^0_t}&\leq 
\sup_{t\in[0,T]}\Biggl[
\norm{\barY_t-(\intOp_n (Y^0))(t)}+
\norm{(\intOp_n (Y^0)(t)-Y^0_t}\Biggr]\\
&
\leq \max_{k\in \{0,1,\ldots m_1\}}\norms{Y^n_{\tau_{1,k}}-Y^0_{\tau_{1,k}}}
+ \frac{|\epsilon_n|^\alpha}{2^{\alpha}}\left[\sup_{t,s\in[0,T],\, t\neq s}\frac{\norm{Y_t^0-Y_s^0}}{|t-s|^\alpha}\right]\\
&\quad
+\sum_{l=1}^{n-1}\frac{ \lvert \varepsilon_l\rvert^\alpha}{2^{\alpha}} \left[
\sup_{r,s\in [0,T], \, r\neq s} \frac{\norm{(Y^{n-l}_s-Y^0_s)-(Y^{n-l}_r-Y^0_r)}}{|r-s|^\alpha}\right].
\end{split}\end{align}
The proof of \cref{n002} is thus complete.
\end{proof}

\section{Existence, uniqueness, and H\"older continuity properties for solutions of stochastic fixed-point equations}\label{sec:3}
\newcommand{\lyaV}{\varphi}
\newcommand{\cstC}{\left\|\|Z\|\right\|_{\lpspace{\frac{1}{1-\beta}}}}

An important aspect in the derivation of the Monte Carlo-type approximation algorithm for BSDEs in \eqref{eq:mlp_intro}--\eqref{eq:approx_intro} in \cref{h01} in \cref{sec:intro} above is, loosely speaking, to reformulate the solutions of the BSDEs in \eqref{eq:bsde_intro} as solutions of appropriate SFPEs associated to the BSDEs in \eqref{eq:bsde_intro} and in this section we establish in \cref{k16} below existence, uniqueness, and H\"older continuity properties for solutions of such SFPEs.

In particular, under suitable assumptions, item \eqref{k03} in \cref{k16} proves that the SFPE in \eqref{k03b} below has a unique solution within the set of functions which grow at most like a certain Lyapunov-type function (see the function $V\colon [0,T]\times \R^d \to [1,\infty)$ above \eqref{k03b} in \cref{k16} for details), item \eqref{k15} in \cref{k16} establishes a suitable explicit a priori growth bound for the unique solution of the SFPE in \eqref{k03b}, and item \eqref{k20b} in \cref{k16} proves that the unique solution of the SFPE in \eqref{k03b} is $ \nicefrac{1}{2} $-H\"{o}lder-continuous in the time variable $ t \in [0,T] $ and locally $ 1 $-H\"{o}lder continous (locally Lipschitz continuous) in the space variable $ x \in \R^d $. 

Further existence, uniqueness, and regularity results for SFPEs can, e.g., be found in \cite[Section 4]{hjk2019overcoming}, \cite[Section 2 and Section 3]{beck2021nonlinear}, \cite[Section 2 and Section 3]{beck2019existenceb}, and \cite[Section 2]{HJKN20}.


\begin{lemma}\label{k16}
Let $d\in \N$, 
 $L\in [0,\infty)$, $T\in (0,\infty)$, $p_1,p_2,p_3\in[1,\infty]$
satisfy $\frac{1}{p_1}+\frac{1}{p_2}+\frac{1}{p_3}\leq 1$,
let $\lVert \cdot \rVert \colon \R^d\to[0,\infty)$ be a norm, 
let
$f\colon [0,T]\times\R^d\times\R \to \R$, $g\colon \R^d\to \R$, 
$\phi\colon [0,T]\times\R^d\to   [1,\infty) $, ${V}\colon [0,T]\times\R^d\to   [1,\infty) $, and $\psi\colon [0,T]\times\R^d\to   [1,\infty) $ be measurable,
let 
$(\Omega,\mathcal{F},\P)$ be a probability space,
for every random variable 
$\mathfrak{X}\colon\Omega\to\R$
 let 
 $\lp{\mathfrak{X}}_{p} \in [0,\infty]$, $p\in[1,\infty]$,
satisfy for all $p\in[1,\infty)$ that $\lp{\mathfrak{X}}_{p}= (\E[|\mathfrak{X}|^p])^{1/p}$ and $\lp{\mathfrak{X}}_{\infty}=
\inf (\{r\in [0,\infty)\colon \P(|\mathfrak{X}|>r )=0 \}\cup\{\infty\})
$,
for every
$ s\in[0,T]$, $x\in\R^d$ let
$X^x_{s,(\cdot)}=(X_{s,t}^x(\omega))_{(t,\omega)\in[s,T]\times\Omega}\colon  [s,T] \times\Omega\to\R^d$
be measurable,
assume for all measurable  $h\colon [0,T]\times\R^d\times\R^d\to [0,\infty)$ that
$
\{(\mathfrak{s},\mathfrak{t})\in [0,T]^2\colon\mathfrak{s}\leq \mathfrak{t}\}
\times\R^d \times \R^d \ni (\mathfrak{s},\mathfrak{t},x,y) \mapsto \E\bigl[h\bigl(\mathfrak{t},X^{x}_{\mathfrak{s},\mathfrak{t}},X^{y}_{\mathfrak{s},\mathfrak{t}}\bigr)\bigr] \in [0,\infty]
$
is measurable,
 and
assume for all $s\in [0,T]$, $t\in[s,T]$, $r\in[t,T]$,
$x,y\in \R^d$,
$v,w\in \R$ and all measurable  $h\colon [0,T]\times\R^d\times\R^d\to [0,\infty)$ that
\begin{gather}\label{k07}
\lp{\phi(t,X_{s,t}^x)}_{p_3}\leq \phi(s,x), \qquad \max\bigr\{|g(x)|\1_{\{T\}}(s),|Tf(s,x,0)|, \lp{V(t,X_{s,t}^x)}_{p_1}\bigl\}\leq V(s,x)
\\
\label{k05}
|g(x)-g(y)|\leq \tfrac{1}{2\sqrt{T}} (V(T,x)+V(T,y))\|x-y\|,\\
\label{k05b}
|f(t,x,v)-f(t,y,w)|\leq L |v-w|+
\tfrac{1}{2T^{3/2}} (V(t,x)+V(t,y))\|x-y\|,
 \\
\lp{\|
X_{s,t}^x-x\|
}_{p_2}\leq \psi(s,x)|s-t|^{\nicefrac{1}{2}},\qquad 
\lp{\|X_{s,t}^x-X_{s,t}^y\|}_{p_2}\leq
\tfrac{1}{2} (\phi(s,x)+\phi(s,y))\|x-y\|,
\label{k04}
\\
  \label{b01k}
  \text{and}\qquad 
\E\! \left[ \E\!\left[ h\bigl(r,X^{a}_{t,r},X^{b}_{t,r}\bigr) \right] \bigr|_{(a,b)=(X_{s,t}^{x},X_{s,t}^{y})} \right]
   =\E\!\left[h\bigl(r,X^{x}_{s,r},X^{y}_{s,r}\bigr)\right] ,
\end{gather}
Then 
\begin{enumerate}[(i)]
\item \label{k03}
there exists a unique measurable  $u\colon [0,T]\times\R^d\to\R$ which satisfies for all $t\in[0,T]$, $x\in \R^d$ that
$\E\bigl[|
g(X_{t,T}^x)|\bigr]+
\int_{t}^{T} \E\bigl[|f(r,X_{t,r}^x, u(r,X_{t,r}^x))|
\bigr]\,d r+\sup_{r\in[0,T],\xi\in\R^d}\bigl(\frac{|u(r,\xi)|}{V(r,\xi)}\bigr)<\infty$ and
\begin{align}\label{k03b}
u(t,x)=
\E\bigl[
g(X_{t,T}^x)\bigr]+
\int_{t}^{T} \E\bigl[f(r,X_{t,r}^x, u(r,X_{t,r}^x))
\bigr]\,dr ,
\end{align}
\item \label{k15} it holds 
 for all 
$t\in[0,T]$,
$x\in\R^d$ 
that $|u(t,x)|\leq 2e^{L(T-t)}V(t,x)$,
and
\item  \label{k20b}it holds 
 for all 
 $s\in[0,T]$, 
$t\in[s,T]$,
$x,y\in\R^d$ 
that
$
|u(s,x)-u(t,y)|\leq T^{ - 1 / 2 }  e^{2 L T}(V(s,x)+V(t,y))
(\phi(s,x)+\phi(t,y))
\bigl[\psi(s,x)|s-t|^{\nicefrac{1}{2}}+\|x-y\|\bigr]
$.
\end{enumerate}
\end{lemma}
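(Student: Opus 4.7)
The plan is to prove (i) by a Banach fixed-point argument, (ii) by a Gronwall inequality, and (iii) by decomposing the difference via the flow property \eqref{b01k} and combining a temporal estimate from (ii) with a spatial Lipschitz bound on $u(t, \cdot)$.

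For (i), I work in the space $\mathcal{V}$ of measurable functions $u \colon [0,T] \times \R^d \to \R$ with $\|u\|_V := \sup_{(r,\xi)} |u(r,\xi)|/V(r,\xi) < \infty$ equipped with this weighted sup-norm, and define the Picard operator $(\Phi u)(t,x) := \E[g(X_{t,T}^x)] + \int_t^T \E[f(r, X_{t,r}^x, u(r, X_{t,r}^x))]\, dr$. The hypothesis on measurability of averages of two-point functions (applied to $h$ depending only on its first argument) ensures $\Phi u$ is measurable, while the growth bounds \eqref{k07} together with the $v$-Lipschitz part of \eqref{k05b} give $\Phi(\mathcal{V}) \subseteq \mathcal{V}$. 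Iterating the pointwise estimate $|(\Phi u)(t,x) - (\Phi v)(t,x)| \leq L(T-t) V(t,x) \|u-v\|_V$ $n$ times yields $\|\Phi^n u - \Phi^n v\|_V \leq (LT)^n/n!\,\|u-v\|_V$, so some $\Phi^n$ is a contraction and Banach's theorem produces the unique fixed point, giving (i). For (ii), inserting $|g| \leq V(T, \cdot)$, $|f(r,\xi,0)| \leq V(r,\xi)/T$, the $v$-Lipschitz of $f$, and $\E[V(r, X^x_{t,r})] \leq V(t,x)$ into the SFPE gives $|u(t,x)| \leq 2V(t,x) + L \int_t^T \E[|u(r, X^x_{t,r})|]\, dr$; with $\mathcal{U}(t) := \sup_{\xi} |u(t,\xi)|/V(t,\xi)$ this becomes $\mathcal{U}(t) \leq 2 + L \int_t^T \mathcal{U}(r)\, dr$ and Gronwall gives $\mathcal{U}(t) \leq 2 e^{L(T-t)}$.

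For (iii), assume without loss of generality $s \leq t$. Applying \eqref{b01k} to test functions depending only on the first argument yields the flow identity
\begin{equation*}
u(s,x) - \E[u(t, X^x_{s,t})] = \int_s^t \E[f(r, X^x_{s,r}, u(r, X^x_{s,r}))]\, dr,
\end{equation*}
whose right-hand side is bounded, via (ii) and the elementary inequality $t-s \leq \sqrt{T(t-s)}$, by a multiple of $V(s,x)|s-t|^{1/2}$. For the remaining term $|\E[u(t, X^x_{s,t}) - u(t, y)]|$ one first establishes the pointwise spatial Lipschitz estimate
\begin{equation*}
|u(t, a) - u(t, b)| \leq T^{-1/2} e^{LT}(V(t,a) + V(t,b))(\phi(t,a) + \phi(t,b))\|a-b\|
\end{equation*}
via a Gronwall-type argument on the SFPE that combines the spatial Lipschitz properties \eqref{k05}, \eqref{k05b} with the process regularity \eqref{k04}. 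One then applies Jensen and H\"older with all three exponents $p_1, p_2, p_3$ (using $\lp{V(t, X^x_{s,t})}_{p_1} \leq V(s,x)$, $\lp{\phi(t, X^x_{s,t})}_{p_3} \leq \phi(s,x)$, and the triangle-inequality bound $\lp{\|X^x_{s,t} - y\|}_{p_2} \leq \psi(s,x)|s-t|^{1/2} + \|x-y\|$) to obtain the claimed estimate with combined constant $T^{-1/2} e^{2LT}$.

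The main obstacle is the pointwise spatial Lipschitz step: naive Picard iteration of the SFPE introduces extra $\phi$-factors through the bound $\E[(V+V)(\phi+\phi)\|X^a-X^b\|] \leq \tfrac{1}{2}(V+V)(\phi+\phi)^2\|a-b\|$, so the Gronwall argument must be organized carefully, for instance via a suitably weighted iteration, so that the compounding $\phi$-factors are absorbed into the exponential factor $e^{LT}$ rather than multiplying the Lipschitz constant; this is what ultimately yields the clean first-power dependence on $(\phi(t,a)+\phi(t,b))$ in the stated bound.
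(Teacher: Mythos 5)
Your treatment of items (i) and (ii) is fine and is actually more self-contained than the paper, which simply cites \cite[Proposition~2.2]{HJKN20} for both; your decomposition of (iii) into a temporal piece (flow identity plus item (ii)) and a spatial piece (pointwise Lipschitz bound on $u(t,\cdot)$ followed by H\"older with all three exponents $p_1,p_2,p_3$) is also exactly the paper's architecture. The problem is that the one step you yourself single out as ``the main obstacle'' --- the pointwise spatial Lipschitz estimate $|u(t,a)-u(t,b)|\le T^{-1/2}e^{LT}(V(t,a)+V(t,b))(\phi(t,a)+\phi(t,b))\|a-b\|$ --- is left unresolved. Your proposed fix (``a suitably weighted iteration, so that the compounding $\phi$-factors are absorbed into the exponential factor $e^{LT}$'') does not work as described: if each unfolding of the recursion multiplies by an extra factor of $(\phi+\phi)$, summing the iterates produces something like $e^{LT(\phi+\phi)}$, i.e.\ an exponential in $\phi$, not the stated first power of $(\phi+\phi)$ times $e^{LT}$. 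So as written the proposal has a genuine gap precisely where the lemma is nontrivial.

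The idea that closes the gap is to avoid any sup over spatial points in the Gronwall recursion. Fix $(s,x,y)$ once and for all and study the scalar function $D(t):=\E\bigl[|u(t,X^x_{s,t})-u(t,X^y_{s,t})|\bigr]$ for $t\in[s,T]$. Plugging the fixed-point equation \eqref{k03b} into $u(t,a)-u(t,b)$ and then evaluating at $(a,b)=(X^x_{s,t},X^y_{s,t})$, the tower/flow property \eqref{b01k} collapses the nested expectations so that
\begin{equation*}
D(t)\;\le\;\E\bigl[|g(X^x_{s,T})-g(X^y_{s,T})|\bigr]+\int_t^T\E\bigl[|f(r,X^x_{s,r},u(r,X^x_{s,r}))-f(r,X^y_{s,r},u(r,X^y_{s,r}))|\bigr]\,dr .
\end{equation*}
Now \eqref{k05} and the spatial part of \eqref{k05b} are estimated by a single application of H\"older ($\tfrac1{p_1}+\tfrac1{p_2}\le1$) together with \eqref{k07} and \eqref{k04}, \emph{from the initial time $s$ directly to time $r$}; this produces the inhomogeneity $\tfrac{1}{\sqrt T}(V(s,x)+V(s,y))\cdot\tfrac12(\phi(s,x)+\phi(s,y))\|x-y\|$ with the $\phi$-factor appearing exactly once, while the only term that recurses is $L\int_t^T D(r)\,dr$ coming from the $L|v-w|$ part of \eqref{k05b}. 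Gronwall in $t$ then gives $D(t)\le \tfrac{1}{2\sqrt T}(V(s,x)+V(s,y))(\phi(s,x)+\phi(s,y))\|x-y\|\,e^{L(T-t)}$, and taking $t=s$ (using $\P(X^x_{s,s}=x)=1$ from \eqref{k04}) yields the pointwise spatial Lipschitz bound. With this in hand the rest of your argument for (iii) goes through as you describe.
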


\begin{proof}[Proof of \cref{k16}]
Observe that \cite[Proposition 2.2]{HJKN20} (applied with 
$ \mathcal{O}\defeq\R^d$
 in the notation of 
\cite[Proposition 2.2]{HJKN20})
 and \eqref{k07} prove items \eqref{k03} and \eqref{k15}.
Next note that 
 \eqref{k03b}, the triangle inequality, and
\eqref{b01k}
 show that for all
 $s\in[0,T]$, 
$t\in[s,T]$,
$x,y\in\R^d$ it holds that 
{\allowdisplaybreaks\begin{align}
&
\E\bigl[
|u(t,X^{x}_{s,t})-u(t,X^{y}_{s,t})|\bigr]
=\E\bigl[
|u(t,a)-u(t,b)||_{{(a,b)=(X_{s,t}^x,X_{s,t}^y)}}\bigr]
\nonumber \\
&=\E\!\left[\biggl|
\E\bigl[
g(X_{t,T}^{a})
-g(X_{t,T}^{b})
\bigr]+
\int_{t}^{T} \E\bigl[f(r,X_{t,r}^{a}, u(r,X_{t,r}^{a}))
-f(r,X_{t,r}^{b}, u(r,X_{t,r}^{b}))
\bigr]dr\biggr|\biggr|_{(a,b)=(X_{s,t}^x,X_{s,t}^y)}\right]\nonumber 
\\
&
\leq \E\!\left[
\E\!\left[
|
g(X_{t,T}^{a})
-g(X_{t,T}^{b})
|\right]\bigr|_{(a,b)=(X_{s,t}^x,X_{s,t}^y)}\right]\nonumber \\&\quad+
\int_{t}^{T} \E\!\left[\E\!\left[| f(r,X_{t,r}^{a}, u(r,X_{t,r}^{a}))
-f(r,X_{t,r}^{b}, u(r,X_{t,r}^{b}))|
\right]\bigr|_{(a,b)=(X_{s,t}^x,X_{s,t}^y)}\right]dr
\nonumber \\
&=\E\Bigl[ \bigl|g(X_{s,T}^x)-g(X_{s,T}^y)\bigr| \Bigr] +
\int_{t}^{T}
\E\Bigl[
\left|
f(r,X_{s,r}^{x}, u(r,X_{s,r}^{x}))
-f(r,X_{s,r}^{y}, u(r,X_{s,r}^{y}))\right|\Bigr]dr.
\end{align}
H\"older's inequality, \eqref{k05}, \eqref{k05b},
 the fact that
$\frac{1}{p_1}+\frac{1}{p_2}\leq 1$, \eqref{k04}, and \eqref{k07}
hence  demonstrate that for all
 $s\in[0,T]$, 
$t\in[s,T]$,
$x,y\in\R^d$ it holds that 
\begin{align}
&
\E\bigl[
|u(t,X^{x}_{s,t})-u(t,X^{y}_{s,t})|\bigr]
\nonumber
\\
&\leq \tfrac{1}{2\sqrt{T}}\,
\E\!\left[
(
 V(T,X_{s,T}^x)+V(T,X_{s,T}^y))
\|X_{s,T}^x-
X_{s,T}^y\|\right]
+\int_{t}^{T}L \,\E\!\left[|
u(r,X_{s,r}^x)-u(r,X_{s,r}^y)|\right] dr\nonumber \\
&\quad +\tfrac{ 1 }{ 2 T \sqrt{T} }\int_t^T
\E\!\left[
(V(r,X_{s,r}^x)+V(r,X_{s,r}^y))
\|X_{s,r}^x-X_{s,r}^y\|\right]dr\nonumber \\
&\leq \sup_{r\in [t,T]}\left[\tfrac{1}{\sqrt{T}}
\lp{
 V(r,X_{s,r}^x)+V(r,X_{s,r}^y)}_{p_1} 
\lp{\|X_{s,r}^x-
X_{s,r}^y\|}_{p_2}\right]\nonumber \\
&\quad 
+\int_{t}^{T}L \,\E\!\left[|
u(r,X_{s,r}^x)-u(r,X_{s,r}^y)|\right] dr\nonumber \\
&\leq  \left[
\tfrac{V(s,x)+V(s,y)}{\sqrt{T}}\right]\!
\left[
\tfrac{\phi(s,x)+\phi(s,y) }{2}
\right]\!
\|x-y\| +\int_{t}^{T}
L \, \E\!\left[|
u(r,X_{s,r}^x)-u(r,X_{s,r}^y)|\right] dr
.
\end{align}}%
This, item \eqref{k15}, \eqref{k07}, and Gronwall's lemma (see, e.g., \cite[Lemma~3.2]{hutzenthaler2019overcoming}) show that for all
 $s\in[0,T]$, 
$t\in[s,T]$,
$x,y\in\R^d$ it holds that
\begin{align}
\E\!\left[|u(t,X^{x}_{s,t})-u(t,X^{y}_{s,t})|\right]\leq \tfrac{1}{2\sqrt{T}}(V(s,x)+V(s,y) )
(\phi(s,x)+\phi(s,y) )
\|x-y\|e^{L (T-t)}.\label{k13b}
\end{align}
Moreover, observe that \eqref{k04} ensures that for all 
$t\in[0,T]$,
$x\in\R^d$ it holds that
$\P( \lVert X^x_{ t, t } - x\rVert=0 ) = 1$. Hence, we obtain that for all
$t\in[0,T]$,
$x\in\R^d$ it holds that
$\P( X^x_{ t, t } = x ) = 1$.
Combining this with \eqref{k13b} establishes that
for all 
$t\in[0,T]$,
$x,y\in\R^d$ it holds that
\begin{align}
\label{k06}
|u(t,x)-u(t,y)|\leq \tfrac{1}{2\sqrt{T}}(V(t,x)+V(t,y) )
(\phi(t,x)+\phi(t,y) )
\|x-y\|e^{L (T-t)}.
\end{align}
Next note that \eqref{k03b}, 
Fubini's theorem, and \eqref{b01k}  show that for all
 $s\in[0,T]$, 
$t\in[s,T]$,
$x\in\R^d$ it holds that
\begin{align}
u(s,x)-\E\! \left[u(t,X_{s,t}^x)\right]&=
\E\bigl[
g(X_{s,T}^x)\bigr]+
\int_{s}^{T} \E\bigl[f(r,X_{s,r}^x, u(r,X_{s,r}^x))
\bigr]\,dr\nonumber  \\
&\quad 
-
 \E\!\left[
\left[
\E\bigl[
g(X_{t,T}^{\tilde{x}})\bigr]+
\int_{t}^{T} \E\bigl[f(r,X_{t,r}^{a}, u(r,X_{t,r}^{\tilde{x}}))
\bigr]\,dr\right]\biggr|_{{a}=X_{s,t}^x}\right]\nonumber \\
&=\int_{s}^{t}
\E\bigl[f(r,X_{s,r}^x, u(r,X_{s,r}^x))
\bigr]dr.
\label{k01b}\end{align}
This, the triangle inequality, \eqref{k05b}, \eqref{k07}, and item \eqref{k15} demonstrate that  for all
 $s\in[0,T]$, 
$t\in[s,T]$,
$x\in\R^d$ it holds that
\begin{align}\begin{split}
&\left|
u(s,x)-
\E\! \left[u(t,X_{s,t}^x)\right]\right|
\leq |t-s| \left[\sup_{r\in[s,t]}\left(
\E\!\left[|
f(r,X_{s,r}^x, 0)|\right]+
 L
\E\!\left[|
u(r,X_{s,r}^x)|\right] \right)\right]\\
&
 \leq |t-s|\left[\sup_{r\in[s,t]}\left(
( \tfrac{1}{T}+2 L e^{LT})
\E\!
\left[ V(r,X_{s,r}^x)\right]\right)\right]\leq 
\Bigl[\tfrac{1+2T  L e^{LT}}{T}\Bigr] |t-s|V(s,x)
\\
&
\leq \tfrac{1}{\sqrt{T}}
(2+4LTe^{LT})\tfrac{1}{2}(V(s,x)+V(t,y))|t-s|^{\nicefrac{1}{2}}
.
\end{split}\label{k13c}\end{align}
Next observe that \eqref{k06}, H\"older's inequality, the fact that
$\frac{1}{p_1}+\frac{1}{p_2}+\frac{1}{p_3}\leq 1$,
the triangle inequality, \eqref{k07}, and \eqref{k04} prove that for all 
 $s\in[0,T]$, 
$t\in[s,T]$,
$x,y\in\R^d$ it holds that
\begin{align}
&
\left|\E\! \left[u(t,X_{s,t}^x)\right]-u(t,y)\right| 
\leq \E\! \left[|u(t,X_{s,t}^x)-u(t,y)| \right]\nonumber 
\\
&\leq \E\!\left[
 \tfrac{2 }{\sqrt{T}}\tfrac{1}{2}(V(t,X_{s,t}^x)+V(t,y))
\tfrac{1}{2}(\phi(t,X_{s,t}^x)+\phi(t,y))
\|X_{s,t}^x-y\|\right]e^{L(T-t)}\nonumber 
\\
&\leq 
\tfrac{2e^{L T}}{\sqrt{T}}
\tfrac{1}{2}\left(\lp{V(t,X_{s,t}^x)}_{p_1} +V(t,y) \right)\tfrac{1}{2}
\left(\lp{\phi(t,X_{s,t}^x)}_{p_3}+\phi(t,y) \right)
\lp{\|X_{s,t}^x-y\|}_{p_2}\nonumber  \\
&\leq 
\tfrac{1}{\sqrt{T}}2e^{L T}\tfrac{1}{2}(V(s,x)+V(t,y))
\tfrac{1}{2}(\phi(s,x)+\phi(t,y))
\left[\psi(s,x)|s-t|^{\nicefrac{1}{2}}+\|x-y\|\right].\label{k25}
\end{align}
This, the triangle inequality,  \eqref{k13c}, the fact that $\phi\ge 1$, the fact that $\psi\geq 1$,
and the fact that
$ 2+4LTe^{LT}+2e^{LT}\leq 4e^{LT}(1+LT) \leq 4e^{2LT}$
 show that for all 
 $s\in[0,T]$, 
$t\in[s,T]$,
$x,y\in\R^d$ it holds that
\begin{align}\begin{split}
&
|u(s,x)-u(t,y)|\leq \left|
u(s,x)-
\E\! \left[u(t,X_{s,t}^x)\right]\right|+\left|\E\! \left[u(t,X_{s,t}^x)\right]-u(t,y)\right|\\
&\leq\tfrac{1}{\sqrt{T}}4 e^{2 L T}\tfrac{1}{2}(V(s,x)+V(t,y))
\tfrac{1}{2}(\phi(s,x)+\phi(t,y))
\left[\psi(s,x)|s-t|^{\nicefrac{1}{2}}+\|x-y\|\right].
\end{split}\end{align}
This proves item \eqref{k20b}. The proof of \cref{k16} is thus complete.
\end{proof}


\section{Error analysis in H\"older seminorms for full-history recursive multilevel Picard (MLP) approximations}\label{sec:compltwoplus}

\newcommand{\bbM}{R}%
\renewcommand{\intOp}{\mathscr{L}}
\renewcommand{\thetaBar}{\theta}
\renewcommand{\FEU}[1]{\mathcal{C}_{#1}}
\renewcommand{\FEY}[1]{{\mathfrak{C}}_{#1}}
\newcommand{\FEYT}[1]{\widehat{\mathcal{C}}_{#1}}
\renewcommand{\F}{\mathbb{F}}
\newcommand{\luckyCst}{\ensuremath{49}}
\newcommand{\luckyCstT}{\ensuremath{50}}
\newcommand{\threenorm}[2]{{\left\vert\kern-0.25ex\left\vert\kern-0.25ex\left\vert #1     \right\vert\kern-0.25ex\right\vert\kern-0.25ex\right\vert}_{#2}}
\newcommand{\funcEps}[2]{\mathcal{E}^{(#1)}_{#2,M}}
\newcommand{\funcEpsT}[1]{\mathcal{E}_{#1,M}}
\newcommand{\capG}{\mathbf{g}}
\newcommand{\capA}{{\mathbf{a}}_M}
\newcommand{\capB}{\mathbf{b}_M}
\newcommand{\capV}{\mathbf{v}_M}
\newcommand{\barV}{\mathbb{V}}
\newcommand{\funcLambda}[2]{\Lambda(#1,#2)}
\newcommand{\cstMD}{ c }

In \cref{t26} in \cref{sec:compcompl} below we supply a computational complexity analysis for the Monte Carlo-type approximation algorithm for BSDEs in \eqref{eq:mlp_intro}--\eqref{eq:approx_intro} in \cref{h01} in \cref{sec:intro} above. Our proof of \cref{t26} exploits the multi-grid approximation result in \cref{n002} in \cref{sec:2} above as well as the error analysis for appropriate MLP approximations in \cref{m02} in this section. Specifically, in \cref{m02} below we establish upper bounds for appropriate H\"older seminorms of the difference between solutions of SFPEs and suitable MLP approximations for such SFPEs.

\begin{setting}\label{m01}
Let  $T \in (0,\infty)$, $L,\rho\in [0,\infty)$,  $\beta\in(0,\nicefrac{1}{12}]$, 
$d\in \N$,
$f\in C( [0,T]\times \R^{d}\times\R,\R)$, ${g}\in C( \R^d,\R)$, 
let
$V\colon \R^d\to[1,\infty)$ be
measurable,
let $\lVert \cdot \rVert \colon \R^d\to[0,\infty)$ be a norm,
let 
$  \Theta = \bigcup_{ n \in \N }\! \Z^n$,
let
$(\Omega, \mathcal{F}, \P, (\F_t)_{t\in[0,T]})$ 
be a filtered probability space,
let $\unif^\theta\colon \Omega\to[0,1]$, $\theta\in \Theta$, 
be i.i.d.\ random variables, assume for all $t\in (0,1)$ that 
$\P(\unif^0\le t)=t$,
let $\stdNormal^\theta\colon  \Omega\to\R^d$, $\theta\in\Theta$,
be i.i.d.\ standard normal random vectors,
let $W=(W^1,W^2,\ldots,W^d)\colon [0,T]\times\Omega\to\R^d$ be a standard 
$(\F_t)_{t\in[0,T]}$-Brownian motion,
assume that 
$
(\unif^\theta)_{\theta\in\Theta}$, $
(\stdNormal^\theta)_{ \theta\in\Theta}$, and
$W$
 are independent,
let $X_{s,t}^\theta \colon \R^d \times \Omega \to \R^d$, $s,t\in[0,T]$, $\theta\in\Theta$, satisfy for all $s,t\in[0,T]$, $x\in\R^d$, $\theta\in\Theta$ that 
$
X_{s,t}^\theta(x)=x+\lvert t-s\rvert^{1/2}\stdNormal^\theta
$,
let
${F}\colon\R^{[0,T]\times\R^d}\to\R^{[0,T]\times\R^d}$ satisfy for all $t\in [0,T]$, $x\in \R^d$, $v\in \R^{[0,T]\times\R^d}$ that $(F(v))(t,x)= f(t,x,v(t,x))$, 
let
$ 
  {{U}}_{ n,M}^{\theta } \colon [0, T] \times \R^d \times \Omega \to \R
$, $n,M\in\Z$, $\theta\in\Theta$, satisfy
for all $M \in \N$, $n\in \N_0$, $\theta\in\Theta $, 
$ t \in [0,T]$, $x\in\R^d $
 that 
\begin{align}
&  {{U}}_{n,M}^{\theta}(t,x)
=
  \frac{\1_{\N}(n)}{M^n}
 \sum_{i=1}^{M^n} 
      {g} \bigl({X}^{(\theta,0,-i)}_{t,T}(x)\bigr)
 \label{t27} \\
& +
  \sum_{\ell=0}^{n-1}\left[ \frac{(T-t)}{M^{n-\ell}}
    \sum_{i=1}^{M^{n-\ell}}
      \bigl({F}\bigl({{U}}_{\ell,M}^{(\theta,\ell,i)}\bigr)-\1_{\N}(\ell){F}\bigl( {{U}}_{\ell-1,M}^{(\theta,-\ell,i)}\bigr)\bigr)
      \bigl(t+(T-t)\unif^{(\theta,\ell,i)},{X}_{t,t+(T-t)\unif^{(\theta,\ell,i)}}^{(\theta,\ell,i)}(x)\bigr)
    \right],\nonumber
\end{align}
and
assume for all $s,t\in[0,T]$, $x,y\in\R^d$,
 $v_1,v_2,w_1,w_2\in\R$  that
{\allowdisplaybreaks
\begin{gather}
%
\begin{split}
&
\max\{|f(s,x,v_1)-f(t,y,v_2)|,T^{-1}
|g(x)-g(y)|\}\\
&\leq T^{-\nicefrac{3}{2}} |{V}(x)+{V}(y)|^{\beta}\left(|s-t|^{\nicefrac{1}{2}}+\|x-y\|\right)+L |v_1-v_2|,\label{t02b}
\end{split}
\\[5pt]
\label{m11b}\begin{split}
&\left|\left[f(s,x,{v_1})-f(s,x,{w_1})\right] 
-
\left[f(t,y,{v_2})-f(t,y,{w_2})\right]\right|\leq L \left|({v_1}-{w_1})-({v_2}-{w_2})\right|\\
&\quad
+
T^{-\nicefrac{3}{2}}|{V}(x)+{V}(y)|^{\beta}\bigl[
\left(\max\!\left\{\E\!\left[\|\stdNormal^0\|^4\right],1\right\}\right)^{\!\nicefrac{1}{4}}
|s-t|^{\nicefrac{1}{2}}+\|x-y\|\bigr]|{v_1}-{w_1}|
\\&\quad +T^{-1}|{V}(x)+{V}(y)|^{\beta}\left(|{v_1}-{w_1}|+|{v_2}-{w_2}|
\right)\left|{w_1}-{w_2}\right|,
\end{split}
\end{gather}}%
and
$\max\{|Tf(t,x,0)|^{\nicefrac{1}{\beta}},|g(x)|^{\nicefrac{1}{\beta}}, \E[V(X_{s,t}^0(x))]\}\leq e^{\rho\lvert t-s\rvert }V(x)$.
\end{setting}

\begin{proposition}\label{m02}
Assume \cref{m01}, let 
$\cstMD\in \R $ satisfy 
$
\cstMD= (\max\{\E [\|\stdNormal^0\|^4],1\})^{\nicefrac{1}{4}}
$, and
let $\barV\colon [0,T]\times\R^d\to \R $ satisfy 
for all $t\in[0,T]$,  $x\in\R^d$ that $\barV(t,x)= e^{\rho (T-t)}V(x)$.
Then
\begin{enumerate}[(i)]
\item   \label{t28c}
 there exists a unique 
measurable 
$u\colon  [0,T]\times\R^d\to\R$ 
which satisfies
for all   $t\in[0,T]$, $x\in\R^d$  that
$
\E\bigl[|{g}(X^0_{t,T}(x))|\bigr]+\int_t^T\E\bigl[| ({F}(u))(s,X^0_{t,s}(x))| \bigr]\,ds +
\sup_{r\in[0,T],y\in\R^d}
\bigl(
\frac{|u(r,y)|}{|V(y)|^\beta}\bigr)<\infty$ and
\begin{align}
\begin{split}
u(t,x)=\E\!\left[{g}(X^0_{t,T}(x))\right]+\int_t^T
\E\!
\left[ \left({F}(u)\right)\!\left(s,X^0_{t,s}(x)\right)  \right]ds,
\end{split}
\end{align} 
\item \label{t33}
it holds for all $\theta\in\Theta$, $n\in\N_0$, $M\in\N$ that $U_{n,M}^\theta$ is measurable,
\item\label{k21b} it holds
 for all
$M\in\N$, $N\in\N_0$ that 
\begin{equation}
\sup_{s\in [0,T]} \sup_{x\in\R^d}\left[ \frac{\left(\E\!\left[|{U}_{N,M}^{0}(s,x)-u(s,x)|^2\right]\right)^{\nicefrac{1}{2}}}{(\barV(s,x))^{\beta}}\right]\le 
e^{M/2}M^{-N/2}\left(\luckyCstT e^{2LT}\right)^{N+1},
\end{equation}
and 
\item\label{k21bb}
it holds for all
$M\in\N$, $N\in\N_0$ that 
\begin{multline}
\sup_{\substack{s,t\in [0,T],\\ s\neq t}}
\sup_{\substack{x,y\in\R^d,\\x\neq y}}
\left[\frac{T^{\nicefrac{1}{2}}\left(\E\!\left[|[{U}_{N,M}^{0}(s,x)-u(s,x)]-[{U}_{N,M}^{0}(t,y)-u(t,y)]|^2\right]\right)^{\nicefrac{1}{2}}}{
\bigl[
\cstMD
|s-t|^{\nicefrac{1}{2}}+\|x-y\|\bigr]\bigl(\barV(s,x)+\barV(t,y)\bigr)^{\nicefrac{1}{4}}}\right]\\
\le 
e^{M/2}M^{-N/2}\left(\luckyCstT e^{2LT}\right)^{N+1}.
\end{multline}
\end{enumerate}
\end{proposition}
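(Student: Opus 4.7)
The plan is to dispatch items~\eqref{t28c} and~\eqref{t33} as preliminaries and then to prove items~\eqref{k21b} and~\eqref{k21bb} by a coupled induction on $N\in\N_0$. For item~\eqref{t28c} I would apply \cref{k16} to the family $(X^0_{s,t}(x))$ with the Lyapunov function $(t,x)\mapsto \kappa |V(x)|^{\beta}e^{\rho(T-t)}$ for a suitable $\kappa\in[1,\infty)$: the Gaussian form $X^0_{s,t}(x)=x+|t-s|^{1/2}\stdNormal^0$ makes the Markov/flow identity~\eqref{b01k} immediate, the Lipschitz bound~\eqref{t02b} yields~\eqref{k05}--\eqref{k05b}, and the hypothesis $\E[V(X^0_{s,t}(x))]\le e^{\rho|t-s|}V(x)$ together with Jensen supplies the integrability~\eqref{k07}. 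Item~\eqref{t33} is a routine induction on $n$: $U_{0,M}^\theta\equiv 0$ is measurable, and the right-hand side of~\eqref{t27} is a finite sum of compositions of measurable functions of the i.i.d.\ input.

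For items~\eqref{k21b}--\eqref{k21bb} I would rewrite the fixed-point identity from~\eqref{t28c} in the same multilevel telescoping form as~\eqref{t27}; subtracting from $U_{N,M}^0(t,x)$ yields the decomposition
\begin{equation*}
U_{N,M}^0(t,x)-u(t,x)=\mathcal B_{N,M}(t,x)+\mathcal M_{N,M}(t,x),
\end{equation*}
where $\mathcal B_{N,M}$ is a deterministic bias built from $\E[F(U_{\ell,M}^{\cdot})-F(U_{\ell-1,M}^{\cdot})-F(u)+F(u)]$ terms at levels $\ell<N$ and $\mathcal M_{N,M}$ is the centred Monte Carlo fluctuation. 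Because the i.i.d.\ copies $(\stdNormal^{(\theta,\ell,i)},\unif^{(\theta,\ell,i)})$ indexing different samples are mutually independent, conditioning on the lower-level approximations gives an orthogonal decomposition and the usual variance-of-an-average bound $\operatorname{Var}(\tfrac{1}{M^{N-\ell}}\sum_i\cdots)\le M^{-(N-\ell)}\E[|\cdot|^2]$. The bias is controlled by the Lipschitz estimate~\eqref{t02b} applied entry-wise in the telescope, and the quadratic (second-order) structure of the tamer condition~\eqref{m11b} guarantees that $\|F(U_{\ell,M})-F(U_{\ell-1,M})-F(u)+F(u)\|_{2}$ is controlled by the \emph{product} of two lower-level $L^2$-errors plus a linear remainder; this is the mechanism which produces the $M^{-N/2}$ geometric decay rather than the blow-up that a naive Gronwall argument would give. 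Summing over $\ell$, integrating in time, and absorbing the Lyapunov weight $|V|^\beta$ via $\E[V(X^0_{s,t}(x))]\le e^{\rho|t-s|}V(x)$ then gives the induction step for~\eqref{k21b} at constant $\luckyCstT e^{2LT}$.

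To propagate~\eqref{k21bb} through the induction, I would evaluate the same decomposition at two points $(s,x)$ and $(t,y)$ and bound the $L^2$-norm of the difference. The Brownian structure $X^\theta_{s,t}(x)-X^\theta_{s',t'}(y)=(x-y)+(|t-s|^{1/2}-|t'-s'|^{1/2})\stdNormal^\theta$ produces the kernel factor $\cstMD|s-t|^{1/2}+\|x-y\|$ after moment estimates using $(\E[\|\stdNormal^0\|^4])^{1/4}$, matching exactly the denominator in~\eqref{k21bb}. The Hölder-weighted Lipschitz term in~\eqref{m11b} (with the $|V(x)+V(y)|^\beta$ weight and the factor $(\E[\|\stdNormal^0\|^4])^{1/4}$ already built in) is designed so that the Hölder seminorms of the $F$-differences can be bounded by the level-wise Hölder seminorm of the error plus the level-wise $L^2$-error, closing the induction jointly with~\eqref{k21b}. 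The power $\nicefrac{1}{4}$ on $\barV(s,x)+\barV(t,y)$ in~\eqref{k21bb} emerges because a Hölder-weighted Lipschitz application multiplies two $\beta$-powers and the flow estimate then pushes $2\beta\le\nicefrac{1}{6}<\nicefrac{1}{4}$ through the Gaussian moment.

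The main obstacle, and the reason the coupled induction is needed, is that the Hölder seminorm bound~\eqref{k21bb} feeds back into the bias estimate for~\eqref{k21b} through the second line of~\eqref{m11b}: each application of the tamer inequality produces a mixed term of the form $|V(x)+V(y)|^{\beta}[\cdots]|v_1-w_1|$ in which $v_1-w_1$ is itself a Hölder-type increment of a lower-level MLP error. The constraint $\beta\le\nicefrac{1}{12}$ is exactly what is needed so that repeated composition of the Lyapunov weight under the Gaussian flow (which multiplies the $\beta$-exponent by a factor bounded by $e^{\rho|t-s|}$ but requires enough moments of $V(X^0_{s,t}(x))$ to stay finite after three Hölder applications) closes at exponent $\nicefrac{1}{4}$ in~\eqref{k21bb}. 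The rest is bookkeeping: summing the geometric series $\sum_{\ell}M^{-(N-\ell)/2}\cdot\text{const}^{\ell}$ and verifying that the resulting constant fits inside $e^{M/2}(\luckyCstT e^{2LT})^{N+1}$, which is a Stirling-type calculation once the recursion is set up.
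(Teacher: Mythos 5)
Your high-level plan is aligned with the paper's proof -- apply \cref{k16} with a Lyapunov function proportional to $\barV^\beta$ for item~\eqref{t28c}, quote measurability routines for item~\eqref{t33}, and then run a bias--variance multilevel decomposition to close items~\eqref{k21b} and~\eqref{k21bb} jointly in the two coupled seminorms. (One minor slip: the Lyapunov function you propose, $\kappa|V(x)|^\beta e^{\rho(T-t)}$, carries the exponential at power $1$ rather than at power $\beta$; the paper uses $2\barV^\beta = 2e^{\beta\rho(T-t)}|V(x)|^\beta$, and the $\beta$ in the exponent is what makes the $\lp{\cdot}_{1/\beta}$-contraction in \eqref{k07} go through.)

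However, there is a genuine conceptual gap in how you propose to extract the rate $e^{M/2}M^{-N/2}(\luckyCstT e^{2LT})^{N+1}$ from the recursion, and your stated mechanism would not work. You claim that the ``quadratic (second-order) structure'' of \eqref{m11b} produces the $M^{-N/2}$ decay and avoids ``the blow-up that a naive Gronwall argument would give.'' This is backwards on two counts. First, the product term in \eqref{m11b} is applied with $(v_1,w_1)=(U_{n,M}^0(s,x),u(s,x))$ and $(v_2,w_2)=(U_{n,M}^0(t,y),u(t,y))$, so the factor $|w_1-w_2|=|u(s,x)-u(t,y)|$ is the H\"older increment of the \emph{fixed} solution $u$, which item~\eqref{k20b} of \cref{k16} bounds uniformly -- the resulting estimate \eqref{m13} is therefore \emph{linear} in the MLP error seminorms, not quadratic; no ``product of two lower-level errors'' ever appears. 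Second, the paper's recursion \eqref{k22} has, on the right-hand side, a \emph{time integral} $\bigl[\int_r^T\threenorm{U^0_{\ell,M}-u}{0,\zeta}^2\,d\zeta\bigr]^{1/2}$ rather than a pointwise supremum; this is produced by the change of variables \eqref{b04} for the uniform random time $\unif^0$. That time integral is essential: the $\ell=n-1$ bias term has no Monte Carlo damping (it sits at $M^{n-\ell-1}=M^0=1$ in \eqref{k22}) and comes with a prefactor $\luckyCst T^{-1/2}e^{2LT}>1$, so the ``geometric series'' you describe would diverge in $N$. What actually closes the estimate is exactly the discrete Gronwall lemma \cite[Lemma~3.10]{HJKN20}: iterating the integral recursion produces a $1/\sqrt{k!}$ factor, and the balance $\sup_{k}M^{-(N-k)/2}/\sqrt{k!}\le e^{M/2}M^{-N/2}$ is where the $e^{M/2}$ in the claimed bound comes from -- this is not ``bookkeeping'' but the mechanism. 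Until you identify the time-integral structure of the recursion and invoke the corresponding Gronwall-type lemma, the proof does not close.
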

\begin{proof} [Proof of \cref{m02}]
Throughout this proof 
let 
$\Lambda\colon [0,1]\times[0,T]\to[0,T] $ 
 satisfy for all $t\in[0,T]$, $\lambda\in [0,1]$ that
$
\funcLambda{\lambda}{t}= t+ \lambda(T-t)
$,
for every $q\in [1,\infty)$ and every random variable 
$\mathfrak{X}\colon\Omega\to\R$
let $\lp{\mathfrak{X}}_{q} \in [0,\infty]$
satisfy that $\lp{X}_{q}= (\E[|\mathfrak{X}|^q])^{1/q}$,
and 
for every $r\in [0,T]$ and
 every random field
$H\colon [0,T]\times \R^d\times \Omega\to\R $ let 
$\threenorm{H}{k,r}\in [0,\infty]$, $k\in \{0,1,2\}$, satisfy
\begin{align}\label{m22}
\begin{split}
\threenorm{H}{0,r}&=\max _{j\in\{1,2\}}\threenorm{H}{j,r},\qquad 
\funcN{H}{r}= \sup_{x\in\R^d,s\in [r,T]}\left[ \frac{\left(\E\!\left[|H(s,x)|^2\right]\right)^{\nicefrac{1}{2}}}{(\barV(s,x))^{\beta}}\right],
\\
\text{and} \qquad
\funcH{H}{r}&=\sup_{\substack{s,t\in [r,T],x,y\in\R^d\\(s,x)\neq(t,y)}}\left[\frac{T^{\nicefrac{1}{2}}\left(\E\!\left[|H(s,x)-H(t,y)|^2\right]\right)^{\nicefrac{1}{2}}}{
\bigl[
\cstMD
|s-t|^{\nicefrac{1}{2}}+\|x-y\|\bigr]\bigl(\barV(s,x)+\barV(t,y)\bigr)^{\nicefrac{1}{4}}}\right].
\end{split}\end{align}
Observe that \eqref{m22} ensures that for all $j\in\{0,1,2\}$, $r\in[0,T]$, $\lambda,\mu\in\R$ and all random fields 
$H_k\colon [0,T]\times\R^d\times\Omega\to\R$, $k\in\{1,2\}$,
it holds that
\begin{align}
\threenorm{\lambda H_1+\mu H_2}{j, r}\leq 
|\lambda|
\threenorm{ H_1 }{j, r}+
|\mu| \threenorm{ H_2 }{j, r}
.
\label{m22b}
\end{align}
Moreover, note that \eqref{m22} assures that for all $j\in \{0,1,2\}$ and all random fields $H\colon [0,T]\times \R^d\times \Omega\to\R $ it holds that $[0,T]\ni r\mapsto \threenorm{H}{j,r}\in [0,\infty]$ is non-increasing. This shows that for all $j\in \{0,1,2\}$ and all random fields $H\colon [0,T]\times \R^d\times \Omega\to\R $ it holds that $[0,T]\ni r\mapsto \threenorm{H}{j,r}\in [0,\infty]$ is measurable. 
Next observe that Jensen's inequality and
the fact that for all $t\in[0,T]$, $x\in\R^d$ it holds that $\barV(t,x)= e^{\rho (T-t)}V(x)$
show that  for all 
$\gamma\in [0,1]$,
$s\in[0,T]$, $t\in[s,T]$, $x\in\R^d$ it holds  that
\begin{align}\begin{split}
&
\E\!\left[({{\barV}}(t,X_{s,t}^0(x)))^\gamma \right] 
\leq 
\left(\E\!\left[{{\barV}}(t,X_{s,t}^0(x)) \right]\right)^\gamma\leq 
\left(
e^{\rho(T-t)}
e^{\rho(t-s)}V(x)\right)^{\gamma}\leq 
 ({{\barV}}(s,x))^\gamma.\label{t10}
\end{split}\end{align}
 Combining this,
the fact that
$0< \beta\leq 3/4$, the fact that $\cstMD\geq 1$, 
\eqref{t02b},   and
\cref{k16} (applied with 
$p_1\defeq 1/\beta$,
$p_2\defeq 4$, $p_3\defeq\infty$,
$\phi\defeq (\R^d\ni x\mapsto 1\in [1,\infty))$,
$V\defeq 2\barV^\beta$,
$ \psi \defeq 
(\R^d\ni x\mapsto \cstMD\in [1,\infty))
$,
$ (X_{s,t}^x(\omega))_{(s,t,x,\omega)\in \{(\mathfrak s, \mathfrak t)\in [0,T]^2\colon \mathfrak s\le \mathfrak t\}\times \R^d \times \Omega}
\defeq (X _{s,t}^0(x,\omega))_{(s,t,x,\omega)\in \{(\mathfrak s,\mathfrak t)\in [0,T]^2\colon \mathfrak s\le \mathfrak t\}\times \R^d \times \Omega}$
in the notation of \cref{k16}) 
 implies that \begin{enumerate}[(a)]
\item 
there exists a unique 
measurable 
$u\colon  [0,T]\times\R^d\to\R$ 
which satisfies
for all   $s\in[0,T]$, $x\in\R^d$  that
$
\E\bigl[|{g}(X^0_{s,T}(x))|\bigr]+\int_s^T\E\bigl[| ({F}(u))(t,X^0_{s,t}(x))| \bigr]dt +
\sup_{r\in[0,T],y\in\R^d}
\bigl(
\frac{|u(r,y)|}{|\barV(r,y)|^\beta}
\bigr)
 <\infty$ and
\begin{align}
\begin{split}
u(s,x)=\E\!\left[{g}(X^0_{s,T}(x))\right]+\int_s^T
\E\! \left[\left({F}(u)\right)\!\left(t,X^0_{s,t}(x)\right) \right]ds ,\label{t27b}
\end{split}
\end{align}
   \item it holds for all  $t\in[0,T]$, $x\in\R^d$  that
\begin{align}\label{m23b}
|u(t,x)|\leq 2 e^{LT}2(\barV(t,x))^\beta=
4e^{LT}(\barV(t,x))^{\beta},
\end{align}
and
\item it holds for all  $s,t\in[0,T]$, $x,y\in\R^d$ that
\begin{align}\label{m23}
\begin{split}
|u(s,x)-u(t,y)|&\leq \tfrac{1}{\sqrt{T}}4 e^{2 L T}\tfrac{1}{2}\left(2(\barV(s,x))^\beta+2(\barV(t,y))^\beta\right)
\left[\cstMD|s-t|^{\nicefrac{1}{2}}+\|x-y\|\right]\\
&\leq \tfrac{1}{\sqrt{T}} 8 e^{2 L T}(\barV(s,x)+\barV(t,y))^\beta
\left[\cstMD|s-t|^{\nicefrac{1}{2}}+\|x-y\|\right].
\end{split}
\end{align}

\end{enumerate}
This establishes item \eqref{t28c}. 
Next observe that, e.g.,
\cite[Lemmas 3.2--3.4]{HJKN20}, the fact that
$\stdNormal^\theta$, $\theta\in\Theta$, are independent,
and item \eqref{t28c}
show that 
\begin{enumerate}[(A)]
\item\label{t34} it holds 
for all $n \in \N_0$, $M \in \N$, $\theta\in\Theta $ that
$
  {U}_{ n,M}^{\theta }
$  and 
$
F(  {U}_{ n,M}^{\theta })
$ 
are 
measurable,
\item 
it holds
for all $n, m \in \N_0$, $M \in \N$, $i,j,k,\ell \in \Z$, $\theta \in \Theta$ with $(i,j) \neq (k,l)$ 
that
$
U^{(\theta,i,j)}_{n,M}
$,
$  
  U^{(\theta,k,\ell)}_{m,M}
$,
$ \unif^{(\theta,i,j)}$, and $X^{(\theta,i,j)}$
are independent,
\item it holds 
for all 
$M,n\in\N$,
$t\in[0,T]$, $x\in\R^d$ 
that $U_{n,M}^\theta(t,x)$, $ \theta\in\Theta$, are identically distributed,
\item it holds for all 
$t\in[0,T]$, $x\in\R^d$  that ${g} ({X}^{\theta}_{t,T}(x))$, $ \theta\in\Theta$, are i.i.d., 
\item it holds for all $M,n\in\N$, $t\in[0,T]$, $x\in\R^d$ that
 $\E[|U^0_{n,M}(t,x)|+|{g} ({X}^{0}_{t,T}(x))|]<\infty$,
\item it holds
for all  $\ell\in \N_0$, 
$M\in\N$
 that  
$\bigl((T-t)  ({F}({{U}}_{\ell,M}^{(0,\ell,i)})-\1_{\N}(\ell){F}( {{U}}_{\ell-1,M}^{(0,-\ell,i)}))
      (t+(T-t)\unif^{(0,\ell,i)},{X}_{t,t+(T-t)\unif^{(0,\ell,i)}}^{(0,\ell,i)}(x))\bigr)_{(t,x)\in[0,T]\times\R^d}$,
$ i\in\N$, are i.i.d.,
\item it holds
for all  $\ell\in \N_0$, 
$M\in\N$, $t\in[0,T]$, $x\in\R^d$ that
\begin{align}
\E\Bigl[ \bigl| ({F}({{U}}_{\ell,M}^{(0,\ell,1)})-\1_{\N}(\ell){F}( {{U}}_{\ell-1,M}^{(0,-\ell,1)}))
      (t+(T-t)\unif^{(0,\ell,1)},{X}_{t,t+(T-t)\unif^{(0,\ell,1)}}^{(0,\ell,1)}(x))\bigr|\Bigr]<\infty,
\end{align} 
 \item it holds
for all 
$M,n\in\N$,
$t\in[0,T]$, $x\in\R^d$ 
 that
\begin{align}\label{t31a}
\E\Bigl[\bigl|(T-t)(F(U_{n-1.M}^{0}-F(u))(t+\unif^{0}(T-t),X_{t,t+\unif^{0}(T-t)}^{0}(x))\bigr|\Bigr]<\infty
\end{align}
\item it holds
for all 
$M,n\in\N$,
$t\in[0,T]$, $x\in\R^d$ 
 that
\begin{align}\begin{split}
&\E\!\left[U^0_{n,M}(t,x)\right]-u(t,x)\\
&
=\E\bigl[(T-t)(F(U_{n-1.M}^{0})-F(u))(t+\unif^{0}(T-t),X_{t,t+\unif^{0}(T-t)}^{0}(x))\bigr],
\end{split}\label{t31}\end{align}
and
\item\label{t35} it holds for all $M\in\N$, $n\in\N_0$
that $
U^{0}_{n,M}$, $X^0$, and $\unif^0$ are independent. 
\end{enumerate}
This establishes \eqref{t33}.
Next note that \eqref{m22}, 
\eqref{m23b},
\eqref{m23}, and the fact that for all $M\in \N$ it holds that $U_{0,M}^0=0$
show that for all $r\in[0,T]$, $M\in\N$ it holds that
\begin{align}
\threenorm{U_{0,M}^0-u}{0,r}=
\threenorm{u}{0,r}=\max_{j\in\{1,2\}}
\threenorm{u}{j,r}\leq \max\! \left\{4e^{LT}, 8e^{2LT}\right\}= 8 e^{2LT}.
\label{k10}
\end{align}%
Furthermore, observe that \eqref{t02b}, H\"older's inequality, the triangle inequality,
\eqref{t10}, 
the fact that 
$0\le 4\beta\le 1$, and the fact that 
$V\geq 1$
imply that for all $r\in[0,T]$, $s,t\in[r,T]$, $x,y\in\R^d$ it holds that
\begin{align}
&\lp{g(X^0_{s,T}(x))-g(X^0_{t,T}(y))}_{2}
\leq \tfrac{1}{\sqrt{T}} \blp{\left(\barV(T,X^0_{s,T}(x))+\barV(T,X^0_{t,T}(y))\right)^{\beta}\left\|X^0_{s,T}(x)-X^0_{t,T}(y)\right\|}_{2}\nonumber \\
&
\leq \tfrac{1}{\sqrt{T}}
\left(\E\!\left[\left|\barV(T,X^0_{s,T}(x))+\barV(T,X^0_{t,T}(y))\right|^{4\beta}\right]\right)^{\nicefrac{1}{4}}
\blp{\left\|X^0_{s,T}(x)-X^0_{t,T}(y)\right\|}_{4}\nonumber 
\\
&\leq \tfrac{1}{\sqrt{T}} (\barV(s,x)+\barV(t,y))^{\nicefrac{1}{4}}\left[\cstMD|t-s|^{\nicefrac{1}{2}}+\|x-y\|
\right].
\end{align}%
This and \eqref{m22} assure that for all $r\in[0,T]$ it holds that
\begin{align}
\funcH{[0,T]\times\R^d\times\Omega\ni
(s,x,\omega)\mapsto
g(X^0_{s,T}(x,\omega))\in\R}{r}\leq 1.\label{b01}
\end{align}
Next note that \eqref{t02b} and \eqref{m22} show that for all $r\in[0,T]$, $n\in\N_0$, $M\in \N$ it holds that
\begin{align}\label{m14}
\threenorm{
F(U_{n,M}^{0})-F(u)}{1,r}\leq L \threenorm{U_{n,M}^{0}-u}{1,r}.
\end{align}
In addition, observe that
\eqref{m11b}, the triangle inequality,
\eqref{m22}, \eqref{m23},
the fact that $V\geq1$, and the fact that $0\leq 3\beta\leq 1/4$
imply that  for all $r\in[0,T]$, $s,t\in[r,T]$, $x,y\in\R^d$, $n\in\N_0$, $M\in \N$
 it holds that
{%
\allowdisplaybreaks%
\begin{align}
&
\blp{[F(U_{n,M}^{0})-F(u)](s,x)-[F(U_{n,M}^{0})-F(u)](t,y)}_{\lpspace{2}}
\nonumber \\
&
=\blp{\left[f(s,x,{U_{n,M}^{0}(s,x)})-f(s,x,{u(s,x)})\right] 
-
\left[f(t,y,{U_{n,M}^{0}(t,y)})-f(t,y,{u(t,y)})\right]}_{\lpspace{2}}\nonumber \\
&
\leq L \blp{({U_{n,M}^{0}(s,x)}-{u(s,x)})-({U_{n,M}^{0}(t,y)}-{u(t,y)})}_{\lpspace{2}}\nonumber \\&\quad+\tfrac{1}{T\sqrt{T}}({\barV}(s,x)+{\barV}(t,y))^{\beta} \bigl[\cstMD|t-s|^{\nicefrac{1}{2}}+\|x-y\|\bigr]\lp{{U_{n,M}^{0}(s,x)}-{u(s,x)}}_{\lpspace{2}}\nonumber \\
&\quad+\tfrac{1}{T}({\barV}(s,x)+{\barV}(t,y))^{\beta} \bigl[\lp{{U_{n,M}^{0}(s,x)}-{u(s,x)}}_{\lpspace{2}}+\lp{{U_{n,M}^{0}(t,y)}-{u(t,y)}}_{\lpspace{2}}
\bigr]\left|{u(s,x)}-{u(t,y)}\right|
\nonumber \\
&\leq L\funcH{U_{n,M}^{0}-u}{r} \tfrac{1}{\sqrt{T}}\bigl[\cstMD|t-s|^{\nicefrac{1}{2}}+\|x-y\|\bigr]
(\barV(s,x)+\barV(t,y))^{\nicefrac{1}{4}}\nonumber \\
&\quad+\tfrac{1}{T\sqrt{T}}({\barV}(s,x)+{\barV}(t,y))^{\beta}\bigl[\cstMD|t-s|^{\nicefrac{1}{2}}+\|x-y\|\bigr]\funcN{U_{n,M}^{0}-u}{r}
(\barV(s,x))^{\beta}\nonumber \\
&\quad+\tfrac{1}{T}({\barV}(s,x)+{\barV}(t,y))^{\beta}
\funcN{U_{n,M}^{0}-u}{r}\left[(\barV(s,x))^{\beta}+(\barV(t,y))^{\beta}\right]
\nonumber \\&\quad \cdot\tfrac{8 e^{2LT}}{\sqrt{T}} \bigl[\cstMD|t-s|^{\nicefrac{1}{2}}+\|x-y\|\bigr](\barV(s,x)+\barV(t,y))^{\beta}
\nonumber \\
&\leq \left[\tfrac{1}{\sqrt{T}}L\funcH{U_{n,M}^{0}-u}{r}+\tfrac{1}{T\sqrt{T}}(1+16 e^{2LT})\funcN{U_{n,M}^{0}-u}{r}\right]\nonumber \\&\quad\cdot\left[\cstMD|t-s|^{\nicefrac{1}{2}}+\|x-y\|\right](\barV(s,x)+\barV(t,y))^{\nicefrac{1}{4}}.
\end{align}}%
Combining this and \eqref{m22} shows for all $r\in[0,T]$, $n\in\N_0$, $M\in \N$ that
\begin{align}\label{m13}\threenorm{
F(U_{n,M}^{0})-F(u)}{2,r}\leq L \threenorm{U_{n,M}^{0}-u}{2,r}+ \tfrac{1}{T}(1+16 e^{2LT})\threenorm{U_{n,M}^{0}-u}{1,r}.
\end{align}
Next observe that the fact that
$\forall\,t\in[0,T], \lambda\in [0,1]\colon \funcLambda{\lambda}{t}= t+ \lambda(T-t)$, the fact that 
$\forall\,a,b\in[0,\infty)\colon\left|a^{\nicefrac{1}{2}}-b^{\nicefrac{1}{2}}\right|\leq |a-b|^{\nicefrac{1}{2}}$, and the fact that
$\forall\,\lambda\in [0,1]\colon|1-\lambda|^{\nicefrac{1}{2}}+|\lambda|^{\nicefrac{1}{2}}\leq \sqrt{2}$  demonstrate that for all 
$\lambda\in[0,1]$, $r\in [0,T]$, $s,t\in [r,T]$ it holds that
\begin{gather}\label{m19}
\renewcommand{\sqrt}[1]{\left|#1\right|^{\nicefrac{1}{2}}}
\begin{split}
&\sqrt{\funcLambda{\lambda}{s}-\funcLambda{\lambda}{t}}+\left|\sqrt{\funcLambda{\lambda}{s}-s}-\sqrt{\funcLambda{\lambda}{t}-t}\right|
\\&= \sqrt{(1-\lambda)(s-t)}
+\left|\sqrt{\lambda(T-t)}-\sqrt{\lambda(T-s)}\right|
\\
&\leq \sqrt{(1-\lambda)(s-t)}+
\sqrt{\lambda(s-t)}
\leq  \sqrt{2(s-t)}.
\end{split}
\end{gather}
This,
\eqref{m22}, H\"older's inequality, the triangle inequality, 
and
\eqref{t10}
show that for all 
$\lambda\in[0,1]$, $r\in [0,T]$, $s,t\in [r,T]$, $x,y\in\R^d$, $n\in \N_0$, $M\in\N$,
$H\in \{\lambda_1F(U_{n,M}^0)+\lambda_2F(u)+\lambda_3F(0)\colon \lambda_1,\lambda_2,\lambda_3\in \R\}$ it holds that
{\allowdisplaybreaks\renewcommand{\sqrtT}[1]{\left|#1\right|^{\nicefrac{1}{2}}}
\begin{align}
&
\sqrt{T}\bblp{
\lp{{H}\bigl(\funcLambda{\lambda}{s}, a\bigr) -{H}\bigl(\funcLambda{\lambda}{t}, b\bigr)}_{\lpspace{2}}\bigr|_{\substack{(a,b)=(X^0_{s,\funcLambda{\lambda}{s}}(x),X^0_{t,\funcLambda{\lambda}{t}}(y))}}}_{\lpspace{2}}\nonumber  \\
&\leq \threenorm{H}{2,\funcLambda{\lambda}{r}}\biggl(\cstMD \sqrtT{\funcLambda{\lambda}{s}-\funcLambda{\lambda}{t}}+
\blp{
\bigl\|X^0_{s,\funcLambda{\lambda}{s}}(x)-X^0_{t,\funcLambda{\lambda}{t}}(y)\bigr\|
}_{\lpspace{4}}\biggr)\nonumber 
\\&\quad\cdot
\lp{
\bigl(\barV(\funcLambda{\lambda}{s},X^0_{s,\funcLambda{\lambda}{s}}(x))+\barV(\funcLambda{\lambda}{t},X^0_{t,\funcLambda{\lambda}{t}}(y))\bigr)^{\nicefrac{1}{4}}
}_{\lpspace{4}}
\nonumber \\
&\leq \threenorm{H}{2,\funcLambda{\lambda}{r}}\left[\cstMD \sqrtT{\funcLambda{\lambda}{s}-\funcLambda{\lambda}{t}}
+
\cstMD
 \left|\sqrtT{\funcLambda{\lambda}{s}-s}-\sqrtT{\funcLambda{\lambda}{t}-t}\right|+\|x-y\|\right]\nonumber \\&\quad \cdot
\Bigl(
\E\!\left[\barV(\funcLambda{\lambda}{s},X^0_{s,\funcLambda{\lambda}{s}}(x))+\barV(\funcLambda{\lambda}{t},X^0_{t,\funcLambda{\lambda}{t}}(y))\right]\Bigr)^{\!\nicefrac{1}{4}}\nonumber \\
&\leq\threenorm{H}{2,\funcLambda{\lambda}{r}}\left(\cstMD  \sqrtT{2(s-t)}+\|x-y\|\right)\bigl(\barV(s,x)+\barV(t,y)\bigr)^{\nicefrac{1}{4}}.
\end{align}}%
This, e.g., the disintegration-type result in \cite[Lemma~2.2]{HJKNW2018}, and  the independence property in item \eqref{t35} imply that for all 
$r\in [0,T]$, $s,t\in [r,T]$, 
$x,y\in\R^d$, $n\in \N_0$, $M\in\N$,
$H\in\{\lambda_1F(U_{n,M}^0)+\lambda_2F(u)+\lambda_3F(0)\colon \lambda_1,\lambda_2,\lambda_3\in \R\}$ it holds that
{\newcommand{\sqrtN}[1]{\left|#1\right|^{\nicefrac{1}{2}}}
\begin{align}
\label{m07}
&\sqrt{T}\lp{{H}\bigl(\funcLambda{\unif^0}{s}, X^0_{s,\funcLambda{\unif^0}{s}}(x)\bigr) -{H}\bigl(\funcLambda{\unif^0}{t}, X^0_{t,\funcLambda{\unif^0}{t}}(y)\bigr)}_{\lpspace{2}}\nonumber \\
&=\lp{\left.\sqrt{T}\lp{\lp{{H}\bigl(\funcLambda{\lambda}{s}, a\bigr) -{H}\bigl(\funcLambda{\lambda}{t}, b\bigr)}_{\lpspace{2}}\bigr|_{\substack{(a,b)=(X^0_{s,\funcLambda{\lambda}{s}}(x) ,X^0_{t,\funcLambda{\lambda}{t}}(y))}}}_{\lpspace{2}}\right|_{\lambda=\unif^0}}_{\lpspace{2}}\nonumber \\
&\leq 
\lp{
\funcH{H}{\funcLambda{\unif^0}{r}}}_{\lpspace{2}}
\left( \cstMD \sqrtN{2(s-t)}+\|x-y\|\right)
(\barV(s,x)+\barV(t,y))^{\nicefrac{1}{4}}.
\end{align}}%
Next note that, e.g., the disintegration-type result in \cite[Lemma~2.2]{HJKNW2018}, 
 the independence property in item \eqref{t35}, and
\eqref{m22}
 imply that for all 
$r\in [0,T]$, $t\in [r,T]$,  $y\in\R^d$,  $n\in \N_0$, $M\in\N$, $H\in \{\lambda_1F(U_{n,M}^0)+\lambda_2F(u)+\lambda_3F(0)\colon \lambda_1,\lambda_2,\lambda_3\in \R\}$ it holds that
{\renewcommand{\sqrt}[1]{\left(#1\right)^{\nicefrac{1}{2}}}
\allowdisplaybreaks
\begin{align}
&\blp{
 {H}\bigl(\funcLambda{\unif^0}{t}, X_{t,\funcLambda{\unif^0}{t}}^0(y)\bigr)}_{\lpspace{2}}
=
\bblp{
\blp{
\lp{
 {H}\bigl(\funcLambda{\lambda}{t}, b\bigr)}_{\lpspace{2}}\bigr|_{b=X_{t,\funcLambda{\lambda}{t}}^0(y)}
}_{\lpspace{2}}\Bigr|_{\lambda=\unif^0}}_{\lpspace{2}}\nonumber \\
&\leq 
\bbblp{
\bblp{\left[\threenorm{H}{1,\funcLambda{\lambda}{r}}
(\barV(\funcLambda{\lambda}{t},b))^{\beta}\right]
\Bigr|_{b=X_{t,\funcLambda{\lambda}{t}}^0(y)}
}_{\lpspace{2}}\biggr|_{\lambda=\unif^0}}_{\lpspace{2}}\nonumber \\
&=
\bblp{
\blp{\|\threenorm{H}{1,\funcLambda{\lambda}{r}}
\bigl(\barV(\funcLambda{\lambda}{t},X^0_{t,\funcLambda{\lambda}{t}}(y))\bigr)^{\beta}
}_{\lpspace{2}}\Bigr|_{\lambda=\unif^0}}_{\lpspace{2}}\leq 
\blp{\threenorm{H}{1,\funcLambda{\unif^0}{r}}}_{\lpspace{2}}(\barV(t,y))^{\beta}.
\end{align}}%
This, the triangle inequality, 
\eqref{m07}, the fact that $c\ge 1$ the fact that $V\geq 1$,  and the fact that 
$0\le \beta\le 1/4$ 
  imply that for all 
$\lambda\in[0,1]$, $r\in [0,T]$, $s,t\in [r,T]$, $x,y\in\R^d$, $n\in \N_0$, $M\in\N$, $H\in \{\lambda_1F(U_{n,M}^0)+\lambda_2F(u)+\lambda_3F(0)\colon \lambda_1,\lambda_2,\lambda_3\in \R\}$ it holds that
{\newcommand{\sqrtN}[1]{\left|#1\right|^{\nicefrac{1}{2}}}
\allowdisplaybreaks
\begin{align}
&\sqrt{T}\blp{(T-s){H}\bigl(\funcLambda{\unif^0}{s},X_{s,\funcLambda{\unif^0}{s}}^0(x)\bigr)
-
(T-t){H}\bigl(\funcLambda{\unif^0}{t},X_{t,\funcLambda{\unif^0}{t}}^0(y)\bigr)}_{\lpspace{2}}  \nonumber \\
&
\leq \sqrt{T}
(T-s)\blp{{H}\bigl(\funcLambda{\unif^0}{s}, X^0_{s,\funcLambda{\unif^0}{s}}(x)\bigr) -{H}\bigl(\funcLambda{\unif^0}{t}, X^0_{t,\funcLambda{\unif^0}{t}}(y)\bigr)}_{\lpspace{2}
}  \nonumber 
\\
&\quad
+\sqrt{T}
|s-t| \blp{{H}\bigl(\funcLambda{\unif^0}{t}, X_{t,\funcLambda{\unif^0}{t}}^0(y)\bigr)}_{\lpspace{2}} \nonumber 
\\
&\leq(T-s)
\blp{\threenorm{H}{2,\funcLambda{\unif^0}{r}}}_{\lpspace{2}}
\left( \cstMD \sqrtN{2(s-t)}+\|x-y\|\right)
(\barV(s,x)+\barV(t,y))^{\nicefrac{1}{4}}  \nonumber 
\\&\quad
+\sqrt{T}|s-t|\lp{\threenorm{H}{1,\funcLambda{\unif^0}{r}}}_{\lpspace{2}}(\barV(t,y))^{\beta} \nonumber 
\\
&\leq \sqrtN{T(T-r)}\biggl[
\blp{\threenorm{H}{1,\funcLambda{\unif^0}{r}}}_{\lpspace{2}}+\sqrt{2}
\blp{\threenorm{H}{2,\funcLambda{\unif^0}{r}}}_{\lpspace{2}}\biggr]
\nonumber   \\&\quad\cdot\left[ \cstMD \sqrtN{s-t}+\|x-y\|\right]
(\barV(s,x)+\barV(t,y))^{\nicefrac{1}{4}}   
.\label{m20}
\end{align}}%
Next observe that
\eqref{m22}, 
\eqref{t02b},
the fact that
$\forall\,x\in\R^d,t\in[0,T]\colon |Tf(t,x,0)|\leq  ({V}(x))^{\beta}$,
the fact that $V\geq 1$, and the fact that $ 0\le \beta\le 1/4$ imply that for all $r\in[0,T]$ it holds that
$\max_{j\in\{1,2\}}\threenorm{TF(0)}{j,r} \leq 1$.
This,  \eqref{m22}, \eqref{m20}, and the fact that $\P(0\leq \unif^0\leq 1)=1 $  show that for all $r\in[0,T]$ it holds that 
\begin{align}
&
\funcH{
[0,T]\times\R^d\times\Omega\ni(s,x,\omega)\mapsto\left[(T-s){(F(0))}\bigl(s+\unif^0(T-s),X_{s,s+\unif^0(T-s)}^0(x)\bigr)\right](\omega)\in\R
}{r}\nonumber 
\\
&\leq 
|(T-r)T|^{\nicefrac{1}{2}}  \max_{\zeta\in [r,T]}\left[
\threenorm{F(0)}{1,\zeta}+\sqrt{2}
\threenorm{F(0)}{2,\zeta}\right]
\leq 3.\label{b03}
\end{align}
Moreover, note that the integral transformation theorem and
the fact that $\unif^0$ is continuous uniformly distributed on $[0,1]$ imply that for all
$r\in [0,T]$ and all measurable  $h\colon [0,T]\to \R$ it holds that
\begin{align}\label{b04}\textstyle
|T-r|^{\nicefrac{1}{2}}\lp{h(\funcLambda{\unif^0}{r})}_2
= \left[\int_{0}^{1}(T-r)
|h(r+(T-r)\lambda)|^2\,d\lambda\right]^{\nicefrac{1}{2}}= 
\left[\int_{r}^{T}
|h(\zeta)|^2\,d\zeta\right]^{\nicefrac{1}{2}}.
\end{align}
This, 
\eqref{m20},
\eqref{m22},   \eqref{m14}, \eqref{m13},
and the fact that
$
TL+\sqrt{2}( 16 e^{2LT}+1)+\sqrt{2}TL\leq 
16\sqrt{2}
e^{2LT}+\sqrt{2}(1+2TL) 
\leq 17\sqrt{2}e^{2LT }\leq 
24.5 e^{2LT }
$
show for all $r\in[0,T]$, $n\in \N_0$, $M\in\N$ that
{ \allowdisplaybreaks\begin{align}
&\threenorm{\begin{aligned}
&[0,T]\times\R^d\times\Omega\ni(s,x,\omega)\\
&\mapsto (T-s)\left[(F(U_{n,M}^0)-F(u))\bigl(s+\unif^0(T-s),X_{s,s+\unif^0(T-s)}^0(x)\bigr)\right](\omega)\in\R
\end{aligned}
}{2,r}\nonumber \\
&
\leq |T(T-r)|^{\nicefrac{1}{2}}\biggl[
\lp{\threenorm{F(U_{n,M}^0)-F(u)}{1,\funcLambda{\unif^0}{r}}}_{\lpspace{2}}+\sqrt{2}
\lp{\threenorm{F(U_{n,M}^0)-F(u)}{2,\funcLambda{\unif^0}{r}}}_{\lpspace{2}}\biggr]\nonumber \\
&
\leq |T(T-r)|^{\nicefrac{1}{2}}\biggl[\left(
L+\sqrt{2}\tfrac{16 e^{2LT}+1}{T}\right)
\lp{\threenorm{U_{n,M}^0-u}{1,\funcLambda{\unif^0}{r}}}_{\lpspace{2}}+
\sqrt{2}L
\lp{\threenorm{U_{n,M}^0-u}{2,\funcLambda{\unif^0}{r}}}_{\lpspace{2}}
\biggr]\nonumber \\
&
\leq \tfrac{1}{\sqrt{T}}T\left[
\left(
L+\sqrt{2}\tfrac{16 e^{2LT}+1}{T}\right)+
\sqrt{2}L\right]
\max_{j\in\{1,2\}}\left[
|T-r|^{\nicefrac{1}{2}}
\lp{\threenorm{U_{n,M}^0-u}{j,\funcLambda{\unif^0}{r}}}_{\lpspace{2}}\right]
\nonumber \\
&\leq \tfrac{24.5}{\sqrt{T}}e^{2LT}
\left[ \int_{r}^{T}\threenorm{U_{n,M}^0-u}{0,\zeta}^2\,d\zeta\right]^{\nicefrac{1}{2}}.
\label{b02}
\end{align}}%
This, \eqref{t27}, \eqref{m22b}, \eqref{m22},  
items \eqref{t34}--\eqref{t35},
 Bienaym\'e's identity,
\eqref{b01}, and \eqref{b03} imply  for all $n,M\in\N$, $r\in[0,T]$  that
{\allowdisplaybreaks\begin{align}
&\threenorm{U^0_{n,M}-\E[U^0_{n,M}]}{2,r}\nonumber 
\leq  \threenorm{[0,T]\times\R^d\times\Omega\ni (t,x,\omega)\mapsto 
\frac{1}{M^n}\sum_{i=1}^{M^n}
\bigl[ g(X^{(0,0,-i)}_{t,T}(x))\bigr](\omega)\in\R}{2,r}\nonumber \\
&\quad +
\sum_{\ell=0}^{n-1}\left[ 
\threenorm{
\begin{aligned}
&[0,T]\times\R^d\times\Omega\ni (t,x,\omega)\mapsto\frac{T-t}{M^{n-\ell}}\sum_{i=1}^{M^{n-\ell}}\bigl[
      \bigl({F}\bigl({{U}}_{\ell,M}^{(0,\ell,i)}\bigr)-\1_{\N}(\ell){F}\bigl( {{U}}_{\ell-1,M}^{(0,-\ell,i)}\bigr)\bigr)\\&
      \bigl(t+(T-t)\unif^{(0,\ell,i)},{X}_{t,t+(T-t)\unif^{(0,\ell,i)}}^{(0,\ell,i)}(x)\bigr)\bigr](\omega)\in\R
\end{aligned}
}{2,r}\right]\nonumber 
\\
&= \frac{1}{\sqrt{M^n}}
     \threenorm{[0,T]\times\R^d\times\Omega\ni (t,x,\omega)\mapsto 
\bigl[ g(X^{0}_{t,T}(x))\bigr](\omega)\in\R}{2,r}\nonumber\\
&
\quad + \sum_{\ell=0}^{n-1}\left[ \frac{1}{\sqrt{M^{n-\ell}}}
\threenorm{
\begin{aligned}
&[0,T]\times\R^d\times\Omega\ni (t,x,\omega)\mapsto \\&(T-t)\bigl[
      \bigl({F}\bigl({{U}}_{\ell,M}^{(0,\ell,1)}\bigr)-\1_{\N}(\ell){F}\bigl( {{U}}_{\ell-1,M}^{(0,-\ell,1)}\bigr)\bigr)\\&
      \bigl(t+(T-t)\unif^{(0,\ell,1)},{X}_{t,t+(T-t)\unif^{(0,\ell,1)}}^{(0,\ell,1)}(x)\bigr)\bigr](\omega)\in\R
\end{aligned}
}{2,r}\right]\nonumber 
\\
&
\leq \frac{1}{\sqrt{M^n}}
     \threenorm{[0,T]\times\R^d\times\Omega\ni (t,x,\omega)\mapsto  \bigl[g(X^{0}_{t,T}(x))\bigr](\omega)\in\R}{2,r}\nonumber\\
&\quad + \frac{1}{\sqrt{M^n}}
\threenorm{\begin{aligned}
&
[0,T]\times\R^d\times\Omega\ni (t,x,\omega)\mapsto\\
&(T-t)\bigl[(F(0))(t+\unif^{0}(T-t),X_{t,t+\unif^{0}(T-t)}^{0}(x))\bigr](\omega)\in\R
\end{aligned}
}{2,r}
\nonumber\\
&
\quad + \sum_{\ell=1}^{n-1}\left[ \frac{1}{\sqrt{M^{n-\ell}}}
\threenorm{
\begin{aligned}
&[0,T]\times\R^d\times\Omega\ni (t,x,\omega)\mapsto (T-t)\bigl[
      \bigl({F}\bigl({{U}}_{\ell,M}^{0}\bigr)-{F}(u)\bigr)\\&
      \bigl(t+(T-t)\unif^{0},{X}_{t,t+(T-t)\unif^{0}}^{0}(x)\bigr)\bigr](\omega)\in\R
\end{aligned}
}{2,r}\right]\nonumber\\
&
\quad + \sum_{\ell=1}^{n-1}\left[ \frac{1}{\sqrt{M^{n-\ell}}}
\threenorm{
\begin{aligned}
&[0,T]\times\R^d\times\Omega\ni (t,x,\omega)\mapsto (T-t)\bigl[
      \bigl({F}\bigl({{U}}_{\ell-1,M}^{0}\bigr)-{F}(u)\bigr)\\&
      \bigl(t+(T-t)\unif^{0},{X}_{t,t+(T-t)\unif^{0}}^{0}(x)\bigr)\bigr](\omega)\in\R
\end{aligned}
}{2,r}\right]\nonumber \\
&\leq 
\frac{4}{\sqrt{M^n}}
+\sum_{\ell=0}^{n-1}\left[\frac{(2-\1_{\{n-1\}}(\ell))24.5 T^{-\nicefrac{1}{2}} e^{2LT} }{\sqrt{M^{n-\ell-1}}}
\left[\int_r^T
\threenorm{U^0_{\ell,M}-u}{0,\zeta}^2\,d\zeta
\right]^{\nicefrac{1}{2}}\right]
.\label{t32}
\end{align}}%
Next observe that  \eqref{t31},  
\eqref{m22},
Jensen's inequality, and
\eqref{b02} assure that  for all $n,M\in\N$, $r\in[0,T]$ it holds that
{\allowdisplaybreaks
\begin{align}
&\threenorm{\E[U^0_{n,M}]-u}{2,r}
=
\threenorm{\begin{aligned}
&[0,T]\times\R^d\times\Omega\ni (t,x,\omega)\mapsto\\
&\E\bigl[(T-t)(F(U_{n-1.M}^{0})-F(u))(t+\unif^{0}(T-t),X_{t,t+\unif^{0}(T-t)}^{0}(x))\bigr]\in\R 
\end{aligned}
}{2,r}\nonumber 
\\
&\leq  \threenorm{\begin{aligned}
&[0,T]\times\R^d\times\Omega\ni (t,x,\omega)\mapsto\\&(T-t)\bigl[(F(U_{n-1.M}^{0}-F(u))(t+\unif^{0}(T-t),X_{t,t+\unif^{0}(T-t)}^{0}(x))\bigr](\omega)\in\R
\end{aligned}}{2,r}\nonumber \\
&\leq 24.5 T^{-\nicefrac{1}{2}} e^{2LT} 
\left[\int_r^T
\threenorm{U^0_{\ell,M}-u}{0,\zeta}^2\,d\zeta
\right]^{\nicefrac{1}{2}}.
\end{align}}%
This, \eqref{t32},  and the triangle inequality  
show  for all $n,M\in\N$, $r\in[0,T]$  that
\begin{align}\begin{split}
\threenorm{U^0_{n,M}-u}{2,r}
&\leq \frac{4}{\sqrt{M^n}}
+\sum_{\ell=0}^{n-1}\left[\frac{\luckyCst T^{-\nicefrac{1}{2}} e^{2LT} }{\sqrt{M^{n-\ell-1}}}
\left[\int_r^T
\threenorm{U^0_{\ell,M}-u}{0,\zeta}^2\,d\zeta
\right]^{\nicefrac{1}{2}}\right].
\end{split}\label{k09}\end{align}
Moreover, note that \cite[Lemma~3.5]{HJKN20} (applied
for every $s\in[0,T]$
 with 
$\rho\defeq 2\beta\rho$,
$\varphi\defeq V^{2\beta}$,
$Y\defeq X$,
$t\defeq s$
 in the notation of \cite[Lemma~3.5]{HJKN20}),
\eqref{t02b},
the fact that $\forall\,M\in \N\colon U_{0,M}^0=0$,
\eqref{t27},
\eqref{t27b},
\eqref{m23b}, and
\eqref{t10}
 prove that for all $s\in[0,T]$, $M,n\in\N$ it holds that
\begin{align}\begin{split}
&\sup_{x\in\R^d}\left[
\frac{e^{\beta\rho s}\lp{{U}_{n,M}^{0}(s,x)-u(s,x)}_{\lpspace{2}}}{(V(x))^\beta}\right]\\
&
\leq 
\frac{2e^{\beta \rho T}}{\sqrt{M^n}}+
\sum_{\ell=0}^{n-1}\left[
\frac{2L(T-s)^{\nicefrac{1}{2}}}{\sqrt{M^{n-\ell-1}}}
\left(
\int_{s}^{T}\sup_{\eta\in[\zeta,T]}
\sup_{x\in\R^d}\left[\frac{e^{2\beta \rho \eta}
\lp{
U^{0}_{\ell,M}(\eta,x) -u(\eta,x)}^2_{\lpspace{2}}}{(V(x))^{2\beta}}
\right]d\zeta\right)^{\!\nicefrac{1}{2}}\right].
\end{split}\end{align}
Combining this,
the fact that $\forall\,t\in[0,T],  x\in\R^d\colon\barV(t,x)= e^{\rho (T-t)}V(x)$, and \eqref{m22}  ensures that for all $r\in[0,T]$,  $M,n\in\N$ it holds that
{\allowdisplaybreaks
\begin{align}
&\threenorm{{U}_{n,M}^{0}-u}{1,r}=
\sup_{s\in[r,T]}\sup_{x\in\R^d}
\frac{\lp{{U}_{n,M}^{0}(s,x)-u(s,x)}_{\lpspace{2}}}{(\barV(s,x))^\beta}=\sup_{s\in[r,T]}
 \sup_{x\in\R^d}
\frac{\lp{{U}_{n,M}^{0}(s,x)-u(s,x)}_{\lpspace{2}}}{e^{\beta\rho (T-s)}(V(x))^\beta}
\nonumber \\
&\leq 
\frac{2}{\sqrt{M^n}}+\sup_{s\in [r,T]}
\sum_{\ell=0}^{n-1}\left[
\frac{T^{-\nicefrac{1}{2}}e^{2LT}}{\sqrt{M^{n-\ell-1}}}
\left(
\int_{s}^{T}
\sup_{\eta\in [\zeta,T]}
\sup_{x\in\R^d}\left[\frac{
\lp{
U^{0}_{\ell,M}(\eta,x) -u(\eta,x)}_{\lpspace{2}}}{e^{\beta\rho (T-\eta)}(V(x))^\beta}
\right]^2d\zeta\right)^{\!\nicefrac{1}{2}}\right]
\nonumber \\
&= 
\frac{2}{\sqrt{M^n}}+
\sum_{\ell=0}^{n-1}\left[
\frac{T^{-\nicefrac{1}{2}}e^{2LT}}{\sqrt{M^{n-\ell-1}}}
\left(
\int_{r}^{T}
\sup_{\eta\in [\zeta,T]}
\sup_{x\in\R^d}\left[\frac{
\lp{
U^{0}_{\ell,M}(\eta,x) -u(\eta,x)}_{\lpspace{2}}}{
(\barV(\eta,x))^\beta}
\right]^2d\zeta\right)^{\!\nicefrac{1}{2}}\right]
\nonumber \\
&=
\frac{2}{\sqrt{M^n}}+
\sum_{\ell=0}^{n-1}\left[
\frac{T^{-\nicefrac{1}{2}}e^{2LT}}{\sqrt{M^{n-\ell-1}}}
\left(
\int_{r}^{T}
\threenorm{U^0_{\ell,M}-u}{1,\zeta}^2d\zeta\right)^{\!\nicefrac{1}{2}}\right].
\end{align}}%
This, \eqref{m22}, and \eqref{k09} demonstrate that for all  $r\in[0,T]$,  $M,n\in\N$ it holds that
\begin{align}\begin{split}
&\threenorm{{U}_{n,M}^{0}-u}{r}
\leq \frac{4}{\sqrt{M^n}}
+\sum_{\ell=0}^{n-1}\left[\frac{\luckyCst T^{-\nicefrac{1}{2}}  e^{2LT} }{\sqrt{M^{n-\ell-1}}}
\left[\int_r^T
\threenorm{U^0_{\ell,M}-u}{0,\zeta}^2\,d\zeta
\right]^{\nicefrac{1}{2}}\right].
\end{split}\label{k22}\end{align}
 Combining
\cite[Lemma~3.10]{HJKN20}
(applied for every $M,N\in\N$, $r\in[0,T]$ with 
 $a\defeq 4$, $b\defeq \luckyCst T^{-\nicefrac{1}{2}} e^{2LT}$, $c\defeq 1/\sqrt{M}$, $\alpha\defeq 0$, $\beta\defeq T$,
$(f_n)_{n\in[0,N]\cap\N_0 }\defeq ([0,T]\ni s\mapsto\threenorm{{U}_{n,M}^{0}-u}{s}\in[0,\infty])_{n\in[0,N]\cap\N_0}
$ in the notation of \cite[Lemma~3.10]{HJKN20}), 
the fact that $\forall\,k\in \N_0\colon M^{k}/k!\leq e^M$,
and \eqref{k10} hence assures that for all $r\in[0,T]$,
$M,N\in\N$ it holds that 
\begin{align}
&\threenorm{{U}_{N,M}^{0}-u}{0,r}\nonumber 
\\
&\leq \left[4+\luckyCst T^{-\nicefrac{1}{2}} e^{2LT}T^{\nicefrac{1}{2}}\sup_{s\in[r,T]}\threenorm{{U}_{0,M}^{0}-u}{0,s}\right]\left[
\sup_{k\in[0,N]\cap\Z}
\frac{M^{-(N-k)/2}}{\sqrt{k!}}\right]
\left(1+\luckyCst T^{-\nicefrac{1}{2}} e^{2LT}T^{\nicefrac{1}{2}}\right)^{N-1}
\nonumber \\
&\leq \left(4+\luckyCst e^{2LT}8e^{2LT}\right)
e^{M/2}M^{-N/2}\left(1+\luckyCst e^{2LT}\right)^{N-1}
\leq 
e^{M/2}M^{-N/2}\left(\luckyCstT e^{2LT}\right)^{N+1}
.
\end{align}
The fact that $\forall\,M\in \N\colon U_{0,M}^0=0$
and \eqref{k10} therefore show that 
 for all $r\in[0,T]$,
$M\in\N$, $N\in\N_0$ it holds that 
\begin{align}\begin{split}
&\threenorm{{U}_{N,M}^{s,0}-u}{r}
\leq 
e^{M/2}M^{-N/2}\left(\luckyCstT e^{2LT}\right)^{N+1}
.
\end{split}\end{align}
This establishes item \eqref{k21b} and item \eqref{k21bb}. The proof of \cref{m02} is thus complete.
\end{proof}

\section{Computational complexity analysis for MLP approximations for backward stochastic differential equations (BSDEs)}
\label{sec:compcompl}

In this section we combine the findings from Sections \ref{sec:2} and \ref{sec:compltwoplus} to supply in \cref{t26} and \cref{cor:sec4} computational complexity analyses for the Monte Carlo-type approximation algorithm for BSDEs in \eqref{eq:mlp_intro}--\eqref{eq:approx_intro} in \cref{h01} in \cref{sec:intro} above. 
\cref{cor:sec4} specializes \cref{t26} to the specific situation where the driver of the BSDEs is twice continuously differentiable with bounded derivatives and does neither depend on the time variable $t \in [0,T]$ nor on the space variable $x \in \R^d$ but only on the solution processes $Y^d : [0,T]\times \Omega \to \R$, $d \in \N$, of the BSDEs under consideration. 

Our proof of \cref{cor:sec4} uses beside \cref{t26} also the elementary Lipschitz-type estimate for twice continuously differentiable functions in \cref{f01} below. For completeness we also include in this section a detailed proof for \cref{f01}. Our proof of \cref{t26}, in turn, employs \cref{n002} from \cref{sec:2} and \cref{m02} from \cref{sec:compltwoplus}. 
\cref{h01} in the introduction is a direct consequence of \cref{cor:sec4}.

\begin{theorem}\label{t26}Assume \cref{m01},
let $\alpha\in\N$,
 $(\thetaBar_n)_{n\in\N_0}\subseteq\Theta$,
let $\lfloor \cdot \rfloor_M \colon \R \to \R$, $ M \in \N $, and 
$\lceil \cdot \rceil_M \colon  \R \to \R$, $ M \in \N $, 
satisfy for all $M \in \N$, $t \in [0,T]$ that
$\lfloor t \rfloor_M = \max( ([0,t]\backslash \{T\}) \cap \{ 0, \frac{ T }{ M }, \frac{ 2T }{ M }, \ldots \} )$
and 
$\lceil t \rceil_M = \min(((t,\infty) \cup \{T\})\cap  \{ 0, \frac{ T }{ M }, \frac{ 2T }{ M }, \ldots \} )$,
let $\Yappr^{n,M}
\colon [0,T]\times\Omega\to\R $,
 $n,M\in\N$, 
satisfy  for all $n,M\in\N$, $t\in[0,T]$  that
\begin{align}\label{d04b} 
\Yappr^{n,M}_{t}
&= \sum_{\ell=0}^{n-1}\biggl[
\left[ \tfrac{ \lceil t \rceil_{M^{l+1}} - t }{ ( T / M^{ l + 1 } ) } \right]
U^{\thetaBar_\ell}_{n-\ell,M}(\lfloor t \rfloor_{M^{l+1}}, W_{\lfloor t \rfloor_{M^{l+1}}})+
\left[\tfrac{ t-\lfloor t \rfloor_{M^{l+1}} }{ ( T / M^{ l + 1 } ) }\right]
U^{\thetaBar_\ell}_{n-\ell,M}(\lceil t \rceil_{M^{l+1}}, W_{\lceil t \rceil_{M^{l+1}}})
\nonumber
\\
&\quad-\1_{\N}(\ell)\Bigl(
\left[ \tfrac{ \lceil t \rceil_{M^{l}} - t }{ ( T / M^{ l  } ) } \right]U^{\thetaBar_\ell}_{n-\ell,M}(\lfloor t \rfloor_{M^{l}}, W_{\lfloor t \rfloor_{M^{l}}})+
\left[ \tfrac{ t-\lceil t \rceil_{M^{l}} }{ ( T / M^{ l  } ) } \right]
U^{\thetaBar_\ell}_{n-\ell,M}(\lceil t \rceil_{M^{l}}, W_{\lceil t \rceil_{M^{l}}})
\Bigr)\biggr],
\end{align}
and let
$\FEU{n,M}\in\N_0$, $n,M\in\Z$, and $\FEY{n,M}\in\N_0$, $n,M\in\Z$, 
satisfy 
for all $n,M\in \N_0$ that 
\begin{equation}
\label{d01}
 \FEU{n,M}\leq \alpha M^n \1_{\N}(n)+ \sum_{\ell=0}^{n-1}\left[M^{n-\ell}\left(1+\alpha+\FEU{\ell,M}+\FEU{\ell-1,M}\1_{\N}(\ell)\right)\right]
 \end{equation}
and $
\FEY{n,M}\leq \alpha(M^n+1)+\sum_{\ell=0}^{n-1} \left[(M^{\ell+1}+1)\FEU{n-\ell,M}\right]$.
Then 
\begin{enumerate}[(i)]
\item\label{t29} there exists an
$(\F_{t})_{t\in[0,T]}$-predictable stochastic process 
$\mathbf{Y}=(Y,Z)=(Y,Z^1,Z^2,\ldots,Z^d) \colon \allowbreak [0,T]\times\Omega\to \R \times\R^d
$
with
$
\int_0^T  \E \!\left[|Y_t|+\|Z_t\|^2\right]dt<\infty
$
which satisfies
that
for all $t\in[0,T]$ it holds $\P$-a.s.\ that
\begin{align}
Y_t=g(W_T)+\int_t^T f(s,W_s,Y_s)\,ds-\sum_{j=1}^{d}\int_t^T Z_s^j\, dW_s^j,
\label{t29b}
\end{align}
\item\label{t28} it holds for all $M,n\in \N$,  $t\in[0,T]$ that
$\Yappr^{n,M}_t$ is 
measurable,
\item \label{t28x} it holds for all $M,n\in \N$, $t\in[0,T]$ that
\begin{equation}
\bigl(
\E\bigl[
|
\Yappr^{n,M}_t-Y_t|^2\bigr]\bigr)^{\nicefrac{1}{2}}
\leq 8 n
e^{M/2+4nLT+\rho T/2}M^{-n/2}
\luckyCstT^{2n}
\bigl\lvert V(0) \max\! \left\{\E\!\left[\|\stdNormal^0\|^4\right],1\right\}\bigr\rvert^{\!\nicefrac{1}{4}},
\end{equation}
and
\item\label{t28b} there exists
$\sfN\colon (0,\infty) \to \N$ such that
for all $\epsilon,\delta\in (0,1]$ it holds
  that
$\sup_{t\in [0,T]}
(\E[
|Y_t-
\Yappr^{\sfN(\epsilon),\sfN(\epsilon)}_t|^2])^{1/2}
\leq\epsilon
$ and
\begin{align}
&\FEY{\sfN(\epsilon),\sfN(\epsilon)}\\
&\leq \alpha \left(\sup_{n\in\N}\left[\tfrac{10^{n+3}n^3 \left[8ne^{n/2+4nLT}\luckyCstT^{2n}\right]^{2+\delta} }{n^{\delta n/2}}\right]\right)
\bigl[e^{2\rho T} V(0) \max\{\E\!\left[\|\stdNormal^0\|^4\right],1\}\bigr]^{\frac{2+\delta}{4}}\epsilon^{-(2+\delta)}<\infty.\nonumber
\end{align}
\end{enumerate}
\end{theorem}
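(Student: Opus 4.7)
My plan is to establish the four claims of \cref{t26} by first constructing the BSDE solution from the solution of the associated stochastic fixed-point equation, then feeding the full-history MLP error bounds of \cref{m02} into the abstract multi-grid estimate of \cref{n002}. For item \eqref{t29} I would invoke item \eqref{t28c} of \cref{m02} to produce the unique measurable $u\colon[0,T]\times\R^d\to\R$ solving the associated SFPE, set $Y_t\defeq u(t,W_t)$, observe that the SFPE together with the Markov property of $W$ makes $M_t\defeq Y_t+\int_0^t f(s,W_s,Y_s)\,ds$ a martingale with respect to (a suitable augmentation of) the Brownian filtration, and apply the Brownian martingale representation theorem to produce the predictable integrand $Z=(Z^1,\ldots,Z^d)$ and the BSDE \eqref{t29b}; the required integrability follows from the a-priori bound $|u(t,x)|\le 4e^{LT}(\barV(t,x))^\beta$ derived inside the proof of \cref{m02} together with the Lyapunov estimate $\E[\barV(t,W_t)]\le e^{\rho T}V(0)$ (which follows from $W_t\stackrel{d}{=}X^0_{0,t}(0)$ and the last hypothesis of \cref{m01}). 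Item \eqref{t28} follows immediately since, by \eqref{d04b}, $\Yappr^{n,M}_t$ is a finite linear combination of compositions of the form $U^{\thetaBar_\ell}_{n-\ell,M}(r,W_r)$, each of which is measurable by item \eqref{t33} of \cref{m02}.

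The core of the proof is item \eqref{t28x}, which I would obtain by applying \cref{n002} to the vector space $L^2(\Omega;\R)$ equipped with norm $(\E[|\cdot|^2])^{1/2}$, with H\"older exponent $\alpha\defeq \tfrac{1}{2}$, with nested grids $\tau_{l,k}\defeq kT/M^l$ (the inclusion $\cup_i\{\tau_{l,i}\}\subseteq\cup_i\{\tau_{l+1,i}\}$ is automatic since $M\in\N$), and with the processes
\begin{equation*}
Y^0_t\defeq u(t,W_t),\qquad Y^k_t\defeq U^{\thetaBar_{n-k}}_{k,M}(t,W_t)\quad\text{for }k\in\{1,2,\ldots,n\}.
\end{equation*}
With these choices the process $\mathscr{Y}$ of \eqref{n05} coincides exactly with $\Yappr^{n,M}_{\cdot}$ from \eqref{d04b}. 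The three terms in the conclusion of \cref{n002} are then controlled as follows: independence of $(\stdNormal^\theta,\unif^\theta)_{\theta\in\Theta}$ from $W$ permits conditioning on $W$, so that the grid-point error $\max_k\|Y^n_{\tau_{1,k}}-Y^0_{\tau_{1,k}}\|$ is bounded via item \eqref{k21b} of \cref{m02} combined with $\E[(\barV(\cdot,W_\cdot))^{2\beta}]\le e^{2\beta\rho T}(V(0))^{2\beta}$; the $\tfrac{1}{2}$-H\"older seminorm of $Y^0$ is controlled via the pointwise bound \eqref{m23} on $u$ together with $\E[\|W_s-W_t\|^4]^{1/4}\le\cstMD|s-t|^{1/2}$; and the $\tfrac{1}{2}$-H\"older seminorm of $Y^{n-l}-Y^0$ is controlled, after conditioning on $(W_s,W_t)$, by item \eqref{k21bb} of \cref{m02}, since the random weight $\cstMD|s-t|^{1/2}+\|W_s-W_t\|$ has the correct $|s-t|^{1/2}$ scaling. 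Summing the resulting geometric series then yields the $L^2$-error bound of item \eqref{t28x}.

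For item \eqref{t28b} I would first show by a routine induction on $n$ from the cost recursion \eqref{d01} a standard MLP cost bound of the shape $\FEU{n,M}\le c_\alpha(3M)^n$, which upon insertion into the bound for $\FEY{n,M}$ yields $\FEY{n,M}\le c_\alpha n(3M)^{n+1}$. Setting $n=M=N$, the error bound from item \eqref{t28x} reads
\begin{equation*}
\epsilon(N)\defeq 8Ne^{N/2+4NLT+\rho T/2}N^{-N/2}\cdot 50^{2N}\cdot\bigl[V(0)\max\{\E[\|\stdNormal^0\|^4],1\}\bigr]^{1/4},
\end{equation*}
which decays super-exponentially in $N$ while the cost grows only like $N(3N)^{N+1}$. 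Defining $\sfN(\epsilon)$ as the smallest $N\in\N$ with $\epsilon(N)\le\epsilon$ gives the accuracy assertion at once, and the complexity assertion reduces to an inequality of the form $\FEY{\sfN(\epsilon),\sfN(\epsilon)}\le c\,\epsilon^{-(2+\delta)}$. The main obstacle, and the source of the $(2+\delta)$ exponent, is to absorb the growth factor $[8ne^{n/2+4nLT}50^{2n}]^{2+\delta}$ arising from $(\epsilon(N))^{-(2+\delta)}$ into a finite constant by playing it off against the $n^{-\delta n/2}$ factor contributed by $N^{-N/2}$; this is exactly the content of the finiteness of $\sup_{n\in\N}[10^{n+3}n^3[8ne^{n/2+4nLT}50^{2n}]^{2+\delta}n^{-\delta n/2}]$ that appears in the conclusion, and is the single delicate bookkeeping step of the complexity calculation.
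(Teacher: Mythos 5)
Your proposal is correct and follows essentially the same route as the paper: item (i) via the SFPE solution $u$ from \cref{m02} composed with $W$ plus the martingale representation theorem, item (iii) by applying \cref{n002} in $L^2(\P)$ with $\alpha=\nicefrac12$ and the nested dyadic-type grids $kT/M^l$, controlling the three resulting terms through items (iii) and (iv) of \cref{m02} after disintegrating over the independent Brownian path, and item (iv) via the cost recursion and the minimality of $\sfN(\epsilon)$ played off against the super-exponential decay $N^{-N/2}$. The only deviations are cosmetic (you bound the H\"older seminorm of $u(\cdot,W_\cdot)$ from the pointwise estimate on $u$ rather than from the $N=0$ case of the MLP bound, and your level indexing $Y^k_t=U^{\thetaBar_{n-k}}_{k,M}(t,W_t)$ is in fact the one that makes \eqref{n05} match \eqref{d04b} exactly), and they do not affect the argument.
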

\begin{proof}[Proof of \cref{t26}]\sloppy
Throughout this proof
let 
$\cstMD\in[1,\infty) $ satisfy 
$
\cstMD= (\max\{\E [\|\stdNormal^0\|^4],1\})^{\nicefrac{1}{4}}
$, let $\barV\colon [0,T]\times\R^d\to [1,\infty) $ satisfy 
for all $t\in[0,T]$,  $x\in\R^d$ that $\barV(t,x)= e^{\rho (T-t)}V(x)$,
for every $q\in [1,\infty)$ and every random variable 
$\mathfrak{X}\colon\Omega\to\R$
let $\lp{\mathfrak{X}}_{q} \in [0,\infty]$
satisfy that $\lp{\mathfrak{X}}_{q}= (\E[|\mathfrak{X}|^q])^{1/q}$,
for every $r\in [0,T]$ and
for
 every random field
$H\colon [0,T]\times \R^d\times \Omega\to\R $ let $\threenorm{H}{j,r},\in[0,\infty] $, $j\in \{0,1,2\}$, satisfy that
\begin{align}\label{m22c}
\begin{split}
\threenorm{H}{0,r}&=\max _{j\in\{1,2\}}\threenorm{H}{j,r},\quad 
\funcN{H}{r}= \sup_{x\in\R^d,s\in [r,T]}\left[ \frac{\left(\E\!\left[|H(s,x)|^2\right]\right)^{\nicefrac{1}{2}}}{(\barV(s,x))^{\beta}}\right],\quad\text{and}\quad
\\
\funcH{H}{r}&=\sup_{\substack{s,t\in [r,T],x,y\in\R^d\\(s,x)\neq(t,y)}}\left[\frac{T^{\nicefrac{1}{2}}\left(\E\!\left[|H(s,x)-H(t,y)|^2\right]\right)^{\nicefrac{1}{2}}}{
\bigl[
\cstMD
|s-t|^{\nicefrac{1}{2}}+\|x-y\|\bigr]\bigl(\barV(s,x)+\barV(t,y)\bigr)^{\nicefrac{1}{4}}}\right]
\end{split}\end{align}
 and let $\bbM\colon\Omega\to\R$ and $\mathscr{Y}\colon[0,T]\times\Omega\to\R$ satisfy  for all $s\in[0,T]$  that
\begin{align}
\mathscr{Y}_s=u(s,W_s)\qquad\text{and}\qquad
\bbM={g}(W_{T}) +
\int_0^T 
f(t,W_{t},\mathscr{Y}_t)\,dt.\label{c04}
\end{align}
Note that it is well-known that \eqref{t02b} and \eqref{m11b} imply item \eqref{t29} (cf., e.g., \cite[Theorem~4.3.1]{Zha17}).
Next observe that \cref{m02}
proves that
\begin{enumerate}[(a)]
\item   
 there exists a unique 
measurable 
$u\colon  [0,T]\times\R^d\to\R$ 
which satisfies
for all   $t\in[0,T]$, $x\in\R^d$  that
$
\E\bigl[|{g}(X^0_{t,T}(x))|\bigr]+\int_t^T\E\bigl[| ({F}(u))(s,X^0_{t,s}(x))| \bigr]\,ds +
\sup_{r\in[0,T],y\in\R^d}
\bigl(\frac{|u(r,y)|}{|V(y)|^\beta}\bigr)<\infty$ and
\begin{align}\label{t28d}
\begin{split}
u(t,x)=\E\!\left[{g}(X^0_{t,T}(x))\right]+\int_t^T
\E\!
\left[ \left({F}(u)\right)\!\left(s,X^0_{t,s}(x)\right)  \right]ds,
\end{split}
\end{align} 
\item it holds that $U_{n,M}^\theta$, $\theta\in\Theta$, $n\in\N_0$, $M\in\N$, are measurable,
and
\item it holds
 for all $r\in[0,T]$,
$M\in\N$, $N\in\N_0$ that 
\begin{align}\label{k21c}\begin{split}
&\threenorm{{U}_{N,M}^{0}-u}{0,r}
\leq 
e^{M/2}M^{-N/2}\left(\luckyCstT e^{2LT}\right)^{N+1}
.
\end{split}\end{align}
\end{enumerate}
This and the fact that $W$ is measurable
establish item \eqref{t28}.
Moreover, observe that 
\eqref{c04},
the triangle inequality, and
\eqref{t02b}
 prove that
\begin{align}
|\bbM|\leq |{g}(W_{T})|+\int_0^T |f(t,W_t,0)|+L |u(t,W_t)|\,dt.
\end{align}
This, the fact that 
$\sup_{x\in\R^d,t\in[0,T]} \bigl(\frac{|Tf(t,x,0)|+|g(x)|+|u(t,x)|}{ |{V}(x)|^{\beta}}\bigr)<\infty$,
the fact that $0\le \beta\le 1/2$,
the fact that $\forall\,x\in\R^d,s\in[0,T], t\in[s,T]\colon\E[V(x+W_{t-s})]\leq e^{\rho(t-s)}V(x)$, and Jensen's inequality
 imply that $\E[|\bbM|^2]<\infty$. This, the fact that 
 for all $A\in \mathcal{B}(\R)$ it holds that $\bbM^{-1}(A)\in \F_T$
and the martingale representation theorem (see, e.g., \cite[Theorem~4.3.4]{Oek03}) imply that there exists an $(\F_s)_{s\in[0,T]}$-progressively measurable stochastic process $\mathscr{Z}=(\mathscr{Z}^1,\mathscr{Z}^2,\ldots, \mathscr{Z}^d)\colon [0,T]\times\Omega\to\R^d$ which satisfies
that
  for all $s\in[0,T]$ it holds  $\P$-a.s.  that
$
\E[\bbM|\F_s]= \E[\bbM]+\sum_{j=1}^{d}\int_0^s\mathscr{Z}_r^j\,dW^j_r.
$
The fact that 
 for all $A\in \mathcal{B}(\R)$ it holds that $\bbM^{-1}(A)\in \F_T$
hence shows that for all $s\in[0,T]$ it holds $\P$-a.s.\ that
\begin{align}
\bbM-
\E[\bbM|\F_s]= 
\E[\bbM|\F_T]-
\E[\bbM|\F_s]= 
\sum_{j=1}^{d}\int_s^T\mathscr{Z}_r^j\,dW^j_r.\label{c05}
\end{align}
Next note that \eqref{t28d}
and the fact that $\forall\,t\in [0,T], B\in\mathcal{B}(\R^d)\colon 
\P( W_t\in B)=\P( \stdNormal^0\sqrt{t}\in B)$ show that for all $s\in [0,T]$, $z\in\R^d$ it holds that
\begin{align}\begin{split}
u(s,z)
=
\E\!\left[{g}(z+W_{T-s})+\int_s^T 
f(t,z+W_{t-s},u(t,z+W_{t-s}))\, dt\right] .
\end{split}\end{align}
The Markov property of Brownian motions,
\eqref{c04}, the fact that for all $s\in[0,T]$, $z\in\R^d$, $A\in \mathcal{B}(\R)$ it holds that
$\{ \omega \in \Omega \colon \int_0^s
f(t,z+W_{t}(\omega),u(t,z+W_{t}(\omega)))\,dt \in A \} \in \F_s$,
and \eqref{c05} hence
 show that for all $s\in[0,T]$, $x\in\R^d$ it holds $\P$-a.s.\ that
{\allowdisplaybreaks\begin{align}
&\mathscr{Y}_s=u(s,W_s)
= \left.\E\!\left[{g}(z+W_{T-s})+\int_s^T 
f(t,z+W_{t-s},u(t,z+W_{t-s}))\, dt\right]\right|_{z=W_s}
\nonumber
\\
&
=\E\!\left[{g}(W_{T}) +
\int_s^T 
f(t,W_{t},u(t,W_{t}))\,dt \middle| \F_{s}\right] \nonumber\\
&=
{g}(W_{T}) +
\int_s^T 
f(t,W_{t},\mathscr{Y}_t)\,dt\nonumber\\
&\quad-\left(
{g}(W_{T}) +
\int_s^T 
f(t,W_{t},\mathscr{Y}_t)\,dt
-
\E\!\left[{g}(W_{T}) +
\int_s^T 
f(t,W_{t},\mathscr{Y}_t)\,dt \middle| \F_{s}\right] \right)\nonumber\\
&= {g}(W_{T}) +
\int_s^T 
f(t,W_{t},\mathscr{Y}_t)\,dt - \left(\bbM-\E [\bbM|\F_s]\right)\nonumber\\
&= {g}(W_{T}) +
\int_s^T 
f(t,W_{t},\mathscr{Y}_t)\,dt - 
\sum_{j=1}^{d}\int_s^T\mathscr{Z}_r^j\,dW^j_r.
\end{align}}%
Combining item \eqref{t29} and the fact that $[0,T]\times \Omega \ni (t,\omega)\mapsto (\mathscr{Y}_t(\omega),\mathscr{Z}_t(\omega))\in \R\times \R^d$ is $(\F_t)_{t\in[0,T]}$-progressively measurable hence implies that for all
$t\in[0,T]$ 
it holds $\P$-a.s.\ that $(Y_t,Z_t)=(\mathscr{Y}_t,\mathscr{Z}_t)$. This, \eqref{c04} and the fact that
$Y$ and $\mathscr{Y}$  have continuous sample paths (see \eqref{c04} and item \eqref{t29})
 prove that $\P$-a.s. it holds for all $t\in[0,T]$ that
\begin{align}
Y_t=u(t,W_t).\label{t30b}
\end{align}
Next note that the assumption that 
$
(\unif^\theta)_{ \theta\in\Theta}$, $
(\stdNormal^\theta)_{ \theta\in\Theta}$, and
$W$
 are independent, 
the fact that $\forall\,M\in \N,\theta\in\Theta\colon U_{0,M}^\theta=0$,
and \eqref{t27} imply that 
$
(U^\theta_{n,M})_{  n\in\N_0, M\in\N, \theta\in\Theta}
$ and $W$ are independent. Combining
\eqref{m22c}, 
the fact that
$\forall\,t\in[0,T],  x\in\R^d\colon\barV(t,x)= e^{\rho (T-t)}V(x)$,
H\"older's inequality, 
the fact that $0\le \beta \le 1/4$, the fact that $V\geq 1$,
the triangle inequality,
the fact that for all $n,M\in\N$ it holds that $U^\theta_{n,M}$,  $\theta\in \Theta$, are identically distributed,
 \eqref{k21c}, and, e.g., the 
disintegration-type result in \cite[Lemma~2.2]{HJKNW2018} hence
 implies that for all $s\in[0,T]$, $t\in[s,T]$, $x\in\R^d$, $M\in\N$,
$n\in\N_0$, $\theta\in\Theta$ it holds that  
\begin{align}\begin{split}
&
\blp{
(U_{n,M}^\theta-u)(t,x+W_t)}_{\lpspace{2}}
=\blp{\lp{
(U_{n,M}^\theta-u)(t,y)}_{\lpspace{2}}\bigr|_{y=x+W_t}}_{\lpspace{2}}
\\
&
\leq 
\blp{\threenorm{U_{n,M}^\theta-u}{1,t}(\barV(t,X^0_{0,t}(x)))^{\beta}}_{\lpspace{2}}
= \blp{\threenorm{U_{n,M}^\theta-u}{1,t}e^{\beta\rho(T-t)}(V(X^0_{0,t}(x)))^{\beta}}_{\lpspace{2}}\\
&
\leq 
\threenorm{U_{n,M}^0-u}{0,t} e^{\beta\rho T}(V(x))^{\beta}
\leq \threenorm{U_{n,M}^0-u}{0,t} e^{\rho T/4}(V(x))^{\nicefrac{1}{4}}
\\
&
\leq e^{M/2}M^{-n/2}\left[\luckyCstT e^{2LT}\right]^{n+1}(V(x))^{\nicefrac{1}{4}} e^{\rho T/4}
\label{k13}
\end{split}\end{align}
and
{\renewcommand{\sqrt}[1]{\left|#1\right|^{\nicefrac{1}{2}}}\allowdisplaybreaks
\begin{align}
&\blp{(U_{n,M}^\theta-u)(s,x+W_s)-(U_{n,M}^\theta-u)(t,x+W_t)}_{\lpspace{2}}\nonumber \\
&
=\blp{\lp{(U_{n,M}^\theta-u)(s,a)-(U_{n,M}^\theta-u)(t,b)}_{\lpspace{2}}\bigr|_{(a,b)=(x+W_s,x+W_t)}}_{\lpspace{2}}\nonumber \\
&\leq 
\bblp{\threenorm{U_{n,M}^\theta-u}{2,s}T^{-\nicefrac{1}{2}}
\left[\cstMD\sqrt{s-t}+\|a-b\|
\right](\barV(s,a)+\barV(t,b))^{\nicefrac{1}{4}}\Bigr|_{\substack{(a,b)=(x+W_s, x+W_t)}}
}_2\nonumber \\
&\leq 
\bblp{\threenorm{U_{n,M}^\theta-u}{2,s}T^{-\nicefrac{1}{2}}e^{\rho T/4}
\left[\cstMD\sqrt{s-t}+\|a-b\|
\right](V(a)+V(b))^{\nicefrac{1}{4}}\Bigr|_{\substack{(a,b)=(x+W_s,x+W_t)}}
}_2\nonumber \\
&\leq
\threenorm{U_{n,M}^\theta-u}{0,s}T^{-\nicefrac{1}{2}} e^{\rho T/4}
 \left[\cstMD\sqrt{s-t}+\lp{\|W_s-W_t\|}_{\lpspace{4}}
\right] \bigl(\E\!\left[V(x+W_s)+V(x+W_t)\right]\bigr)^{\nicefrac{1}{4}}\nonumber
\\
&\leq 4e^{M/2}M^{-n/2}\left(\luckyCstT e^{2LT}\right)^{n+1}T^{-\nicefrac{1}{2}} e^{\rho T/2}\cstMD\sqrt{s-t}(V(x))^{\nicefrac{1}{4}}.
\label{k11}
\end{align}}%
The fact that $\forall\, M\in \N\colon U_{0,M}^0=0$ therefore assures
 for all $s\in[0,T]$, $t\in[s,T]$, $x\in\R^d$ that 
\begin{align}\renewcommand{\sqrt}[1]{\left|#1\right|^{\nicefrac{1}{2}}}
\begin{split}
&\lp{u(t,x+W_t)-u(s,x+W_s)}_{\lpspace{2}}\leq  4e^{M/2}\left(\luckyCstT e^{2LT}\right)T^{-\nicefrac{1}{2}} e^{\rho T/2}\cstMD\sqrt{t-s}(V(x))^{\nicefrac{1}{4}}.\label{k11b}
\end{split}\end{align}
Combining
\eqref{t30b}, 
\cref{n002} (applied for every $n,M\in\N$ with 
$V\defeq 
\{\mathfrak{Z}\colon \Omega\to\R \colon\mathfrak{Z} \text{ is }
\text{measurable}
\}
$,
$\lVert \cdot \rVert \defeq \lp{\cdot}_2$,
$\alpha\defeq 1/2$,
$(m_l)_{l\in \{1,2,\ldots,n\}}\defeq (M^l)_{l\in \{1,2,\ldots,n\}}$,
$
(\tau_{l,k})_{k\in  \{0,1,\ldots,m_l\}, l\in \{1,2,\ldots,n\}}
\defeq 
(\frac{kT}{M^l})_{k\in  \{0,1,\ldots,M^l\}, l\in \{1,2,\ldots,n\}}
$,
$(Y^0_t)_{t\in[0,T]}\defeq (u(t,W_t))_{t\in[0,T]}$,
$((Y^\ell_t)_{t\in[0,T]}
)_{\ell\in[1,n]\cap\N}
\defeq
((U^{\thetaBar_\ell}_{\ell,M}(t,W_t))_{t\in[0,T]})_{\ell\in[1,n]\cap\N}$,
$\Yappr \defeq \Yappr^{n,M}$ in the notation  of \cref{n002}),
\eqref{k13},  and
\eqref{k11}
hence demonstrates that for all $n,M\in\N$ it holds that
{\allowdisplaybreaks\begin{align}
&
\sup_{t\in[0,T]}\lp{\Yappr^{n,M}_t-Y_t}_2= 
\sup_{t\in[0,T]}\lp{\Yappr^{n,M}_t-u(t,W_t)}_2
\leq \sup_{t\in[0,T]}\lp{U^{\thetaBar_n}_{n,M}(t,W_t)-u(t,W_t)}_2\nonumber \\
&\quad
 + 2^{-1/2}T^{1/2}M^{-n/2}\left[\sup_{t,s\in[0,T],t\neq s}\frac{\lp{u(t,W_t)-u(s,W_s)}_2}{|t-s|^{\nicefrac{1}{2}}}\right]\nonumber \\
&\quad +
\sum_{\ell=1}^{n-1}\left[2^{-1/2}T^{1/2}
M^{-l/2}\left[\sup_{t,s\in[0,T],t\neq s}
\frac{\lp{
(U^{\thetaBar_{n-\ell}}_{n-\ell,M}-u)(t,W_t)
-
(U^{\thetaBar_{n-\ell}}_{n-\ell,M}-u)(s,W_s)
}_2}{|t-s|^{\nicefrac{1}{2}}}\right]\right]\nonumber \\
\nonumber \\
&\leq e^{M/2}M^{-n/2}\left[\luckyCstT e^{2LT}\right]^{n+1}(V(0))^{\nicefrac{1}{4}} e^{\rho T/2}
+2^{-1/2}T^{1/2}M^{-n/2}
4e^{M/2}\left(\luckyCstT e^{2LT}\right)T^{-\nicefrac{1}{2}} e^{\rho T/2}\cstMD(V(0))^{\nicefrac{1}{4}}\nonumber \\
&\quad +\sum_{\ell=1}^{n-1}\left[
2^{-1/2}T^{1/2}M^{-\ell/2}4e^{M/2}M^{-(n-\ell)/2}\left(\luckyCstT e^{2LT}\right)^{n-\ell+1}T^{-\nicefrac{1}{2}} e^{\rho T/2}\cstMD(V(0))^{\nicefrac{1}{4}}\right].
\end{align}}
The fact that $\cstMD \ge 1$ therefore proves that for all $n,M\in\N$ it holds that
\begin{equation}
\begin{split}
&\sup_{t\in[0,T]}\lp{\Yappr^{n,M}_t-Y_t}_2
\leq 
e^{M/2}M^{-n/2}(V(0))^{\nicefrac{1}{4}} e^{\rho T/2}
\\
&\quad
\cdot
\left[
 \left(\luckyCstT e^{2LT}\right)^{n+1}
 +
 2^{-1/2}4\cstMD\left(\luckyCstT e^{2LT}\right)
 +
 2^{-1/2}4\cstMD\sum_{\ell=1}^{n-1}
\left(\luckyCstT e^{2LT}\right)^{n-\ell+1} 
\right]\\
&\leq 
\cstMD e^{M/2}M^{-n/2}(V(0))^{\nicefrac{1}{4}} e^{\rho T/2}
\left[
 \left(\luckyCstT e^{2LT}\right)^{n+1}
 +
 2^{-1/2}4\left(\luckyCstT e^{2LT}\right)
 +
 2^{-1/2}4n
\left(\luckyCstT e^{2LT}\right)^{n} 
\right]
\\
&=
\cstMD e^{M/2}M^{-n/2}(V(0))^{\nicefrac{1}{4}} e^{\rho T/2}\luckyCstT^{2n}
e^{4nLT}\\
&\quad \cdot
\left[
 \left(\luckyCstT e^{2LT}\right)^{-n+1}
 +
 2^{-1/2}4\left(\luckyCstT e^{2LT}\right)^{1-2n}
 +
 2^{-1/2}4n
\left(\luckyCstT e^{2LT}\right)^{-n} 
\right]
\\
&\leq 
\cstMD e^{M/2}M^{-n/2}(V(0))^{\nicefrac{1}{4}} e^{\rho T/2}\luckyCstT^{2n}
e^{4nLT}
\left[
1
 +
 2^{-1/2}4\left(\luckyCstT e^{2LT}\right)^{-1}
 +
 2^{-1/2}4n
\left(\luckyCstT e^{2LT}\right)^{-1} 
\right]
\\
&\leq 
n\cstMD e^{M/2}M^{-n/2}(V(0))^{\nicefrac{1}{4}} e^{\rho T/2}\luckyCstT^{2n}
e^{4nLT}
\left[
1
 +
 \tfrac{4}{\luckyCstT\sqrt{2}}
 +
 \tfrac{4}{\luckyCstT\sqrt{2}}
\right]
\\
&\leq  2 n
e^{M/2}M^{-n/2}
\luckyCstT^{2n} e^{4nLT}
e^{\rho T/2}\cstMD(V(0))^{\nicefrac{1}{4}}.
\label{t30}\end{split}
\end{equation}%
This 
 establishes item  \eqref{t28x}. 
Next note that \cite[Lemma~3.6]{HJKNW2018} (applied with $d \defeq \alpha$, $(\mathrm{RV}_{n,M})_{n,M\in\Z}\defeq (\FEU{n,M})_{n,M\in\Z}$ in the notation of \cite[Lemma~3.6]{HJKNW2018}) and
 \eqref{d01}
 show that for all $n,M\in\N$ it holds that 
$
\FEU{n,M}\leq \alpha(5M)^n
$
and
{\allowdisplaybreaks
\begin{align}
&\alpha^{-1}\FEY{n,M}\leq M^n+1+\sum_{\ell=0}^{n-1} \left[(M^{\ell+1}+1)\alpha^{-1}\FEU{n-\ell,M}\right]\leq 
M^n+1+\sum_{\ell=0}^{n-1} \left[(M^{\ell+1}+1)(5M)^{n-\ell}\right]\nonumber \\
&
\leq M^n+1+n(5M)^{n+1}+\left[\sum_{\ell=0}^{n-1} (5M)^{n-\ell}\right]= M^n+1+n(5M)^{n+1}+\frac{(5M)^{n+1}-5M}{5M-1}\nonumber \\
&\leq (n+2)(5M)^{n+1}
.\label{e04}
\end{align}}%
Hence, we obtain that for all $n\in\N$ it holds that
$
\FEY{n+1,n+1}\leq \alpha (n+3)(5n+5)^{n+2}\leq \alpha (10n)^{n+3}
$.
This and \eqref{t30} demonstrate that for all 
$t\in[0,T]$,
$\delta\in (0,\infty)$, $n\in\N$ it holds that
\begin{equation}
\begin{split}
&\FEY{n+1,n+1}
\lp{
\Yappr^{n,n}_t-Y_t}_{\lpspace{2}}^{2+\delta} \leq \left[
\tfrac{
\alpha (10n)^{n+3}\left[8ne^{n/2}\luckyCstT^{2n} e^{4nLT}\right]^{2+\delta}}{n^{(2+\delta)n/2}} \right]\left[e^{\rho T/2}\cstMD(V(0))^{\nicefrac{1}{4}}\right]^{2+\delta} \\
&\leq\left[\tfrac{ 10^{n+3}n^3 \left[8ne^{n/2}\luckyCstT^{2n} e^{4nLT}\right]^{2+\delta} }{n^{\delta n/2}}\right]
\alpha
\left[e^{\rho T/2}\cstMD(V(0))^{\nicefrac{1}{4}}\right]^{2+\delta}<\infty.
\label{e01}
\end{split}
\end{equation}
Next observe that
 \eqref{t30} and the fact that
$\limsup_{n\to\infty}\left[n
e^{n/2}n^{-n/2}
\luckyCstT^{2n} e^{4nLT}\right]=0$
 prove that
 \begin{equation}\label{eq:tk1008}
 \limsup_{n\to\infty} \sup_{t\in[0,T]}\lp{
\Yappr^{n,n}_t-Y_t}_{\lpspace{2}}=0.
 \end{equation}
In the next step let $\sfN\colon (0,\infty) \to [0,\infty]$ satisfy for all $\epsilon\in(0,\infty)$ that
\begin{align}\label{e02}
\sfN(\varepsilon)= \inf\! \left(\left\{n\in\N\colon\textstyle\sup_{t\in[0,T]}
\E\bigl[
\left|
\Yappr^{n,n}_t-Y_t\right|^2\bigr]<\epsilon^2\right\}\cup\{\infty\}\right).
\end{align}
Note that \eqref{eq:tk1008} and \eqref{e02} imply that for all $\epsilon\in(0,\infty)$ it holds that 
$\sfN(\epsilon)\in \N$ and
$
\sup_{t\in[0,T]}\lp{
\Yappr^{\sfN(\epsilon),\sfN(\epsilon)}_t-Y_t}_{\lpspace{2}}<
\epsilon\leq 
\1_{\{1\}}(\sfN(\epsilon))\epsilon+
\1_{(1,\infty)}(\sfN(\epsilon))
\sup_{t\in[0,T]}\lp{
\Yappr^{\sfN(\epsilon)-1,\sfN(\epsilon)-1}_t-Y_t}_{\lpspace{2}}.
$
Combining \eqref{e04} and \eqref{e01}  hence ensures that for all $\delta,\epsilon\in(0,1]$ it holds that
\begin{align}\begin{split}\label{e03}
&\FEY{\sfN(\epsilon),\sfN(\epsilon)}\epsilon^{2+\delta}\leq \1_{\{1\}}(\sfN(\epsilon))
\FEY{\sfN(\epsilon),\sfN(\epsilon)}\epsilon^{2+\delta}+\1_{(1,\infty)}(\sfN(\epsilon))
\left[
\FEY{\sfN(\epsilon),\sfN(\epsilon)}\sup_{t\in[0,T]}
\lp{
\Yappr^{\sfN(\epsilon)-1,\sfN(\epsilon)-1}_t-Y_t}_{\lpspace{2}}^{2+\delta}\right]\\&\leq\left(\sup_{n\in\N}\left[\tfrac{10^{n+3}n^3 \left[8ne^{n/2}\luckyCstT^{2n} e^{4nLT}\right]^{2+\delta} }{n^{\delta n/2}}\right]\right)
\alpha
\left[e^{\rho T/2}\cstMD(V(0))^{\nicefrac{1}{4}}\right]^{2+\delta}<\infty.
\end{split}\end{align}
This, \eqref{e02}, 
the fact that for all $\epsilon\in(0,\infty)$ it holds that $\sfN(\epsilon)<\infty$, and  the fact that
$
\cstMD= (\max\{\E [\|\stdNormal^0\|^4],1\})^{\nicefrac{1}{4}}
$
 establish item \eqref{t28b}. The proof of \cref{t26} is thus complete.
\end{proof}

\begin{lemma}\label{f01}Let 
$f\in C^2(\R,\R)$.
Then
it holds for all $v_1,v_2,w_1,w_2\in\R $
 that
 \begin{multline}\label{eq:f01}
 |(f(v_1)-f(w_1)) - (f(v_2) -f(w_2))|\leq \left(\textstyle\sup_{x\in\R}|f'(x)|\right)|(v_1-w_1) -(v_2-w_2)|\\
 +\tfrac{1}{2}\left(\textstyle\sup_{x\in\R}|f''(x)|\right)\bigl[|v_1-w_1|+|v_2-w_2|\bigr]\min\{ | v_1 - v_2 |, | w_1 - w_2 | \}.
 \end{multline}
\end{lemma}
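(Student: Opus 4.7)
The plan is to rewrite $(f(v_1)-f(w_1))-(f(v_2)-f(w_2))$ as a one-dimensional integral via the fundamental theorem of calculus and then to split the resulting integrand in two complementary algebraic ways to produce two upper bounds—one featuring $|w_1-w_2|$ and the other featuring $|v_1-v_2|$—whose minimum is precisely the right-hand side of \eqref{eq:f01}.

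First I would assume without loss of generality that $\sup_{x\in\R}|f'(x)|$ and $\sup_{x\in\R}|f''(x)|$ are finite, since otherwise the right-hand side of \eqref{eq:f01} is infinite and the claim is trivial. Setting $c_t=v_2+t(v_1-v_2)$ and $d_t=w_2+t(w_1-w_2)$ for $t\in[0,1]$, differentiating $t\mapsto f(c_t)-f(d_t)$ and integrating from $0$ to $1$ would yield
\begin{equation*}
(f(v_1)-f(w_1))-(f(v_2)-f(w_2))=\int_0^1\!\bigl[f'(c_t)(v_1-v_2)-f'(d_t)(w_1-w_2)\bigr]\,dt.
\end{equation*}
Next I would rewrite the integrand in the two algebraically equivalent forms
\begin{equation*}
f'(c_t)[(v_1-v_2)-(w_1-w_2)]+[f'(c_t)-f'(d_t)](w_1-w_2)
\end{equation*}
and
\begin{equation*}
f'(d_t)[(v_1-v_2)-(w_1-w_2)]+[f'(c_t)-f'(d_t)](v_1-v_2).
\end{equation*}
In each case I would bound $|f'(c_t)|,|f'(d_t)|\le \sup_{x\in\R}|f'(x)|$, invoke the mean-value estimate $|f'(c_t)-f'(d_t)|\le\bigl(\sup_{x\in\R}|f''(x)|\bigr)|c_t-d_t|$, observe that $c_t-d_t=(1-t)(v_2-w_2)+t(v_1-w_1)$ so that $|c_t-d_t|\le(1-t)|v_2-w_2|+t|v_1-w_1|$, use the identity $(v_1-v_2)-(w_1-w_2)=(v_1-w_1)-(v_2-w_2)$, and evaluate $\int_0^1[(1-t)|v_2-w_2|+t|v_1-w_1|]\,dt=\tfrac12(|v_1-w_1|+|v_2-w_2|)$.

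These steps produce two upper bounds, each of the form $\bigl(\sup_{x\in\R}|f'(x)|\bigr)\,|(v_1-w_1)-(v_2-w_2)|+\tfrac12\bigl(\sup_{x\in\R}|f''(x)|\bigr)\,(|v_1-w_1|+|v_2-w_2|)\cdot\mathfrak{m}$, once with $\mathfrak{m}=|w_1-w_2|$ and once with $\mathfrak{m}=|v_1-v_2|$. Taking the minimum then gives \eqref{eq:f01}. I do not expect any serious obstacle in this argument; the only point worth highlighting is that \emph{two} distinct splittings of the integrand are needed, and the fact that the inequality holds for each of them individually is precisely what allows the minimum (rather than, say, the arithmetic mean $\tfrac12(|v_1-v_2|+|w_1-w_2|)$) to appear on the right-hand side of \eqref{eq:f01}.
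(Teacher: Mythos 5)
Your proposal is correct and follows essentially the same route as the paper: the fundamental theorem of calculus along the segments $\lambda\mapsto \lambda v_1+(1-\lambda)v_2$ and $\lambda\mapsto \lambda w_1+(1-\lambda)w_2$, a splitting of the integrand into a $f'$-term and a $f'$-difference term, and the evaluation $\int_0^1[\lambda|v_1-w_1|+(1-\lambda)|v_2-w_2|]\,d\lambda=\tfrac12(|v_1-w_1|+|v_2-w_2|)$. The only (cosmetic) difference is that you write out both complementary splittings explicitly to obtain the minimum, whereas the paper records only the splitting yielding the factor $|w_1-w_2|$ and leaves the symmetric case implicit.
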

\begin{proof}[Proof of \cref{f01}]  
Observe that
the fundamental theorem of calculus and the triangle inequality
 show that for all
$v_1,v_2,w_1,w_2\in\R $ it holds that
\begin{align}\label{c06c}\begin{split}
&\left|(f(v_1)-f(w_1)) - (f(v_2) -f(w_2))\right|=
\left|(f(v_1)-f(v_2)) - (f(w_1) -f(w_2))\right|
   \\
&
= \left| \int_{0}^{1} f'(\lambda v_1+(1-\lambda)v_2 )(v_1-v_2) -
 f'(\lambda w_1+(1-\lambda)w_2 )(w_1-w_2)\,d\lambda\right|   \\
&= \biggl|\int_{0}^{1} f'(\lambda v_1+(1-\lambda)v_2 )\bigl[(v_1-v_2)-(w_1-w_2) \bigr]d\lambda  \\
&\quad +\int_{0}^{1}\bigl[
f'(\lambda v_1+(1-\lambda)v_2 )-
f'(\lambda w_1+(1-\lambda)w_2 )\bigr](w_1-w_2)\,d\lambda \biggr|   \\
&\leq \left(\textstyle\sup_{x\in\R}|f'(x)|\right)|(v_1-w_1) -(v_2-w_2)|  \\
&\quad +\left(\textstyle\sup_{x\in\R}|f''(x)|\right)\left[\int_{0}^{1}
\bigl(\lambda|v_1-w_1|+(1-\lambda)|v_2-w_2|\bigr)\,d\lambda\right]|w_1-w_2|.\end{split} 
\end{align}%
This and the fact that $\int_{0}^{1}\lambda\,d\lambda=
\int_{0}^{1}(1-\lambda)\,d\lambda=\nicefrac{1}{2}
$ establish \eqref{eq:f01}.
The proof of \cref{f01} is thus complete.
\end{proof}

\begin{corollary}\label{cor:sec4}
Let  $T, \delta \in (0,\infty)$, 
 $  \Theta = \bigcup_{ n \in \N }\! \Z^n$,
$f\in C^2( \R,\R)$, 
let 
$g_d\in C^1( \R^d,\R)$, $d\in\N$, satisfy $\sup_{d\in\N}\sup_{x=(x_1,x_2,\ldots, x_d)\in\R^d}\bigl(|f(x_1)|+|f'(x_1)|+|f''(x_1)|+|g_d(x)|+
\sum_{i=1}^{d}| \tfrac{\partial g_d}{\partial x_i}(x)|^2\bigr)<\infty$,
let
$(\Omega, \mathcal{F}, \P, (\F_t)_{t\in[0,T]})$ 
be a filtered probability space,
let $\unif^\theta\colon \Omega\to[0,1]$, $\theta\in \Theta$, be i.i.d.\ random variables,
assume for all $t\in (0,1)$ that $\P(\unif^{0}\le t)=t$,
let $\stdNormal^{d,\theta}\colon  \Omega\to\R^d$, $\theta\in\Theta$, 
$d\in\N$,
be i.i.d.\ standard normal vectors,
let $W^d=(W^{d,1},W^{d,2},\ldots,W^{d,d})\colon [0,T]\times\Omega\to\R^d$, $d\in\N$, be  standard 
$(\F_t)_{t\in[0,T]}$-Brownian motions,
assume that 
$
(\unif^\theta)_{\theta\in\Theta}$, $
(\stdNormal^{d,\theta})_{(d,\theta)\in \N\times \Theta}$, and
$(W^d)_{d\in\N}$
 are independent,
let
$ 
  {{U}}_{ n,M}^{d,\theta } \colon [0, T] \times \R^d \times \Omega \to \R
$, 
$d,M,n\in\N_0$, $\theta\in\Theta$, satisfy
for all 
$d,M \in \N$, $n\in \N_0$, $\theta\in\Theta $, 
$ t \in [0,T]$, $x\in\R^d $
 that 
{\small
\begin{equation}
\begin{split}
&  {{U}}_{n,M}^{d,\theta}(t,x)
=(T-t)f(0)\1_{\N}(n)+
  \frac{\1_{\N}(n)}{M^n}
 \sum_{i=1}^{M^n} 
      {g}_d\bigl(x+[T-t]^{1/2}{\stdNormal}^{d,(\theta,0,-i)}\bigr)
 \\
& +
  \sum_{\ell=1}^{n-1}\Biggl[ \frac{(T-t)}{M^{n-\ell}}
    \sum_{i=1}^{M^{n-\ell}}
      \bigl(f\circ {{U}}_{\ell,M}^{d,(\theta,\ell,i)}-f\circ {{U}}_{\ell-1,M}^{d,(\theta,-\ell,i)}\bigr)
      \bigl(t+(T-t)\unif^{(\theta,\ell,i)},x+[(T-t)\unif^{(\theta,\ell,i)}]^{1/2}{\stdNormal}^{d,(\theta,\ell,i)}\bigr)
    \Biggr],
    \end{split}
\end{equation}}%
let $\lfloor \cdot \rfloor_M \colon \R \to \R$, $ M \in \N $, and 
$\lceil \cdot \rceil_M \colon  \R \to \R$, $ M \in \N $, 
satisfy for all $M \in \N$, $t \in [0,T]$ that
$\lfloor t \rfloor_M = \max( ([0,t]\backslash \{T\}) \cap \{ 0, \frac{ T }{ M }, \frac{ 2T }{ M }, \ldots \} )$
and 
$\lceil t \rceil_M = \min(((t,\infty) \cup \{T\})\cap  \{ 0, \frac{ T }{ M }, \frac{ 2T }{ M }, \ldots \} )$,
let $\Yappr^{d,n,M}
\colon [0,T]\times\Omega\to\R $,
 $d,n,M\in\N$, 
satisfy  for all $d,n,M\in\N$, $t\in[0,T]$  that
\begin{align}
\Yappr^{d,n,M}_{t}
&= \sum_{\ell=0}^{n-1}\biggl[
\left[ \tfrac{ \lceil t \rceil_{M^{l+1}} - t }{ ( T / M^{ l + 1 } ) } \right]
U^{d,\ell}_{n-\ell,M}(\lfloor t \rfloor_{M^{l+1}}, W_{\lfloor t \rfloor_{M^{l+1}}}^{d})+
\left[\tfrac{ t-\lfloor t \rfloor_{M^{l+1}} }{ ( T / M^{ l + 1 } ) }\right]
U^{d,\ell}_{n-\ell,M}(\lceil t \rceil_{M^{l+1}}, W_{\lceil t \rceil_{M^{l+1}}}^{d})
\nonumber
\\
&\quad-\1_{\N}(\ell)\Bigl(
\left[ \tfrac{ \lceil t \rceil_{M^{l}} - t }{ ( T / M^{ l  } ) } \right]U^{d,\ell}_{n-\ell,M}(\lfloor t \rfloor_{M^{l}}, W^{d}_{\lfloor t \rfloor_{M^{l}}})+
\left[ \tfrac{ t-\lceil t \rceil_{M^{l}} }{ ( T / M^{ l  } ) } \right]
U^{d,\ell}_{n-\ell,M}(\lceil t \rceil_{M^{l}}, W^{d}_{\lceil t \rceil_{M^{l}}})
\Bigr)\biggr],
\end{align}
let $\mathbf{Y}^d=(Y^d,Z^{d,1},Z^{2,d},\ldots,Z^{d,d})\colon [0,T]\times\Omega\to \R^{d+1}
$, $d\in \N$, 
be $(\F_{t})_{t\in[0,T]}$-predictable stochastic processes,
assume for all $d\in \N$ that
$
\int_0^T  \E \bigl[|Y_s^d|+\textstyle\sum_{j=1}^{d}|Z_s^{d,j}|^2\bigr]ds<\infty
$,
assume that for all $d\in \N$, $t\in[0,T]$ it holds $\P$-a.s. that
\begin{equation}
Y^d_t=g_d(W^d_T)+\int_t^T f(Y^d_s)\,ds-\sum_{j=1}^{d}\int_t^T Z_s^{d,j}\, dW_s^{d,j},
\end{equation}
and let
$\FEU{d,n,M} \in \N_0$, $d, n,M\in\Z$, and $\FEY{n,M}\in \N_0$, $d,n,M\in\Z$,
satisfy 
for all $d\in \N$, $n,M\in \N_0$ that 
\begin{equation}
\label{d01b}
 \FEU{d,n,M}\leq  (d+1) M^n \1_{\N}(n)+ \sum_{\ell=0}^{n-1}\left[M^{n-\ell}\left(2+d+\FEU{d,\ell,M}+\FEU{d,\ell-1,M}\1_{\N}(\ell)\right)\right],
 \end{equation}
 and
$
\FEY{d,n,M}\leq (d+1)(M^n+1)+\sum_{\ell=0}^{n-1} \left[(M^{\ell+1}+1)\FEU{d,n-\ell,M}\right]$.
Then
there exist 
$c\in \R$ and
$\sfN\colon \N \times (0,1]\to \N$
such that
 for all $d\in\N$, $\varepsilon\in  (0,1]$ it holds that
$\sup_{t\in [0,T]}
(\E[
|
\Yappr^{d,\sfN(d,\epsilon),\sfN(d,\epsilon)}_t-Y_t^d|^2])^{1/2}
\leq\epsilon
$ and
$
\FEY{d,\sfN(d,\epsilon),\sfN(d,\epsilon)}
\leq c
d^{c}\epsilon^{-(2+\delta)}.
$
\end{corollary}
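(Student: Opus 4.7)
The plan is to apply \cref{t26} separately for each dimension $d\in\N$ and then to verify that the resulting complexity bounds depend polynomially on $d$. Let $\kappa\in[1,\infty)$ denote the finite supremum appearing in the hypothesis on $f$ and the $g_d$, so that $|f(v)|,|f'(v)|,|f''(v)|\leq\kappa$ for all $v\in\R$ and $|g_d(x)|,\sum_{i=1}^d |\tfrac{\partial g_d}{\partial x_i}(x)|^2\leq\kappa$ uniformly in $d$ and $x$. For each fixed $d$, I would apply \cref{t26} with $L=\kappa$, $\rho=0$, $\beta=\tfrac{1}{12}$, $\alpha=d+2$, the Euclidean norm for $\|\cdot\|$, $V\equiv C$ for a constant $C=C(\kappa,T)\in[1,\infty)$ chosen so that
\begin{equation*}
C\geq\max\!\left\{(T\kappa)^{1/\beta},\,\kappa^{1/\beta}\right\}\qquad\text{and}\qquad (2C)^\beta\geq\max\!\left\{T^{1/2}\sqrt{\kappa},\,T\kappa/2\right\}\!,
\end{equation*}
the Brownian motion $W=W^d$, the Gaussians $(\stdNormal^\theta)_{\theta\in\Theta}=(\stdNormal^{d,\theta})_{\theta\in\Theta}$, and the choice $\thetaBar_\ell=\ell\in\Z\subseteq\Theta$ in the definition of $\Yappr^{d,n,M}$.

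The next step would be to verify the hypotheses of \cref{m01}. Since $f$ depends only on the $v$-variable, condition \eqref{t02b} reduces to $|f(v_1)-f(v_2)|\leq\kappa|v_1-v_2|$ together with the estimate $T^{-1}|g_d(x)-g_d(y)|\leq T^{-1}\sqrt{\kappa}\|x-y\|\leq T^{-3/2}(2C)^\beta\|x-y\|$, where the first inequality follows from the mean value theorem and the uniform bound $\|\nabla g_d\|_{\ell^2}\leq\sqrt{\kappa}$, while the second uses the choice of $C$. For \eqref{m11b}, the LHS simplifies to $|[f(v_1)-f(w_1)]-[f(v_2)-f(w_2)]|$, and \cref{f01} (bounding the $\min$ by $|w_1-w_2|$) gives $\kappa|(v_1-w_1)-(v_2-w_2)|+\tfrac{\kappa}{2}(|v_1-w_1|+|v_2-w_2|)|w_1-w_2|$; the first summand matches the $L|\cdot|$ term on the RHS and the second is absorbed by the third term on the RHS since $T^{-1}(2C)^\beta\geq\kappa/2$. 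The growth condition $\max\{|Tf(0)|^{1/\beta},|g_d(x)|^{1/\beta},\E[V(X_{s,t}^0(x))]\}\leq V(x)$ becomes, with $V\equiv C$ and $\rho=0$, the requirement that $C\geq(T\kappa)^{1/\beta}$ and $C\geq\kappa^{1/\beta}$, which hold by construction. Finally, the cost inequalities \eqref{d01b} imply the recursion \eqref{d01} of \cref{t26} with $\alpha=d+2$.

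Item \eqref{t28b} of \cref{t26} then yields, for each $d$, a function $\sfN_d\colon(0,\infty)\to\N$ satisfying $\sup_{t\in[0,T]}\lp{\Yappr^{d,\sfN_d(\varepsilon),\sfN_d(\varepsilon)}_t-Y^d_t}_2\leq\varepsilon$ and
\begin{equation*}
\FEY{d,\sfN_d(\varepsilon),\sfN_d(\varepsilon)}\leq(d+2)\,K\,\bigl[C\max\!\left\{\E\!\left[\|\stdNormal^{d,0}\|^4\right],1\right\}\bigr]^{(2+\delta)/4}\varepsilon^{-(2+\delta)},
\end{equation*}
with $K=K(\delta,\kappa,T)\in(0,\infty)$ independent of $d$. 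Since $\stdNormal^{d,0}$ is a $d$-dimensional standard normal vector, $\E[\|\stdNormal^{d,0}\|^4]=d(d+2)\leq(d+1)^2$, so the bracketed quantity to the power $(2+\delta)/4$ is at most $C^{(2+\delta)/4}(d+1)^{(2+\delta)/2}$. Together with the prefactor $d+2$, this yields a bound of the form $c\,d^c\,\varepsilon^{-(2+\delta)}$ for a finite $c=c(\delta,\kappa,T)$, and setting $\sfN(d,\varepsilon)=\sfN_d(\varepsilon)$ completes the argument.

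The principal obstacle will be the constant-bookkeeping needed to ensure that $C$ (and hence $V(0)$) can be chosen independently of $d$; this relies crucially on the fact that the hypothesis on $|g_d|$ and $\sum_i|\tfrac{\partial g_d}{\partial x_i}|^2$ is \emph{uniform} in $d$. A secondary delicate point is the precise application of \cref{f01} to \eqref{m11b}: one must select $|w_1-w_2|$ as the value of the $\min$ in \cref{f01} in order to match the factor $|w_1-w_2|$ appearing on the RHS of \eqref{m11b}.
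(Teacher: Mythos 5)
Your proposal is correct and follows essentially the same route as the paper's own (much terser) proof: verify the hypotheses of \cref{m01} using \cref{f01} and the uniform bounds on $f$, $f'$, $f''$, $g_d$, and $\sum_i|\tfrac{\partial g_d}{\partial x_i}|^2$, then apply \cref{t26} for each fixed $d$ and track the polynomial dimension dependence through $\alpha$ and $\E[\|\stdNormal^{d,0}\|^4]=d(d+2)$. The only cosmetic point is that item \eqref{t28b} of \cref{t26} is stated for $\delta\in(0,1]$, so for $\delta>1$ you should apply it with $\min\{\delta,1\}$ and use $\epsilon^{-(2+\min\{\delta,1\})}\leq\epsilon^{-(2+\delta)}$ for $\epsilon\in(0,1]$.
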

\begin{proof}[Proof of \cref{cor:sec4}]
Note that \cref{f01} ensures that for 
 $v_1,v_2,w_1,w_2\in\R $ it holds 
 that
 \begin{multline}
 |(f(v_1)-f(w_1)) - (f(v_2) -f(w_2))|\leq \left(\textstyle\sup_{x\in\R}|f'(x)|\right)|(v_1-w_1) -(v_2-w_2)|\\
 +\tfrac{1}{2}\left(\textstyle\sup_{x\in\R}|f''(x)|\right)\bigl[|v_1-w_1|+|v_2-w_2|\bigr]\min\{ | v_1 - v_2 |, | w_1 - w_2 | \}.
 \end{multline}
 This and \cref{t26} prove that there exist 
$c\in \R$ and
$\sfN\colon \N \times (0,1]\to \N$
such that
 for all $d\in\N$, $\varepsilon\in  (0,1]$ it holds that
$\sup_{t\in [0,T]}
(\E[
|
\Yappr^{d,\sfN(d,\epsilon),\sfN(d,\epsilon)}_t-Y_t^d|^2])^{1/2}
\leq\epsilon
$ and
$
\FEY{d,\sfN(d,\epsilon),\sfN(d,\epsilon)}
\leq c
d^{c}\epsilon^{-(2+\delta)}.
$
The proof of \cref{cor:sec4} is thus complete.
\end{proof}

\subsubsection*{Acknowledgements}
We thank Benno Kuckuck for several helpful discussions.
This work has been funded by the Deutsche Forschungsgemeinschaft (DFG, German Research Foundation) under Germany’s Excellence Strategy EXC 2044-390685587, Mathematics M\"unster:  Dynamics-Geometry-Structure and through the research grant HU1889/6-2.

{
\bibliographystyle{acm}
\bibliography{bibfile}
}
\end{document}